\title{ A solution of the problem of standard compact Clifford-Klein forms}
\author{Maciej Boche\'nski and Aleksy Tralle}
\begin{document}

\newtheorem{theorem}{Theorem}
\newtheorem{proposition}{Proposition}
\newtheorem{lemma}{Lemma}
\newtheorem{definition}{Definition}
\newtheorem{example}{Example}
\newtheorem{note}{Note}
\newtheorem{corollary}{Corollary}
\newtheorem{remark}{Remark}
\newtheorem{question}{Question}

\maketitle{}
\abstract{We solve the problem of classification of standard compact Clifford-Klein forms of homogeneous spaces of simple non-compact real Lie groups under the following assumptions:
\begin{enumerate}
\item    either standard compact Clifford-Klein forms are given by triples$(G,H,L),$ with $G,H,L$ linear connected which arise  from triples $(\mathfrak{g},\mathfrak{h},\mathfrak{l})$ of real Lie algebras with $\mathfrak{g}$ absolutely simple, and at least one of the complexified subalgebras  $\mathfrak{h}^c\subset\mathfrak{g}^c$ or $,\mathfrak{l}^c\subset\mathfrak{g}^c$, is regular, or
\item $\mathfrak{g},\mathfrak{h},\mathfrak{l}$ are  absolutely simple.
\end{enumerate}
The solution is that such triples yield standard compact Clifford-Klein forms if and only if $\mathfrak{g}=\mathfrak{h}+\mathfrak{l}$ and $\mathfrak{h}\cap\mathfrak{l}$ is compact.}

\vskip10pt
\noindent {\sl Key words and phrases:} proper action, homogeneous space, semisimple Lie algebra, Clifford-Klein form, Lie algebra decomposition.
\vskip10pt
\noindent {\sl 2020 Mathematics Subject Classification:}  17B05, 17B10, 53C30, 57S25, 57S30.
\vskip10pt

\section{Introduction}
\subsection{The problem and motivation}
 Let $G$ be a connected real semisimple linear Lie group and $H\subset G$ a closed connected reductive subgroup.
\begin{definition}[T. Kobayashi, \cite{k-k},\cite{kob}]\label{def:standard}
{\rm A homogeneous space $G/H$ admits a {\it standard compact Clifford-Klein form}, if there exists a  discrete subgroup $\Gamma\subset G$ and closed reductive subgroup $L\subset G$ containing $\Gamma$ as a co-compact lattice such that $\Gamma$ (and thus  $L$) acts properly and co-compactly on $G/H.$ }
\end{definition}
\noindent This notion was first introduced in \cite{kob}. A list of such spaces was given in \cite{kob1} and \cite{kob-y}. The term "standard" was first used in \cite{k-k}.
 Standard Clifford-Klein forms are determined by triples $(G,H,L)$, where $H$ and $L$ are reductive subgroups. The general theory \cite{kob} shows that we can restrict ourselves to considering semisimple Lie subgroups $H,L$ (see Section \ref{subs21} for details), and we will do that throughout this article. The following is well known.
\begin{theorem}\label{thm:table}
Assume that $\mathfrak{g}$ is absolutely simple, and $\mathfrak{h}$ and $\mathfrak{l}$ are reductive. All triples $(\mathfrak{g},\mathfrak{h},\mathfrak{l})$  listed in Table 1 yield standard compact Clifford-Klein forms. 
\end{theorem}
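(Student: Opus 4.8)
The plan is to verify, row by row, that each triple $(\mathfrak{g},\mathfrak{h},\mathfrak{l})$ of Table 1 satisfies the two defining requirements of Definition \ref{def:standard} once passed to the corresponding connected linear groups $(G,H,L)$. Since by Borel's theorem every connected semisimple Lie group $L$ contains a cocompact lattice $\Gamma$, the discrete group $\Gamma$ in Definition \ref{def:standard} exists automatically as soon as $L$ is fixed and $L$ itself acts properly and cocompactly on $G/H$. Hence the whole statement reduces to checking, for each entry, that the $L$-action on $G/H$ is \emph{proper} and \emph{cocompact}; both $\Gamma$ and $L$ then act properly, and $\Gamma$ acts cocompactly because $L$ does and $\Gamma\backslash L$ is compact.

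For properness I would invoke Kobayashi's criterion \cite{kob}. Fix a Cartan decomposition $\mathfrak{g}=\mathfrak{k}\oplus\mathfrak{p}$, a maximal abelian $\mathfrak{a}\subset\mathfrak{p}$ with restricted Weyl group $W$, and the associated Cartan projection $\mu\colon G\to\mathfrak{a}^{+}$. After conjugating, realise the maximal split abelian subspaces $\mathfrak{a}_{\mathfrak{h}}$ and $\mathfrak{a}_{\mathfrak{l}}$ of $\mathfrak{h}$ and $\mathfrak{l}$ inside $\mathfrak{a}$. Then $L$ acts properly on $G/H$ precisely when the pair is transverse, i.e. $\mu(H)\cap\mu(L)$ is bounded, which for reductive subgroups is the infinitesimal condition
\[
\mathfrak{a}_{\mathfrak{h}}\cap w\cdot\mathfrak{a}_{\mathfrak{l}}=\{0\}\qquad\text{for all }w\in W.
\]
For each table entry I would write down the embeddings $\mathfrak{h},\mathfrak{l}\hookrightarrow\mathfrak{g}$, locate their split tori relative to the ambient $\mathfrak{a}$, and check this intersection condition chamber by chamber; a recurring shortcut is that in many rows one of the two subalgebras contributes split directions that are compact (or absent) in the ambient group, so that no common Weyl-chamber direction survives.

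For cocompactness I would use the dimension (cohomological) criterion: writing $d(\mathfrak{g})=\dim\mathfrak{g}-\dim\mathfrak{k}$ for the dimension of the Riemannian symmetric space of $G$, and similarly $d(\mathfrak{h}),d(\mathfrak{l})$, a proper $L$-action on $G/H$ is cocompact if and only if $d(\mathfrak{g})=d(\mathfrak{h})+d(\mathfrak{l})$. This collapses to a purely arithmetic comparison of the (tabulated) dimensions of the groups and their maximal compact subgroups, and so is mechanical for every row. I expect the main obstacle to lie not in this count but in the properness verification for the exceptional and low-rank coincidence entries, where $\mathfrak{a}_{\mathfrak{h}}$ and $\mathfrak{a}_{\mathfrak{l}}$ need not sit in manifestly complementary positions and the Weyl-orbit condition must be examined explicitly; organising these cases uniformly, rather than the underlying representation theory, will be the delicate part.
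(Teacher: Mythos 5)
Your proposal is correct in outline, but it is a genuinely different (and much more laborious) route than the one the paper relies on. The paper treats Theorem \ref{thm:table} as known, and the argument behind it is structural rather than case-by-case: every row of Table \ref{tttab} is one of Onishchik's decompositions $\mathfrak{g}=\mathfrak{h}+\mathfrak{l}$ with $\mathfrak{h}\cap\mathfrak{l}$ compact (\cite{on}, Table 2), and by Onishchik's Theorem 3.1 such a Lie algebra decomposition globalizes to $G=HL$. Hence $L$ acts \emph{transitively} on $G/H$, so $G/H\cong L/(L\cap H)$ with compact isotropy $L\cap H$; properness of the $L$-action is then automatic, cocompactness is trivial, and a cocompact lattice $\Gamma\subset L$ (Borel, as in your first step) finishes the proof of Definition \ref{def:standard} uniformly for all rows, with no computation and no knowledge of the embeddings beyond the decomposition itself. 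Your route instead verifies Kobayashi's two exact criteria row by row: properness via $\mathfrak{a}_{\mathfrak{h}}\cap w\cdot\mathfrak{a}_{\mathfrak{l}}=\{0\}$ for all $w\in W$, and cocompactness via $d(\mathfrak{g})=d(\mathfrak{h})+d(\mathfrak{l})$ (Theorem \ref{koba}). Since both criteria are if-and-only-if statements, this verification would indeed succeed for every entry, so there is no logical gap; but note that the properness check is exactly where the paper says the difficulty of the general problem lies, because it requires locating $\mathfrak{a}_{\mathfrak{h}}$ and $\mathfrak{a}_{\mathfrak{l}}$ inside $\mathfrak{a}$ for nontrivial embeddings (the spin embeddings in the $\mathfrak{so}(8,8)$ and $\mathfrak{so}(4,4)$ rows, and $\mathfrak{g}_{2(2)}\subset\mathfrak{so}(3,4)$), whereas the decomposition argument gets properness for free from compactness of $L\cap H$. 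Two small repairs to your write-up: several table entries have $L$ reductive with a compact factor (e.g. $\mathfrak{u}(1,2n)$, $\mathfrak{sp}(1,n)\times\mathfrak{sp}(1)$), and Borel's theorem applies to the semisimple part, so you should take $\Gamma$ in the noncompact semisimple factor and observe it remains cocompact in $L$; and the phrase ``$\mu(H)\cap\mu(L)$ is bounded'' should be read as the statement that these two cones meet only at $0$, which for cones is equivalent to boundedness of the intersection and is the form in which Kobayashi's criterion is applied.
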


\begin{center}
 \begin{table}[h]
 \centering
 {\footnotesize
 \begin{tabular}{| c | c | c |}
   \hline
   \multicolumn{3}{|c|}{ \textbf{\textit{Standard Clifford-Klein forms}}} \\
   \hline                        
   $\mathfrak{g}$ & $\mathfrak{h}$ & $\mathfrak{l}$ \\
   \hline
		$\mathfrak{su}(2,2n)$ & $\mathfrak{sp}(1,n)$ & $\mathfrak{su}(1,2n)$ \\
		\hline
		$\mathfrak{su}(2,2n)$ & $\mathfrak{sp}(1,n)$ & $\mathfrak{u}(1,2n)$ \\
		\hline
		$\mathfrak{so}(2,2n)$ & $\mathfrak{so}(1,2n)$ & $\mathfrak{su}(1,n)$ \\
		\hline
		$\mathfrak{so}(2,2n)$ & $\mathfrak{so}(1,2n)$ & $\mathfrak{u}(1,n)$ \\
		\hline
		$\mathfrak{so}(4,4n)$ & $\mathfrak{so}(3,4n)$ & $\mathfrak{sp}(1,n)$ \\
		\hline
		$\mathfrak{so}(4,4n)$ & $\mathfrak{so}(3,4n)$ & $\mathfrak{sp}(1,n)\times\mathfrak{sp}(1)$ \\
		\hline
		$\mathfrak{so}(4,4n)$ & $\mathfrak{so}(3,4n)$ & $\mathfrak{sp}(1,n)\times\mathfrak{so}(2)$ \\
		\hline
		$\mathfrak{so}(4,4n)$ & $\mathfrak{so}(3,4n)$ & $\mathfrak{sp}(1,n)$ \\
		\hline
		$\mathfrak{so}(3,4)$ & $\mathfrak{g}_{2(2)}$ & $\mathfrak{so}(1,4)\times\mathfrak{so}(2)$ \\
		\hline
		$\mathfrak{so}(3,4)$ & $\mathfrak{g}_{2(2)}$ & $\mathfrak{so}(1,4)$ \\
		\hline
		$\mathfrak{so}(8,8)$ & $\mathfrak{so}(7,8)$ & $\mathfrak{so}(1,8)$ \\
		\hline
		$\mathfrak{so}(4,4)$ & $\mathfrak{so}(3,4)$ & $\mathfrak{so}(1,4)\times\mathfrak{so}(3)$ \\
		\hline
		$\mathfrak{so}(4,4)$ & $\mathfrak{so}(3,4)$ & $\mathfrak{so}(1,4)\times\mathfrak{so}(2)$ \\
		\hline
		$\mathfrak{so}(4,4)$ & $\mathfrak{so}(3,4)$ & $\mathfrak{so}(1,4)$ \\
		\hline
 \end{tabular}
 }
 \caption{
 }
 \label{tttab}
 \end{table}
\end{center}

It is important to note that these are {\it the only} known examples of homogeneous spaces admitting compact Clifford-Klein forms (with $\mathfrak{g}$ simple and $\mathfrak{h}$ proper and non-compact). All Lie algebra triples in Table \ref{tttab} come from \cite{on}, Table 2. In this work, Onishchik classifies all triples $(\mathfrak{g},\mathfrak{h},\mathfrak{l})$ such that $\mathfrak{g}$ is simple, $\mathfrak{h},\mathfrak{l}$ are reductive and $\mathfrak{g}=\mathfrak{h}+\mathfrak{l}$. Such triples are called Lie algebra decompositions. We omit the complete description of the embeddings $\mathfrak{h}\hookrightarrow\mathfrak{g}$, referring to \cite{on}. Foe example, in the last four lines one of the subalgebras is embedded via an appropriate spin representation (see \cite{on}, the proof of Theorem 4.1). Also, the last three lines in the table are already contained in the table as special cases with $n=1$. However, we prefer to leave it in the form given by Table 2 in \cite{on}, for the convenience of the reader. Let us also note that according to the general theory of compact Clifford-Klein forms \cite{kob},  it is allowed to interchange the roles of $\mathfrak{h}$ and $\mathfrak{l}$, and we will do that throughout this article.

In this paper we prove that under some mild restrictions Table \ref{tttab} exhausts all the possibilities for compact standard Clifford-Klein forms. The main result is as follows.

\begin{theorem}\label{thm:class} Assume that $\mathfrak{g}$ is absolutely simple, and $\mathfrak{h},\mathfrak{l}$ satisfy the following:
\begin{enumerate}
\item either the complexified embedding $\mathfrak{h}^c\hookrightarrow\mathfrak{g}^c$ is regular, or 
\item both $\mathfrak{h}$ and $\mathfrak{l}$   are proper absolutely simple subalgebras of non-compact type in $\mathfrak{g}$.
\end{enumerate}
Then  only Lie algebra triples $(\mathfrak{g},\mathfrak{h},\mathfrak{l})$ which yield compact standard Clifford-Klein forms are Lie algebra decompositions, that is $\mathfrak{g}=\mathfrak{h}+\mathfrak{l}$.
\end{theorem}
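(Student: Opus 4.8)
The plan is to reduce the existence of a standard compact Clifford-Klein form to two numerical constraints coming from Kobayashi's theory, recast the desired conclusion as a decomposition statement about maximal compact subalgebras, and then settle it by a structural classification governed by the two hypotheses. First I would invoke the two criteria for the reductive triple $(\mathfrak{g},\mathfrak{h},\mathfrak{l})$. Properness of the $L$-action on $G/H$ is equivalent to the transversality of Cartan projections, $\mathrm{Ad}(w)\mathfrak{a}_{\mathfrak{h}}\cap\mathfrak{a}_{\mathfrak{l}}=\{0\}$ for every $w$ in the Weyl group $W(\mathfrak{g},\mathfrak{a})$, where $\mathfrak{a}_{\mathfrak{h}},\mathfrak{a}_{\mathfrak{l}}\subset\mathfrak{a}$ are the maximal split parts; taking $w=e$ already gives $\mathrm{rk}_{\mathbb{R}}\,\mathfrak{h}+\mathrm{rk}_{\mathbb{R}}\,\mathfrak{l}\le\mathrm{rk}_{\mathbb{R}}\,\mathfrak{g}$, and properness forces the stabilizer $\mathfrak{h}\cap\mathfrak{l}$ to be a compact subalgebra. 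Cocompactness, given properness, is equivalent to equality of noncompact dimensions $d(\mathfrak{g})=d(\mathfrak{h})+d(\mathfrak{l})$, where $d(\mathfrak{s})=\dim\mathfrak{s}-\dim\mathfrak{k}_{\mathfrak{s}}$ is the dimension of the Riemannian symmetric space attached to $\mathfrak{s}$.

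Next I would reformulate the target equality $\mathfrak{g}=\mathfrak{h}+\mathfrak{l}$ at the level of maximal compacts. Writing $k_{\mathfrak{s}}=\dim\mathfrak{k}_{\mathfrak{s}}$ and substituting $\dim\mathfrak{s}=d(\mathfrak{s})+k_{\mathfrak{s}}$ into $\dim\mathfrak{g}-\dim(\mathfrak{h}+\mathfrak{l})=\dim\mathfrak{g}-\dim\mathfrak{h}-\dim\mathfrak{l}+\dim(\mathfrak{h}\cap\mathfrak{l})$, the $d$-additivity identity collapses the noncompact terms and yields
\[
\dim\mathfrak{g}-\dim(\mathfrak{h}+\mathfrak{l})=k_{\mathfrak{g}}+\dim(\mathfrak{h}\cap\mathfrak{l})-k_{\mathfrak{h}}-k_{\mathfrak{l}}\ \ge\ 0 .
\]
Hence $\mathfrak{g}=\mathfrak{h}+\mathfrak{l}$ holds if and only if $k_{\mathfrak{h}}+k_{\mathfrak{l}}=k_{\mathfrak{g}}+\dim(\mathfrak{h}\cap\mathfrak{l})$. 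Since every compact subalgebra embeds, up to conjugacy, into a maximal compact one, we have $\dim(\mathfrak{h}\cap\mathfrak{l})\le\min(k_{\mathfrak{h}},k_{\mathfrak{l}})$, and the whole problem becomes that of showing the maximal compact subalgebras themselves decompose, namely $\mathfrak{k}_{\mathfrak{g}}=\mathfrak{k}_{\mathfrak{h}}+\mathfrak{k}_{\mathfrak{l}}$ with intersection $\mathfrak{h}\cap\mathfrak{l}$. This is a compact decomposition, for which Onishchik's classification is available, and real decompositions are in any case real forms of entries in his complex table.

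The classification then splits according to the hypotheses. In case (1), where $\mathfrak{h}^{c}\hookrightarrow\mathfrak{g}^{c}$ is regular, I would use the Borel--de Siebenthal description of regular subalgebras through subsystems of the root system (deletions of nodes from the extended Dynkin diagram, iterated with Levi reductions). This produces a finite list of admissible $\mathfrak{h}$ with explicitly computable $\mathrm{rk}_{\mathbb{R}}\,\mathfrak{h}$, $d(\mathfrak{h})$ and $k_{\mathfrak{h}}$; the constraints $\mathrm{rk}_{\mathbb{R}}\,\mathfrak{h}+\mathrm{rk}_{\mathbb{R}}\,\mathfrak{l}\le\mathrm{rk}_{\mathbb{R}}\,\mathfrak{g}$ and $d(\mathfrak{g})=d(\mathfrak{h})+d(\mathfrak{l})$ then pin down $\mathfrak{l}$ very tightly, and each surviving pair is matched against Onishchik's decompositions, confirming $\mathfrak{g}=\mathfrak{h}+\mathfrak{l}$. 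In case (2), with $\mathfrak{h},\mathfrak{l}$ absolutely simple of noncompact type, I would feed the same constraints into Dynkin's classification of simple subalgebras of a simple Lie algebra; because $d$ grows rapidly with the embedding index, $d$-additivity is extremely rigid and leaves only finitely many candidates, which again coincide with decompositions.

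The hardest step I expect is case (2): without the combinatorial control that regularity supplies in case (1), one must bound all simple embeddings $\mathfrak{h}\hookrightarrow\mathfrak{g}$, track how each embedding places $\mathfrak{a}_{\mathfrak{h}}$ inside $\mathfrak{a}$ (needed to verify the full Weyl-transversality, not merely the rank inequality), and exclude triples that satisfy rank additivity yet fail to decompose. Ruling these out is exactly where the compact-part identity $\mathfrak{k}_{\mathfrak{g}}=\mathfrak{k}_{\mathfrak{h}}+\mathfrak{k}_{\mathfrak{l}}$ and the comparison with Onishchik's list must carry the argument; a secondary technical point is to justify the cocompactness criterion in the precise $d$-additivity form used above.
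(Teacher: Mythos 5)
Your proposal contains a genuine gap, and it sits exactly where the paper had to build new machinery. The reduction of $\mathfrak{g}=\mathfrak{h}+\mathfrak{l}$ to the compact-part identity $k_{\mathfrak{h}}+k_{\mathfrak{l}}=k_{\mathfrak{g}}+\dim(\mathfrak{h}\cap\mathfrak{l})$ is algebraically correct, but it is an equivalent restatement of the conclusion, not a tool: Onishchik's tables classify triples that \emph{do} decompose, so ``matching surviving pairs against Onishchik's list'' cannot certify that every triple passing your numerical sieve \emph{must} decompose. And that sieve (rank subadditivity from Cartan-projection transversality, plus $d$-additivity from Theorem \ref{koba}) is provably too coarse: the triple $(\mathfrak{so}(6,7),\mathfrak{so}(4,7),\mathfrak{g}_{2}(\mathbb{C}))$ satisfies $d(\mathfrak{g})=42=28+14=d(\mathfrak{h})+d(\mathfrak{l})$ and real-rank additivity $6=4+2$, yet it is not a Lie algebra decomposition; the paper can only discard it because $\mathfrak{so}(6,7)$ is split, via Corollary \ref{cor:split}, which rests on the structural Theorem \ref{thm:root-decomp} and not on dimension counting. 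Checking your full Weyl-group transversality for such a candidate would require knowing how $\mathfrak{a}_{\mathfrak{h}}$ and $\mathfrak{a}_{\mathfrak{l}}$ sit inside $\mathfrak{a}$ --- embedding data that a Borel--de Siebenthal or Dynkin enumeration does not supply, and whose general unavailability the paper explicitly identifies as the central difficulty. Worse, in case (1) only $\mathfrak{h}^{c}$ is assumed regular, while $\mathfrak{l}$ is an arbitrary reductive subalgebra (not regular, not simple), so there is no finite list of $\mathfrak{l}$'s with embedding data to sieve in the first place; the claim that the constraints ``pin down $\mathfrak{l}$ very tightly'' is unsubstantiated.

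The paper's route is different in kind, not merely in bookkeeping. A standard triple is first conjugated into well-embedded position, $\mathfrak{n}=\mathfrak{n}_{h}\oplus\mathfrak{n}_{l}$, $\mathfrak{a}=\mathfrak{a}_{h}\oplus\mathfrak{a}_{l}$ (Lemma \ref{lemma:nh+nl}, Proposition \ref{prop:good}); assuming $\mathfrak{g}\neq\mathfrak{h}+\mathfrak{l}$ one gets $X\in\mathfrak{t}$ orthogonal to $\mathfrak{h}+\mathfrak{l}$ and the root-space constraints of Theorem \ref{thm:root-decomp}. In the regular case no enumeration occurs at all: regularity of $\mathfrak{h}^{c}$ produces $Y\in\mathfrak{z}_{\mathfrak{g}}(\mathfrak{h})$ with $Y\notin\mathfrak{m}_{0}$ and $[Y,iX]=0$ (Proposition \ref{prop:centralizer}), and Lemma \ref{lemz} then forces $[Y,\mathfrak{a}]=0$, a contradiction --- this is Theorem \ref{thm:centr}, and it is what actually excludes non-decomposing triples like the one above. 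Only when both $\mathfrak{h}^{c}$ and $\mathfrak{l}^{c}$ are simple and non-regular does the paper argue numerically, and there it proves the \emph{strict} inequality $d(\mathfrak{g})>d(\mathfrak{h})+d(\mathfrak{l})$ (Proposition \ref{prop:inequality}), so that no candidate survives at all --- not ``finitely many candidates which coincide with decompositions''; this requires the rank bound of Proposition \ref{prop:rank}, which your outline lacks, and the exceptional chains through $B_{p-1}\subset D_{p}$ are eliminated separately using Theorem \ref{thm:kob-d} and Tojo's classification of the symmetric case. So your case (2) has the right spirit (Dynkin classification plus dimension counting) but is missing these ingredients, and your case (1), as proposed, cannot be completed.
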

Note that if $\mathfrak{g}$ is absolutely simple exceptional then, by \cite{bjt}, for any proper reductive subalgebra $\mathfrak{h}\subset \mathfrak{g}$ of non-compact type the corresponding homogeneous space does not admit compact standard Clifford-Klein forms. Therefore, in the proof of Theorem \ref{thm:class} we may assume that $\mathfrak{g}$ is classical simple.

It is interesting  that the proof of Theorem \ref{thm:class} yields a characterization of proper co-compact actions of reductive Lie groups on homogeneous spaces $G/H$ determined by simple absolutely simple Lie groups $G$ and (non-compact) absolutely simple subgroups (we thank T. N. Venkataramana for this observation).
\begin{theorem}\label{thm:proper-Lie} Let $G/H$ be a homogeneous space determined by an absolutely simple Lie groups $G,H,$ $H\subset G$. An absolutely simple Lie subgroup $L\subset G$ acts on $G/H$ properly and co-compactly, if and only if $G=H\cdot L$ and $H\cap L$ is compact.
\end{theorem}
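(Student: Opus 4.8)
The plan is to prove the two implications separately, after reducing the statement to one about the action of the whole group $L$. By Kobayashi's properness theory \cite{kob}, for a cocompact lattice $\Gamma\subset L$ the $\Gamma$-action on $G/H$ is proper (resp. cocompact) if and only if the $L$-action is, and such lattices exist because $L$ is semisimple; hence the triple $(G,H,L)$ yields a standard compact Clifford--Klein form in the sense of Definition \ref{def:standard} exactly when $L$ itself acts properly and cocompactly on $G/H$. This is what lets me feed the situation into Theorem \ref{thm:class}.

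For sufficiency, assume $G=H\cdot L$ and $H\cap L$ is compact. Taking inverses gives $G=G^{-1}=(HL)^{-1}=LH$, so the $L$-orbit of the base point is $\pi(LH)=\pi(G)=G/H$, where $\pi:G\to G/H$ is the projection; thus $L$ acts transitively, and in particular cocompactly. Under the resulting identification $G/H\cong L/(L\cap H)$ the isotropy subgroup is $L\cap H$, and a transitive action of $L$ on $L/(L\cap H)$ is proper precisely when the isotropy group $L\cap L$ is compact. This yields properness (here simplicity of $L$ is not needed).

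For necessity, assume $L$ acts properly and cocompactly. Properness makes every isotropy group compact, and the isotropy at the base point is $L\cap H$, so $H\cap L$ is compact. To obtain $G=H\cdot L$ I first prove the infinitesimal decomposition $\mathfrak g=\mathfrak h+\mathfrak l$. We may assume $\mathfrak h$ and $\mathfrak l$ are proper and of non-compact type: if, say, $L$ were compact then properness would be automatic but cocompactness would force $G/H$ to be compact, which is impossible for a proper absolutely simple $H\subset G$ (a $\theta$-stable proper simple $\mathfrak h$ has $\dim(\mathfrak h\cap\mathfrak p)<\dim\mathfrak p$), and symmetrically for $H$. Then $(\mathfrak g,\mathfrak h,\mathfrak l)$ meets hypothesis (2) of Theorem \ref{thm:class}; since proper cocompactness of the $L$-action is equivalent to $(G,H,L)$ yielding a standard compact form, Theorem \ref{thm:class} gives $\mathfrak g=\mathfrak h+\mathfrak l$. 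Finally I globalize: $\mathfrak g=\mathfrak h+\mathfrak l$ means the differential at $e$ of the orbit map $L\to G/H$ is onto, so the orbit $L\cdot eH$ is open; orbits of a proper action are closed, and $G/H$ is connected, so this open-and-closed orbit is all of $G/H$. Hence $L$ acts transitively, $G=LH=HL$, which is $G=H\cdot L$.

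The only genuinely hard input is the equality $\mathfrak g=\mathfrak h+\mathfrak l$, i.e. Theorem \ref{thm:class}, and this is where I expect the main obstacle to sit in any self-contained treatment. The natural direct route goes through the non-compact dimensions $d(\cdot)=\dim(\cdot)-\dim(\text{maximal compact})$: Kobayashi's criterion forces $d(H)+d(L)\le d(G)$ from properness and the reverse inequality from cocompactness, so $d(G)=d(H)+d(L)$. Fixing a Cartan involution $\theta$ that stabilizes both $\mathfrak h$ and $\mathfrak l$ and writing $\mathfrak g=\mathfrak k\oplus\mathfrak p$, compactness of $\mathfrak h\cap\mathfrak l$ gives $(\mathfrak h\cap\mathfrak p)\cap(\mathfrak l\cap\mathfrak p)=0$, so the dimension equality upgrades to $\mathfrak p=(\mathfrak h\cap\mathfrak p)\oplus(\mathfrak l\cap\mathfrak p)$. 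A dimension count then shows that $\mathfrak g=\mathfrak h+\mathfrak l$ is equivalent to the residual identity $\mathfrak k=\mathfrak k_H+\mathfrak k_L$ for the maximal compact subalgebras, and it is precisely this identity that the classification of absolutely simple triples underlying Theorem \ref{thm:class} is needed to establish.
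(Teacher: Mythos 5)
Your proof is correct and takes essentially the same route as the paper, which obtains Theorem \ref{thm:proper-Lie} as a consequence of Theorem \ref{thm:class} combined with Kobayashi's equivalence between proper cocompact actions of $L$ and of its cocompact lattices, plus the standard transitive-action argument for sufficiency. The differences are only cosmetic: you treat the compact edge cases explicitly, you replace the paper's appeal to Onishchik's Theorem 3.1 (for passing from $\mathfrak{g}=\mathfrak{h}+\mathfrak{l}$ to $G=H\cdot L$) by the open-and-closed orbit argument, and there is a harmless typo ($L\cap L$ should read $L\cap H$).
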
 

There are various reasons for studying compact Clifford-Klein forms (we mention only some of them): 
\begin{enumerate}
\item spectral analysis on pseudo-Riemannian manifolds \cite{k-k}, \cite{k-k1},\cite{kob-int},

\item  a problem of T. Kobayashi: {\it does  any homogeneous space $G/H$ admitting compact Clifford-Klein form admits also a standard one?} Clearly, one of the ways to attack it is to understand all standard Clifford-Klein forms,
\item constructing various compact manifolds with particular geometric structures. For example, a homogeneous space of reductive type is an important example of a homogeneous  pseudo-Riemannian manifold. This observation was exploited in \cite{bt} to explore Einstein metrics on compact Clifford-Klein forms. The existence of some invariant geometric structures is an obstruction to compact Clifford-Klein forms, for example, none of para-Hermitian symmetric spaces admits a compact Clifford-Klein form \cite{kob-y}.    
\end{enumerate}   
Since the foundational works of Benoist, Labourie and T. Kobayashi \cite{ben},\cite{bela},\cite{kob} a lot of work has been done in the whole area. Since there are several excellent surveys of the whole topic (see \cite{kob1},\cite{kob-y}) we prefer to refer to these works instead of a detailed exposition.

In the context of this work, one should mention that there are many non-existence results for compact Clifford-Klein forms (not necessarily standard). We have already mentioned   \cite{bjt}. Topological obstructions to compact Clifford-Klein forms were found in \cite{Mo},\cite{M1}, \cite{T} together with examples. A procedure of constructing homogeneous spaces with no compact Clifford-Klein forms was obtained in \cite{bjst}. On the positive side, all compact standard Clifford-Klein of symmetric spaces were classified in \cite{tojo}. The existence  of {\it non-standard} compact Clifford-Klein forms is a deep mathematical problem of a different nature. 
For instance there are examples of (non-compact) Clifford-Klein forms which are in some sense strongly non-standard:  in \cite{bym} one can find examples of homogeneous spaces $G/H$ of reductive type which admit proper actions of discrete subgroups of $G$ isomorphic to co-compact lattices in $O(n,1),$ $n=2,3,4$ but do not admit proper actions of any non-compact semisimple subgroups of $G.$
We also refer to a very recent work \cite{MST}.

Finally, let us mention that we always consider the problem of standard Clifford-Klein forms under the assumption that $\mathfrak{h}$ and $\mathfrak{l}$ are reductive. The problem of existence of compact Clifford-Klein forms is meaningful also for other types of subgroups of $G$, for example for the solvable ones. There are no such forms for nilpotent $N\subset G$ \cite{ben} and for solvable $\Gamma\subset G$ under rather strong restrictions on $H$ \cite{bt1}. 

\subsection{Methods of proof of the main results}
The proofs in this article are rather long and sometimes technical, therefore we feel it might be useful for the reader to have some overview first. Note that by \cite{kob} when looking for  a standard compact Clifford-Klein form determined by a triple $(\mathfrak{g},\mathfrak{h},\mathfrak{l})$ one encounters with two somewhat different tasks: 
\begin{enumerate}
\item one needs to know when  a subgroup $L$ acts properly on $G/H$, and
\item  when the double coset  space $\Gamma\backslash G/H$ is compact.
\end{enumerate}
The criteria of properness were found in \cite{ben} and \cite{kob}. They are expressed in Lie algebra terms and {\it depend on the knowledge of the embeddings} $\mathfrak{h}\hookrightarrow\mathfrak{g}$ and $\mathfrak{l}\hookrightarrow\mathfrak{g}$. This is one of the main difficulties if one wants to solve the general problem, since such knowledge may be not available. The second difficulty is to learn whether the given proper action is co-compact. According to \cite{kob}  one needs to compare the non-compact dimensions of $\mathfrak{g},\mathfrak{h}$ and $\mathfrak{l}$ and prove the equality $d(\mathfrak{g})=d(\mathfrak{h})+d(\mathfrak{l})$ (still it is not an easy task, although the calculation does not depend on the embedding).  
\vskip6pt
Our methods fully use the results \cite{kob-d},\cite{kobcrit}, \cite{kob}, but:
\begin{enumerate}
\item  we derive {\it  a new property of the Iwasawa decompositions} of $\mathfrak{g}$,$\mathfrak{h}$ and $\mathfrak{l}$ of any {\it standard} triple $(\mathfrak{g},\mathfrak{h},\mathfrak{l})$. This property (called {\it well-embeddedness} )  is formulated in Lemma \ref{lemma:nh+nl} and proved in \cite{botr} with the use of \cite{abis},
\item using Lemma \ref{lemma:nh+nl} we derive some properties of the real  root space decompositions of $\mathfrak{g},\mathfrak{h}$ and $\mathfrak{l}$ and their relations with root decompositions of the complexifications $\mathfrak{g}^c$,$\mathfrak{h}^c$ and $\mathfrak{l}^c$ under the assumption that $\mathfrak{g}\not=\mathfrak{h}+\mathfrak{l}$ (this result was proved in \cite{botr}, we repeat its formulation here, Theorem \ref{thm:root-decomp}),
\item assuming now that the embedding $\mathfrak{h}^c\hookrightarrow\mathfrak{g}^c$ is regular, we derive a contradiction between Theorem \ref{thm:root-decomp} and the regularity (Theorem  \ref{thm:centr}),
\item in the non-regular case, we use Dynkin's classification of maximal subalgebras of simple classical Lie algebras and under the assumptions that $\mathfrak{h}^c$ and $\mathfrak{l}^c$ are both simple and {\it non-regular}, we are able to derive the inequality $d(\mathfrak{g})>d(\mathfrak{h})+d(\mathfrak{l})$ which completes the whole proof.
\end{enumerate}
The following remark seems to be in order. Our proofs essentially use the classification results from variuous sources: classical Dynkin's papers \cite{d},\cite{d1}, tables of fundamental irreducible representations of simple Lie algebras \cite{bou},\cite{bouu}, tables of real forms of simple complex Lie algebras,  of non-compact dimensions of simple Lie algebras  and  tables of maximal semisimple subalgebras in simple Lie algebras \cite{ov}. Parts of these used in this article are reproduced as Tables \ref{tttab3}-\ref{tab-max}  
\begin{remark} {\rm We were informed by the reviewer of our paper \cite{botr} that S. Mehdi and M. Olbrich announced in \cite{MOl} that there are no compact standard Clifford-Klein forms of simple absolutely simple Lie groups $G$ and non-compact $H,L$ except those whose Lie algebras $\mathfrak{g},\mathfrak{h}$ and $\mathfrak{l}$ are contained in Table \ref{tttab}. The proof has not been published yet.}
\end{remark}  
 
\noindent {\bf Acknowledgment}. We thank Yves Benoist, Pralay Chatterjee, Toshiyuki Kobayashi, Yosuke Morita and  T. N. Venkataramana for valuable discussions during the meeting "Zariski dense subgroups, number theory and geometric applications" held  at ICTS, Bangalore in January 2024. Also, some parts of this work were completed during  the second author research stay at the Institute of Mathematical Sciences in Chennai. His special thanks go to the Institute for a wonderful research atmosphere and hospitality.

\section{Preliminaries}

\subsection{Notation}
Our basic references are \cite{knapp},\cite{knapp1} and \cite{ov}. Assume that $(\mathfrak{g},\mathfrak{h},\mathfrak{l})$ is a standard triple, which means  that $\mathfrak{g}$ is absolutely simple, $\mathfrak{h},\mathfrak{l}$ are reductive subalgebras of $\mathfrak{g}$ and $(\mathfrak{g},\mathfrak{h},\mathfrak{l})$ induces a compact standard Clifford-Klein form.  Throughout we work with root systems of complex semisimple Lie algebras and with real root systems of their real forms. Also, we use some relations between them, which we reproduce here in order to fix notation (see \cite{ov}, Section 4.1). We may assume that there exists a Cartan involution $\theta$ of $\mathfrak{g}$ such that $\theta (\mathfrak{h}) = \mathfrak{h},$ $\theta (\mathfrak{l})=\mathfrak{l}.$ Hence the restriction of $\theta$ is a Cartan involution of $\mathfrak{h}$ and $\mathfrak{l} .$  We obtain the following Cartan decompositions
$$\mathfrak{g}=\mathfrak{k}+\mathfrak{p}, \  \mathfrak{h}=\mathfrak{k}_{h}+\mathfrak{p}_{h}, \  \mathfrak{l}=\mathfrak{k}_{l}+\mathfrak{p}_{l}, \ \ \mathfrak{k}_{h},\mathfrak{k}_{l}\subset \mathfrak{k}, \ \mathfrak{p}_{h}, \mathfrak{p}_{l}\subset \mathfrak{p}.$$
Denote by $\mathfrak{g}^c, \mathfrak{h}^c, \mathfrak{l}^c$ the complexifications of $\mathfrak{g},\mathfrak{h},\mathfrak{l},$ respectively. Let $\mathfrak{a}\subset \mathfrak{p}$ be a maximal abelian subspace of $\mathfrak{p}$ and denote by $\mathfrak{m}_{0}$ the centralizer of $\mathfrak{a}$ in $\mathfrak{k}.$ Choose a Cartan subalgebra $\mathfrak{t}$ of $\mathfrak{m}_{0}.$ Then $\mathfrak{j}:=\mathfrak{t}+\mathfrak{a}$ is a Cartan subalgebra of $\mathfrak{g}$ and the complexification $\mathfrak{j}^c$ of $\mathfrak{j}$ is a Cartan subalgebra of $\mathfrak{g}^{c}.$  Let  $\Delta$ be a root system of $\mathfrak{g}^c$ with respect to $\mathfrak{j}^c.$  Consider the root decomposition 
$$\mathfrak{g}^c=\mathfrak{j}^c+\sum_{\alpha\in \Delta} \mathfrak{g}_{\alpha}.$$
The Lie algebra $\hat{\mathfrak{j}}:=i\mathfrak{t}+\mathfrak{a}$ is a real form of $\mathfrak{j}^c$ and
$$\hat{\mathfrak{j}}= \{ A\in \mathfrak{j}^c \ | \ \alpha (A)\in \mathbb{R} \ \textrm{for any} \ \alpha\in \Delta  \} .$$
 The weights of the adjoint representation of $\mathfrak{a}$ constitute a root system $\Sigma\subset\mathfrak{a}^*$ (not necessarily reduced). There is a natural map $pr:\Delta\rightarrow\mathfrak{a}^*$ given by $\alpha\rightarrow \alpha|_{\mathfrak{a}}$. It is easy to see that $pr(\Delta)=\Sigma \cup \{ 0 \} .$ Therefore, we will call $pr(\alpha)=\alpha|_{\mathfrak{a}}\in\mathfrak{a}^*$ (if non-zero) the restricted roots. Thus,
$\Sigma = \{ \alpha|_{\mathfrak{a}} \ | \ \alpha\in\Delta   \}\setminus\{ 0\} , $
$$\mathfrak{g}_{\beta} := \sum_{\substack{\alpha\in \Delta \\ \alpha |_{\mathfrak{a}} = \beta}} \mathfrak{g}_{\alpha} \cap \  \mathfrak{g} .  $$
The reader should keep in mind, that we use the following convention. We use the same notation for complex roots in $\Delta$ and real (restricted) roots in $\Sigma$.  The meaning of these will be always clear either  from the context, or explicitly stated. 

We get the following decomposition of $\mathfrak{g}$
$$\mathfrak{g}=\mathfrak{m}_{0}+\mathfrak{a}+\sum_{\alpha \in \Sigma} \mathfrak{g}_{\alpha},$$
where $\mathfrak{m}_0=\mathfrak{z}_{\mathfrak{g}}(\mathfrak{a})$.
\noindent Let $B$ be the Killing form of $\mathfrak{g}.$ It extends onto $\mathfrak{g}^c$  as a Killing form of $\mathfrak{g}^c$ and we denote it by the same letter.  Consider the compact real form  $\mathfrak{g}_{u}:= \mathfrak{k}+ i\mathfrak{p},$  of $\mathfrak{g}^c ,$ and denote by $\tau$ the complex conjugation in $\mathfrak{g}^c$ with respect to $\mathfrak{g}_{u} .$  Also we denote by $\sigma$ the complex conjugation of $\mathfrak{g}^c$ with respect to $\mathfrak{g}$. It is well known (\cite{ov1}, p. 228) that there is a Hermitian positive-definite form on $\mathfrak{g}^c$ given by the formula
$B_{\tau}(X,Y)=-B(X,\tau(Y)).$
Choose a set of positive restricted roots $\Sigma^{+}\subset \Sigma.$  There is a system of positive roots $\Delta^{+}\subset \Delta$ such that
$$\Sigma^{+}=\{  \alpha|_{\mathfrak{a}} \ | \ \alpha\in \Delta^{+} \} - \{ 0 \} .$$
The subalgebra $\mathfrak{n}:=\sum_{\alpha\in \Sigma^{+}}\mathfrak{g}_{\alpha}$ is a nilpotent subalgebra of $\mathfrak{g}.$ The decomposition
$$\mathfrak{g}=\mathfrak{k}+\mathfrak{a}+\mathfrak{n}$$
is  the Iwasawa decomposition of $\mathfrak{g} .$ Notice that $\mathfrak{a}+ \mathfrak{n} $ is a solvable subalgebra and $\mathfrak{a}$ normalizes $\mathfrak{n} .$ On the Lie group level we have
$G=KAN,$
where $K$ is a maximal compact subgroup of $G$ and $N$ is a maximal unipotent subgroup of $G.$ Analogously we obtain the Iwasawa decompositions for $\mathfrak{h}$ and $\mathfrak{l}$
$$\mathfrak{h}=\mathfrak{k}_{h}+\mathfrak{a}_{h}+\mathfrak{n}_{h}, \ \ H=K_{h}A_{h}N_{h},$$
$$\mathfrak{l}=\mathfrak{k}_{l}+\mathfrak{a}_{l}+\mathfrak{n}_{l}, \ \ L=K_{l}A_{l}N_{l}.$$ 
In the same way we get a nilpotent Lie subalgebra of $\mathfrak{g}$ given by the negative restricted roots
$$\mathfrak{n}^{-}= \sum_{\alpha\in\Sigma^{-}}\mathfrak{g}_{\alpha},$$
as well as $\mathfrak{n}_{h}^{-},\mathfrak{n}_{l}^{-}$ for $\mathfrak{h}$ and $\mathfrak{l} .$  Since  $\theta(\mathfrak{g}_{\alpha})=\mathfrak{g}_{-\alpha}$ for all $\alpha\in\Sigma$ (\cite{knapp}, Proposition 5.9) we get $\theta (\mathfrak{n})=\mathfrak{n}^{-}$.
We will denote by $\Sigma_h$, $\Sigma_l$ the real root systems of $\mathfrak{h}$ and $\mathfrak{l}$, respectively, and consider $\mathfrak{n}_h$ and $\mathfrak{n}_l$ corresponding to $\Sigma_h$ and $\Sigma_l$.
\subsection{Properties of Lie algebra embeddings yielding compact Clifford-Klein forms}\label{subs21}
\begin{lemma}[\cite{botr}]\label{lemma:nh+nl}
Let $(\mathfrak{g},\mathfrak{h},\mathfrak{l})$ be a standard triple. Then we may assume that
$$\mathfrak{n}=\mathfrak{n}_{h}\oplus \mathfrak{n}_{l} , \ \ \mathfrak{n}^{-}=\mathfrak{n}_{h}^{-}\oplus \mathfrak{n}_{l}^{-}, \ \ \mathfrak{a}=\mathfrak{a}_{h}\oplus \mathfrak{a}_{l},$$
$$\theta (\mathfrak{h})=\mathfrak{h}, \ \ \theta (\mathfrak{l})=\mathfrak{l} .$$
\label{dobro}
\end{lemma}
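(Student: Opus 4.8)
The plan is to combine Kobayashi's two criteria — the properness criterion and the co-compactness (non-compact dimension) criterion — with the compatibility result for Iwasawa decompositions supplied by \cite{abis}. I would start from the common Cartan involution $\theta$ fixed in the Notation section, so that $\theta(\mathfrak{h})=\mathfrak{h}$ and $\theta(\mathfrak{l})=\mathfrak{l}$ (the last line of the statement), together with the Cartan decompositions $\mathfrak{g}=\mathfrak{k}+\mathfrak{p}$, $\mathfrak{h}=\mathfrak{k}_h+\mathfrak{p}_h$, $\mathfrak{l}=\mathfrak{k}_l+\mathfrak{p}_l$ and $\mathfrak{p}_h,\mathfrak{p}_l\subseteq\mathfrak{p}$. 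Next I would choose a maximal abelian $\mathfrak{a}_h\subset\mathfrak{p}_h$, enlarge it to a maximal abelian $\mathfrak{a}\subset\mathfrak{p}$, and conjugate so that $\mathfrak{a}_l\subseteq\mathfrak{a}$ as well. Since the triple is standard, $L$ acts properly on $G/H$, and Kobayashi's properness criterion forces $\mathfrak{a}_h\cap w\cdot\mathfrak{a}_l=\{0\}$ for every element $w$ of the Weyl group; taking $w=e$ yields the transversality $\mathfrak{a}_h\cap\mathfrak{a}_l=\{0\}$.

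The central step is to select a single positive system $\Sigma^{+}\subset\Sigma$ whose induced orderings on $\mathfrak{a}_h^{*}$ and on $\mathfrak{a}_l^{*}$ are \emph{simultaneously} compatible with the real root systems $\Sigma_h$ and $\Sigma_l$. Granting such a choice, the positive restricted roots of $\mathfrak{h}$ and of $\mathfrak{l}$ are restrictions of positive restricted roots of $\mathfrak{g}$, so that $\mathfrak{n}_h\subseteq\mathfrak{n}$ and $\mathfrak{n}_l\subseteq\mathfrak{n}$; applying $\theta$ gives $\mathfrak{n}_h^{-},\mathfrak{n}_l^{-}\subseteq\mathfrak{n}^{-}$. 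The existence of such a compatible ordering, together with the transversality $\mathfrak{n}_h\cap\mathfrak{n}_l=\{0\}$ of the two nilpotent blocks, is exactly the technical input I would draw from \cite{abis}. This is where properness, through $\mathfrak{a}_h\cap\mathfrak{a}_l=\{0\}$, is genuinely used: without it, a common Weyl chamber need not separate the two families of restricted root spaces.

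Once these inclusions and trivial intersections are in hand, the conclusion follows from a dimension count driven by co-compactness. Writing $d(\mathfrak{g})=\dim\mathfrak{a}+\dim\mathfrak{n}$ and likewise for $\mathfrak{h}$ and $\mathfrak{l}$, the co-compactness criterion gives $d(\mathfrak{g})=d(\mathfrak{h})+d(\mathfrak{l})$, that is
\[
\dim\mathfrak{a}+\dim\mathfrak{n}=(\dim\mathfrak{a}_h+\dim\mathfrak{a}_l)+(\dim\mathfrak{n}_h+\dim\mathfrak{n}_l).
\]
On the other hand $\mathfrak{a}_h\oplus\mathfrak{a}_l\subseteq\mathfrak{a}$ forces $\dim\mathfrak{a}_h+\dim\mathfrak{a}_l\le\dim\mathfrak{a}$, while $\mathfrak{n}_h+\mathfrak{n}_l\subseteq\mathfrak{n}$ with $\mathfrak{n}_h\cap\mathfrak{n}_l=\{0\}$ forces $\dim\mathfrak{n}_h+\dim\mathfrak{n}_l\le\dim\mathfrak{n}$. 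Both inequalities must then be equalities, so that $\mathfrak{a}=\mathfrak{a}_h\oplus\mathfrak{a}_l$ and $\mathfrak{n}=\mathfrak{n}_h\oplus\mathfrak{n}_l$; applying $\theta$ yields $\mathfrak{n}^{-}=\mathfrak{n}_h^{-}\oplus\mathfrak{n}_l^{-}$, which completes the argument.

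I expect the main obstacle to be the second step: producing one ordering of $\Sigma$ compatible with both $\mathfrak{h}$ and $\mathfrak{l}$ and simultaneously guaranteeing $\mathfrak{n}_h\cap\mathfrak{n}_l=\{0\}$. Unlike the dimension bookkeeping of the last paragraph, this cannot be read off from ranks and non-compact dimensions alone; it requires controlling how the restricted root spaces of the two subalgebras sit inside those of $\mathfrak{g}$, and this is precisely the structural content for which the results of \cite{abis} are indispensable.
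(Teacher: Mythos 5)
Your reconstruction has the right skeleton, and it matches the ingredients the paper itself names: Kobayashi's properness criterion, Kobayashi's co-compactness formula $d(\mathfrak{g})=d(\mathfrak{h})+d(\mathfrak{l})$ (Theorem \ref{koba}), and an input from \cite{abis}. Note that the paper does not prove this lemma at all — it is imported wholesale from \cite{botr} — so the comparison is with that proof, of which this lemma is the main technical result. Your closing dimension count is correct: once one has $\theta$-stability of both subalgebras, $\mathfrak{a}_h\oplus\mathfrak{a}_l\subseteq\mathfrak{a}$, $\mathfrak{n}_h+\mathfrak{n}_l\subseteq\mathfrak{n}$ and $\mathfrak{n}_h\cap\mathfrak{n}_l=\{0\}$, the equality of non-compact dimensions squeezes both inclusions into equalities, and applying $\theta$ gives the statement for $\mathfrak{n}^-$. (Incidentally, $\mathfrak{n}_h\cap\mathfrak{n}_l=\{0\}$ needs no input from \cite{abis}: properness makes $H\cap L$ compact, and a compact subalgebra of $\mathfrak{g}$ contains no nonzero ad-nilpotent element.)

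The genuine gap is your ``central step''. The existence of a single positive system $\Sigma^+$ simultaneously compatible with $\Sigma_h$ and $\Sigma_l$ — that is, one choice of $\mathfrak{n}$ containing both $\mathfrak{n}_h$ and $\mathfrak{n}_l$ while both subalgebras remain $\theta$-stable with $\mathfrak{a}_h,\mathfrak{a}_l\subseteq\mathfrak{a}$ — is essentially the entire content of the lemma, and it cannot be quoted from \cite{abis}. Azad--Biswas prove only that all maximal unipotent subgroups of $G$ are conjugate; this lets you place $\mathfrak{n}_h$ inside $\mathfrak{n}$, or $\mathfrak{n}_l$ inside $\mathfrak{n}$, one at a time (for a single subalgebra you do not even need \cite{abis}: a lexicographic ordering on $\mathfrak{a}^*$ refining one pulled back from $\mathfrak{a}_h^*$ suffices), but it says nothing about placing both inside the same $\mathfrak{n}$. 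Concretely, once $\Sigma^+$ is chosen adapted to $(\mathfrak{a}_h,\Sigma_h^+)$, the only remaining freedom is to move $\mathfrak{l}$ by a further conjugation preserving $\theta$-stability and $\mathfrak{a}_l\subseteq\mathfrak{a}$, and the obstruction is a root $\alpha\in\Sigma$ whose root space meets the projections of both $\mathfrak{h}$ and $\mathfrak{l}$ but on which the two chosen orderings disagree in sign. The transversality $\mathfrak{a}_h\cap\mathfrak{a}_l=\{0\}$ that you extract from the $w=e$ case of the properness criterion is far too weak to exclude this; ruling it out requires the full condition $\mathfrak{a}_h\cap w\cdot\mathfrak{a}_l=\{0\}$ for all $w$ in the Weyl group together with co-compactness, and it is exactly the multi-page technical core of \cite{botr}, in which \cite{abis} serves only as an auxiliary conjugacy statement. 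As written, your argument assumes the hardest assertion of the lemma by appealing to a reference that does not contain it.
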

We will denote by $\Sigma_h$, $\Sigma_l$ the restricted root systems of $\mathfrak{h}$ and $\mathfrak{l}$, respectively, corresponding to the above decompositions.
\begin{definition}
{\rm Embeddings $\mathfrak{h}\hookrightarrow\mathfrak{g}, \mathfrak{l} \hookrightarrow \mathfrak{g}$ fulfilling the conditions of Lemma \ref{dobro} will be called  good, and the corresponding subalgebras will be called well embedded.}
\end{definition}
\noindent In this terminology we may formulate the problem of classifying standard compact Clifford-Klein forms as a problem of describing Lie algebra triples given by good embeddings. We will say that  triples of Lie algebras $(\mathfrak{g},\mathfrak{h},\mathfrak{l})$ and $(\mathfrak{g},\mathfrak{h}',\mathfrak{l}')$ are isomorphic, if there exist inner automorphisms $\varphi_1,\varphi_2\in \operatorname{Int}(\mathfrak{g})$ such that $\mathfrak{h}'=\varphi_1(\mathfrak{h})$ and $\mathfrak{l}'=\varphi_2(\mathfrak{l})$.
\begin{proposition}[\cite{botr}]\label{prop:good}
\noindent Let $(\mathfrak{g},\mathfrak{h},\mathfrak{l})$ be a triple of Lie algebras which determines a standard compact semisimple Clifford-Klein form. Then there is an isomorphic  Lie algebra triple $(\mathfrak{g},\mathfrak{h}',\mathfrak{l}')$ such that $\mathfrak{h}'$ and $\mathfrak{l}'$ are well embedded.  
\end{proposition}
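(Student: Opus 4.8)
The goal is to prove Proposition~\ref{prop:good}: given a triple $(\mathfrak{g},\mathfrak{h},\mathfrak{l})$ determining a standard compact Clifford-Klein form, find an isomorphic triple whose subalgebras are well embedded, i.e. satisfy the conclusions of Lemma~\ref{dobro}. Since Lemma~\ref{dobro} is cited as already established (from \cite{botr}), the real content of the proposition is to upgrade its assertion ``we may assume'' into the precise statement that the well-embeddedness can be \emph{achieved} after replacing $\mathfrak{h},\mathfrak{l}$ by inner-conjugate subalgebras $\mathfrak{h}'=\varphi_1(\mathfrak{h})$, $\mathfrak{l}'=\varphi_2(\mathfrak{l})$. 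So the plan is to track how the various choices made in the setup (the $\theta$-stable structure, the abelian subspaces, the positive systems) can be aligned with a \emph{single} fixed Iwasawa decomposition of $\mathfrak{g}$ by applying inner automorphisms.

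\emph{First} I would fix once and for all a Cartan involution $\theta$ of $\mathfrak{g}$ together with the data $\mathfrak{a}\subset\mathfrak{p}$, $\mathfrak{n}$, $\Sigma^+$ producing the Iwasawa decomposition $\mathfrak{g}=\mathfrak{k}+\mathfrak{a}+\mathfrak{n}$. The first step is the standard conjugacy input: for a reductive subalgebra $\mathfrak{h}\subset\mathfrak{g}$ there is an inner automorphism carrying $\mathfrak{h}$ to a $\theta$-stable subalgebra, so that the restriction of $\theta$ is a Cartan involution of $\mathfrak{h}$; the same for $\mathfrak{l}$. This produces $\theta(\mathfrak{h}')=\mathfrak{h}'$ and $\theta(\mathfrak{l}')=\mathfrak{l}'$ as in the last line of Lemma~\ref{dobro}, and gives Cartan decompositions $\mathfrak{h}'=\mathfrak{k}_h+\mathfrak{p}_h$, $\mathfrak{l}'=\mathfrak{k}_l+\mathfrak{p}_l$ with $\mathfrak{k}_h,\mathfrak{k}_l\subset\mathfrak{k}$ and $\mathfrak{p}_h,\mathfrak{p}_l\subset\mathfrak{p}$. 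The nontrivial issue is to do this for both $\mathfrak{h}$ and $\mathfrak{l}$ \emph{compatibly}, i.e.\ with respect to the same $\theta$; one handles $\mathfrak{h}$ first, then conjugates $\mathfrak{l}$ by an element of the centralizer-type group preserving the already-fixed $\theta$ on $\mathfrak{g}$, using that any two Cartan involutions are conjugate by an inner automorphism.

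\emph{Next} comes the decomposition content $\mathfrak{a}=\mathfrak{a}_h\oplus\mathfrak{a}_l$ and $\mathfrak{n}=\mathfrak{n}_h\oplus\mathfrak{n}_l$ (together with the $\mathfrak{n}^-$ version). Here I would invoke Lemma~\ref{dobro} itself, whose proof in \cite{botr} rests on the well-embeddedness established via \cite{abis}: the point is that for a standard triple the non-compact dimensions match, $d(\mathfrak{g})=d(\mathfrak{h})+d(\mathfrak{l})$, which forces the maximal $\mathbb{R}$-split tori to add up and the nilpotent parts to meet transversally. Concretely, after arranging $\mathfrak{a}_h,\mathfrak{a}_l\subset\mathfrak{a}$ by a further inner conjugation (extending a maximal abelian subspace of $\mathfrak{p}_h$ inside $\mathfrak{p}$, then one of $\mathfrak{p}_l$ inside the centralizer), one chooses the positive system $\Sigma^+$ of $\mathfrak{g}$ so that it is compatible with chosen positive systems $\Sigma_h^+,\Sigma_l^+$; the direct-sum statements for $\mathfrak{n},\mathfrak{n}^-$ then read off from the restricted root space bookkeeping. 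The remaining technical care is to verify that every conjugation used in this second stage can be taken inside the stabilizer of the data fixed in the first stage, so that the single pair of inner automorphisms $\varphi_1,\varphi_2$ accumulating all these steps genuinely witnesses the isomorphism of triples in the sense defined just before the proposition.

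The main obstacle I anticipate is precisely this \emph{simultaneity}: making the choices for $\mathfrak{h}$ and $\mathfrak{l}$ with respect to one common $\theta$, one common $\mathfrak{a}$, and one common $\Sigma^+$, rather than three unrelated Iwasawa decompositions. Each individual alignment (Cartan involution, maximal split abelian, positive system) is standard Lie theory, but chaining them requires that at each stage the available freedom to conjugate has not been exhausted by the previous stage --- i.e.\ that the relevant stabilizer groups still act transitively enough. This is exactly where the hypothesis that $(\mathfrak{g},\mathfrak{h},\mathfrak{l})$ is \emph{standard} (hence well-embeddable by Lemma~\ref{dobro}) does the work: it guarantees the dimension equalities that make the transversal direct-sum decompositions of $\mathfrak{a}$ and $\mathfrak{n}$ possible after conjugation, so that the desired $\varphi_1,\varphi_2\in\operatorname{Int}(\mathfrak{g})$ exist.
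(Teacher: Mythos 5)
Your reading of the logical situation is correct: this paper does not prove Proposition~\ref{prop:good} at all --- both Lemma~\ref{dobro} and the proposition are quoted from \cite{botr}, and within this paper the proposition is nothing more than the precise formulation of the lemma's ``we may assume''. So if you treat Lemma~\ref{dobro} as a black box, the proposition follows essentially in one line, and most of your argument is redundant. The trouble begins when you try to actually \emph{produce} the decompositions by conjugation, because then the argument must stand on its own, and there it has a genuine gap.

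The gap is that you attribute the direct sums $\mathfrak{a}=\mathfrak{a}_h\oplus\mathfrak{a}_l$ and $\mathfrak{n}=\mathfrak{n}_h\oplus\mathfrak{n}_l$ to the equality $d(\mathfrak{g})=d(\mathfrak{h})+d(\mathfrak{l})$, while properness never enters your argument. The dimension equality (Theorem~\ref{koba}, which encodes cocompactness) only constrains the total $\dim\mathfrak{a}_h+\dim\mathfrak{n}_h+\dim\mathfrak{a}_l+\dim\mathfrak{n}_l=\dim\mathfrak{a}+\dim\mathfrak{n}$; it cannot force the intersections $\mathfrak{a}_h\cap\mathfrak{a}_l$ and $\mathfrak{n}_h\cap\mathfrak{n}_l$ to vanish, nor split the equality into its $\mathfrak{a}$-part and $\mathfrak{n}$-part. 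The trivial intersections come from properness: the Benoist--Kobayashi criterion (\cite{ben},\cite{kob}) gives $\mathfrak{a}_h\cap w(\mathfrak{a}_l)=\{0\}$ for every Weyl element $w$ once both split parts lie in $\mathfrak{a}$, and properness makes $H\cap L$ compact, so $\mathfrak{n}_h\cap\mathfrak{n}_l$, consisting of ad-nilpotent elements of a compactly embedded subalgebra, is zero. Separately, placing $\mathfrak{n}_h$ and $\mathfrak{n}_l$ inside one common $\mathfrak{n}$ --- exactly the ``simultaneity'' you flag as the main obstacle --- is not achieved by choosing a positive system of $\Sigma$ compatible with $\Sigma_h^+$ and $\Sigma_l^+$ at once (no such simultaneous compatibility is automatic for two subalgebras); in \cite{botr} it is obtained from the Azad--Biswas conjugacy of maximal unipotent subgroups \cite{abis}, applied so as to preserve the $\theta$-stability and the placement of the split parts arranged earlier. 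Only with all three ingredients in place (Azad--Biswas alignment, properness, cocompactness) does the counting close: the inequalities $\dim\mathfrak{a}_h+\dim\mathfrak{a}_l\le\dim\mathfrak{a}$ and $\dim\mathfrak{n}_h+\dim\mathfrak{n}_l\le\dim\mathfrak{n}$ are then forced to be equalities. At every point where your write-up needs these facts it either appeals to Lemma~\ref{dobro} itself --- which is circular, since that lemma is precisely the statement being made precise --- or to the dimension equality alone, which is insufficient.
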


Assume that $0\not=X\in\mathfrak{t}$. Define
$$\Delta_m=\{\alpha\in\Delta\,|\,\alpha|_{\mathfrak{a}}=0\},$$
$$\Delta_m^{+}=\Delta^+\setminus\Delta_m,\Delta_m^{-}=\Delta^{-}\setminus\Delta_m,$$
$$\Delta_p^{+}=\{\alpha\in\Delta_m^{+}\,|\,\alpha (iX)>0\},\,\Delta_n^{+}=\{\alpha\in\Delta_m^{+}\,|\,\alpha(iX)<0\},$$
$$\Delta_p^{-}=\{\alpha\in\Delta_m^{-}\,|\,\alpha(iX)>0\},\,\Delta_n^{-}=\{\alpha\in\Delta_m^{-}\,|\,\alpha(iX)<0\},$$
$$\Delta_0^{+}=\{\alpha\in\Delta_m^{+}\,|\,\alpha(iX)=0\}, \ \Delta_0^{-}=\{\alpha\in\Delta_m^{-}\,|\,\alpha(iX)=0\} ,$$
$$\Delta_{0}=\Delta_{0}^{+}\cup \Delta_{0}^{-}.$$
Let us now formulate the basic technical tool used in the proof of the main theorem. This tool is a restriction on the root system of $\mathfrak{g}^c$ derived under the assumption that the triple $(\mathfrak{g},\mathfrak{h},\mathfrak{l})$ yields a standard compact Clifford-Klein form, but $\mathfrak{g}\not=\mathfrak{h}+\mathfrak{l}$.
\begin{theorem}[\cite{botr}]\label{thm:root-decomp} 
Assume that $(\mathfrak{g},\mathfrak{h},\mathfrak{l})$ is a standard triple of well embedded subalgebras and that $\mathfrak{g}\not=\mathfrak{h}+\mathfrak{l}$. We may assume that there exists a non-zero $X\in\mathfrak{t}$ such that $X$ is $B$-orthogonal to $\mathfrak{h}+\mathfrak{l}$.

Let
$$Z=\sum_{\alpha\in\Delta_p^{+}\cup\Delta_n^{+}}\mathfrak{g}_{\alpha}\subset\mathfrak{n}^c.$$
Let $\pi:\mathfrak{n}^c=\sum_{\alpha\in\Delta_m^+}\mathfrak{g}_{\alpha}\rightarrow Z$ be the natural projection, $Z_h=\pi(\mathfrak{n}_h^c), Z_l=\pi(\mathfrak{n}_l^c)$. Assume that $(\mathfrak{g},\mathfrak{h},\mathfrak{l})$ is a standard triple and that there exists a non-zero $X\in\mathfrak{t}$ such that $X\not\in (\mathfrak{h}+\mathfrak{l})$. We may assume that $X\in\mathfrak{t}$ and that $X$ is $B$-orthogonal to $\mathfrak{h}+\mathfrak{l} .$ Then:
\begin{enumerate}
\item There exists a basis of $Z_h$ of the form
$$S_h^i=x_{\alpha_i}+\sum_{l=1}^ka^i_{k+l}x_{\alpha_{k+l}},a^i_{k+l}\in\mathbb{C},\alpha_i\in\Delta_p^{+},\alpha_{k+l}\in\Delta_n^{+} .$$
\item For any $S_h^i\in Z_h$ $\alpha_i|_{\mathfrak{a}_h}\in\Sigma_h$.
\item Each complexified real root space $\mathfrak{h}_{\gamma}^c$ is spanned by vectors of the form
$$S_h^{i_1}+Q_1,...,S_h^{i_s}+Q_s,Q_{s+1},...,Q_{s+w},s+w=\dim\,{h}_{\gamma}^c,$$
where $\alpha_{i_1},...,\alpha_{i_s}$ are all roots from $\Delta_p^{+}$ whose restrictions onto $\mathfrak{a}_h$ coincide with $\gamma$, while all $Q_j$ satisfy the conditions
$$Q_j\in\sum_{\alpha\in\Delta_{0}, \ \alpha|_{\mathfrak{a}_{h}}=\gamma}\mathfrak{g}_{\alpha_{c}}.$$
\end{enumerate}
Analogous conditions hold for $\mathfrak{l}_{\gamma}^c$.
\end{theorem}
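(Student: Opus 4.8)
The plan is to fix a good auxiliary element $X$ and then to extract all of 1--3 from a single vanishing identity for the $B_\tau$-quadratic form of $\operatorname{ad}(iX)$. First I would justify the choice of $X\in\mathfrak{t}$. Well-embeddedness gives $\mathfrak{h}+\mathfrak{l}\supseteq\mathfrak{a}\oplus\mathfrak{n}\oplus\mathfrak{n}^{-}$, because $\mathfrak{a}=\mathfrak{a}_h\oplus\mathfrak{a}_l$, $\mathfrak{n}=\mathfrak{n}_h\oplus\mathfrak{n}_l$, $\mathfrak{n}^{-}=\mathfrak{n}_h^{-}\oplus\mathfrak{n}_l^{-}$ and each summand lies in $\mathfrak{h}$ or $\mathfrak{l}$. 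A short computation in the restricted-root decomposition of $B$ shows $(\mathfrak{a}\oplus\mathfrak{n}\oplus\mathfrak{n}^{-})^{\perp}=\mathfrak{m}_0$, hence $(\mathfrak{h}+\mathfrak{l})^{\perp}\subseteq\mathfrak{m}_0$; since $B$ is nondegenerate and $\mathfrak{g}\neq\mathfrak{h}+\mathfrak{l}$ this complement is nonzero, so I may pick $0\neq Y\in\mathfrak{m}_0\cap(\mathfrak{h}+\mathfrak{l})^{\perp}$. As $\mathfrak{t}$ is only required to be a Cartan subalgebra of $\mathfrak{m}_0$, while the Iwasawa data and well-embeddedness depend solely on $\mathfrak{a},\mathfrak{n}$ and $\theta$, I am free to take $\mathfrak{t}\ni Y$ and set $X:=Y$.

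Next I record the elementary weight fact that drives the bookkeeping: since $\mathfrak{a}_h\subseteq\mathfrak{a}$ acts on $\mathfrak{g}_\alpha$ by $\alpha|_{\mathfrak{a}_h}$, one has $\mathfrak{h}_\gamma^{c}\subseteq\sum_{\alpha|_{\mathfrak{a}_h}=\gamma}\mathfrak{g}_\alpha$, so for $\gamma\in\Sigma_h^{+}$ the space $\mathfrak{h}_\gamma^{c}\subset\mathfrak{n}_h^{c}$ splits along $\Delta_p^{+},\Delta_n^{+},\Delta_0^{+}$, the last part being exactly $\ker(\pi|_{\mathfrak{h}_\gamma^{c}})$. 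The crux, and the hard part, is then Part 1, which I would reformulate as $Z_h\cap Z_n=0$, where $Z_n:=\sum_{\alpha\in\Delta_n^{+}}\mathfrak{g}_\alpha$ and $Z_p:=\sum_{\alpha\in\Delta_p^{+}}\mathfrak{g}_\alpha$, so that $Z=Z_p\oplus Z_n$. The key is the identity
\[
B_\tau(\operatorname{ad}(iX)v,v)=0\qquad\text{for all }v\in\mathfrak{n}_h^{c}.
\]
To see it, note that $\tau=\theta\sigma$, that $\sigma$ preserves the real subspace $\mathfrak{n}_h$, and that $\theta(\mathfrak{n}_h^{c})=\mathfrak{n}_h^{-,c}\subseteq\mathfrak{h}^{c}$; hence $\tau v\in\mathfrak{h}^{c}$, so $[v,\tau v]\in\mathfrak{h}^{c}$, and invariance of $B$ together with $X\perp\mathfrak{h}$ gives $B_\tau(\operatorname{ad}(iX)v,v)=-B(iX,[v,\tau v])=-i\,B(X,[v,\tau v])=0$. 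Because distinct root spaces are $B_\tau$-orthogonal and $\operatorname{ad}(iX)$ acts on $\mathfrak{g}_\alpha$ by the real scalar $\alpha(iX)$, writing $v=\sum_\alpha v_\alpha$ turns the identity into the balance
\[
\sum_{\alpha\in\Delta_p^{+}}\alpha(iX)\,B_\tau(v_\alpha,v_\alpha)=\sum_{\alpha\in\Delta_n^{+}}\bigl(-\alpha(iX)\bigr)\,B_\tau(v_\alpha,v_\alpha)
\]
of two nonnegative sums. If $\pi(v)\in Z_n$, i.e. all $\Delta_p^{+}$-components of $v$ vanish, the left side is $0$, forcing every $\Delta_n^{+}$-component to vanish too; thus $\pi(v)=0$ and $Z_h\cap Z_n=0$. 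Equivalently the projection $Z_h\to Z_p$ is injective, and reduced echelon form with pivots in the $\Delta_p^{+}$-coordinates yields the basis $S_h^{i}$ of Part 1.

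To finish, I would organize the echelon reduction along $\mathfrak{n}_h^{c}=\bigoplus_{\gamma\in\Sigma_h^{+}}\mathfrak{h}_\gamma^{c}$. By the weight fact the images $\pi(\mathfrak{h}_\gamma^{c})$ have pairwise disjoint root supports, so the basis may be chosen inside each block; the pivot roots of the $\gamma$-block then satisfy $\alpha_i|_{\mathfrak{a}_h}=\gamma\in\Sigma_h$, which is Part 2. For Part 3 I fix $\gamma$: the pivots are the $\Delta_p^{+}$-roots $\alpha_{i_1},\dots,\alpha_{i_s}$ restricting to $\gamma$; lifting each $S_h^{i_t}$ back into $\mathfrak{h}_\gamma^{c}$ adds a kernel correction $Q_t$, and $\ker(\pi|_{\mathfrak{h}_\gamma^{c}})$ supplies the remaining $Q_{s+1},\dots,Q_{s+w}$ with $s+w=\dim\mathfrak{h}_\gamma^{c}$. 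The identical argument, with $X\perp\mathfrak{l}$ and $\theta(\mathfrak{n}_l)=\mathfrak{n}_l^{-}$, treats $\mathfrak{l}_\gamma^{c}$.

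The only step with genuine content is the crux above: recognizing that orthogonality of $X$ to $\mathfrak{h}+\mathfrak{l}$ together with $\theta$-stability of the embeddings forces the $B_\tau$-quadratic form of $\operatorname{ad}(iX)$ to vanish on $\mathfrak{n}_h^{c}$, and that the resulting sign-definite balance yields $Z_h\cap Z_n=0$; everything else is weight bookkeeping and linear algebra. The point demanding the most care will be the compatibility of the three (anti)involutions $\theta,\sigma,\tau$ on $\mathfrak{g}^{c}$ and the bilinear-versus-Hermitian extensions of $B$ and $B_\tau$, which is exactly what makes $[v,\tau v]\in\mathfrak{h}^{c}$ and the vanishing identity rigorous.
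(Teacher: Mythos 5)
Your choice of $X$, the vanishing identity $B_\tau(\operatorname{ad}(iX)v,v)=-B(iX,[v,\tau v])=0$ for $v\in\mathfrak{n}_h^c$, and the sign-balance argument giving $Z_h\cap Z_n=0$ are all correct, and this positivity mechanism (which is exactly what Lemma~\ref{lemtau} and the form $B_\tau$ from the preliminaries are set up for) is the right core idea. The genuine gap is the step ``reduced echelon form with pivots in the $\Delta_p^{+}$-coordinates yields the basis $S_h^i$ of Part 1.'' Injectivity of the projection $p\colon Z_h\to Z_p$ does not imply that $p(Z_h)$ is spanned by root vectors: echelon reduction only produces vectors of the form $x_{\alpha_i}+(\text{non-pivot }\Delta_p^{+}\text{ terms})+(\Delta_n^{+}\text{ terms})$, whereas Part 1 --- and the way this paper later uses it (``$[Y,x_{\alpha_i}]=0$ for all $\alpha_i\in\Delta_p^{\pm}$'' in Section~\ref{sec:regular}, and ``since $i$ takes all values from $1$ to $k$'' in the proof of Lemma~\ref{lemaorto}) --- requires every root of $\Delta_p^{+}$ to occur as a leading term with \emph{no} other $\Delta_p^{+}$ components, i.e.\ that $p|_{Z_h}$ be a bijection onto $Z_p$, so that $Z_h$ is the graph of a linear map $Z_p\to Z_n$. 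Injectivity alone cannot give this: for two roots $\alpha_1,\alpha_2\in\Delta_p^{+}$ with the same restriction to $\mathfrak{a}_h$, the $\sigma$-stable line $\mathbb{C}\,(x_{\alpha_1}+x_{\alpha_2}+x_{\sigma\alpha_1}+x_{\sigma\alpha_2})$ meets $Z_n$ trivially yet projects onto $\mathbb{C}(x_{\alpha_1}+x_{\alpha_2})$, which is not a coordinate subspace. The tell-tale sign is that your crux never uses $\mathfrak{l}$ at all, while the statement genuinely couples $\mathfrak{h}$ and $\mathfrak{l}$.

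The missing ingredient is a dimension count that does use $\mathfrak{l}$: (i) your identity applies verbatim to $v\in\mathfrak{n}_l^c$ (since $X\perp_B\mathfrak{l}$ and $\tau(\mathfrak{n}_l^c)\subset\mathfrak{l}^c$), giving $Z_l\cap Z_n=0$ as well; (ii) well-embeddedness $\mathfrak{n}=\mathfrak{n}_h\oplus\mathfrak{n}_l$ (Lemma~\ref{dobro}) gives $Z_h+Z_l=\pi(\mathfrak{n}^c)=Z$; (iii) $\sigma$ fixes restrictions to $\mathfrak{a}$ and reverses the sign of $\alpha(iX)$, so it bijects $\Delta_p^{+}$ with $\Delta_n^{+}$, whence $\dim Z_p=\dim Z_n=k$ and $\dim Z=2k$. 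Then $\dim Z_h\le k$, $\dim Z_l\le k$ and $\dim Z_h+\dim Z_l\ge 2k$ force $\dim Z_h=k=\dim Z_p$, so $p|_{Z_h}$ is an isomorphism, $Z_h=\{w+T(w)\,:\,w\in Z_p\}$ for a linear $T\colon Z_p\to Z_n$, and the vectors $S_h^i=x_{\alpha_i}+T(x_{\alpha_i})$, $\alpha_i\in\Delta_p^{+}$, form exactly the basis of Part 1. Only with this bijectivity in hand does your blockwise bookkeeping along $\mathfrak{n}_h^c=\bigoplus_\gamma\mathfrak{h}_\gamma^c$ for Parts 2 and 3 go through, since it is what guarantees that \emph{every} $\alpha\in\Delta_p^{+}$ with $\alpha|_{\mathfrak{a}_h}=\gamma$ actually occurs among the pivots $\alpha_{i_1},\dots,\alpha_{i_s}$ of the $\gamma$-block.
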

\begin{corollary}[\cite{botr}]\label{cor:split}
If $(\mathfrak{g}, \mathfrak{h}, \mathfrak{l}) ,$ with $\mathfrak{g}$ absolutely simple and split, yields a  standard compact Clifford-Klein form then $\mathfrak{g}= \mathfrak{h}+ \mathfrak{l} .$ Thus $(\mathfrak{g}, \mathfrak{h}, \mathfrak{l})$ is contained in Table \ref{tttab}.
\end{corollary}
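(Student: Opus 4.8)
The plan is to exploit the single structural feature of a split real form that makes this case degenerate: its real rank equals its complex rank. Concretely, when $\mathfrak{g}$ is split, the maximal abelian subspace $\mathfrak{a}\subset\mathfrak{p}$ is already a Cartan subalgebra of $\mathfrak{g}$, so that $\mathfrak{z}_{\mathfrak{g}}(\mathfrak{a})=\mathfrak{a}$ and the centralizer $\mathfrak{m}_0=\mathfrak{z}_{\mathfrak{k}}(\mathfrak{a})$ is trivial; consequently its Cartan subalgebra $\mathfrak{t}$ vanishes, $\mathfrak{t}=0$. I would isolate this as the only input special to the split situation. It is a standard fact about normal (split) real forms and is visible directly from the restricted root space decomposition, where all multiplicities equal one and the term $\mathfrak{m}_0$ is absent.

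With $\mathfrak{t}=0$ in hand, I would conclude by a direct and self-contained argument. By Proposition \ref{prop:good} we may assume $\mathfrak{h},\mathfrak{l}$ are well embedded, so Lemma \ref{lemma:nh+nl} gives $\mathfrak{a}=\mathfrak{a}_h\oplus\mathfrak{a}_l$, $\mathfrak{n}=\mathfrak{n}_h\oplus\mathfrak{n}_l$, $\mathfrak{n}^-=\mathfrak{n}_h^-\oplus\mathfrak{n}_l^-$. Since $\mathfrak{a}_h,\mathfrak{n}_h,\mathfrak{n}_h^-\subset\mathfrak{h}$ and likewise for $\mathfrak{l}$, these identities force $\mathfrak{h}+\mathfrak{l}\supseteq\mathfrak{a}+\mathfrak{n}+\mathfrak{n}^-$. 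On the other hand the restricted root space decomposition reads $\mathfrak{g}=\mathfrak{m}_0+\mathfrak{a}+\mathfrak{n}+\mathfrak{n}^-$, and for split $\mathfrak{g}$ the summand $\mathfrak{m}_0$ disappears. Hence $\mathfrak{g}=\mathfrak{a}+\mathfrak{n}+\mathfrak{n}^-\subseteq\mathfrak{h}+\mathfrak{l}$, i.e. $\mathfrak{g}=\mathfrak{h}+\mathfrak{l}$. Equivalently, one may keep the statement a genuine corollary and argue by contradiction via Theorem \ref{thm:root-decomp}: if $\mathfrak{g}\neq\mathfrak{h}+\mathfrak{l}$, that theorem produces a non-zero $X\in\mathfrak{t}$ (indeed one $B$-orthogonal to $\mathfrak{h}+\mathfrak{l}$), which is impossible once $\mathfrak{t}=0$. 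Either route reduces the split case to an Onishchik decomposition.

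To reach the final sentence of the statement, I would invoke Onishchik's classification \cite{on} of decompositions $\mathfrak{g}=\mathfrak{h}+\mathfrak{l}$ together with the standing hypothesis that the triple yields a compact standard Clifford-Klein form: among Onishchik's list exactly those triples with $\mathfrak{h}\cap\mathfrak{l}$ compact and with the non-compact dimension equality $d(\mathfrak{g})=d(\mathfrak{h})+d(\mathfrak{l})$ persist, and these are precisely the entries of Table \ref{tttab}. I do not expect a real obstacle anywhere in this proof; the only point demanding care is the verification that $\mathfrak{t}=0$ genuinely follows from splitness inside the normalization fixed in the Preliminaries (a Cartan involution preserving $\mathfrak{h}$ and $\mathfrak{l}$, with compatible Iwasawa data), which is routine once the definition of a split form is recalled. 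The essential content of the corollary is thus the observation that splitness kills $\mathfrak{t}$, after which Lemma \ref{lemma:nh+nl} and Theorem \ref{thm:root-decomp} do all the remaining work.
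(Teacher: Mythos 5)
Your proposal is correct, and its second route is exactly the paper's own (implicit) argument: the corollary is imported from \cite{botr} and placed directly after Theorem \ref{thm:root-decomp} because, if $\mathfrak{g}\neq\mathfrak{h}+\mathfrak{l}$, that theorem supplies a non-zero $X\in\mathfrak{t}$ which is $B$-orthogonal to $\mathfrak{h}+\mathfrak{l}$, while splitness forces $\mathfrak{z}_{\mathfrak{k}}(\mathfrak{a})=\mathfrak{m}_0=0$ and hence $\mathfrak{t}=0$, a contradiction. Your first, direct route is a mild but genuine simplification worth recording: Lemma \ref{lemma:nh+nl} alone gives $\mathfrak{h}+\mathfrak{l}\supseteq\mathfrak{a}+\mathfrak{n}+\mathfrak{n}^{-}$, and for split $\mathfrak{g}$ the restricted root decomposition $\mathfrak{g}=\mathfrak{m}_0+\mathfrak{a}+\mathfrak{n}+\mathfrak{n}^{-}$ collapses, so the root-space analysis of Theorem \ref{thm:root-decomp} is never needed; in fact this is the same mechanism that, in the general case, lets the vector $X$ of that theorem be placed inside $\mathfrak{m}_0$ at all, since $\mathfrak{m}_0$ is precisely the $B$-orthogonal complement of $\mathfrak{a}+\mathfrak{n}+\mathfrak{n}^{-}$. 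Your handling of the last sentence (Onishchik's classification \cite{on} filtered by properness and the criterion $d(\mathfrak{g})=d(\mathfrak{h})+d(\mathfrak{l})$ of Theorem \ref{koba}) also matches the paper's framework; the only step deserving explicit mention is that passing to the well-embedded isomorphic triple of Proposition \ref{prop:good} is harmless, because the validity of $\mathfrak{g}=\mathfrak{h}+\mathfrak{l}$ is invariant under separate conjugation of $\mathfrak{h}$ and $\mathfrak{l}$.
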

Theorem \ref{thm:root-decomp} is quite technical, so we think it will be instructive to support it by example which somehow explains the consequences of the relations between complex and real root systems caused by the assumption $\mathfrak{g}\not=\mathfrak{h}+\mathfrak{l}$. These relations together with Lemma \ref{lemz} yield a contradiction.
\begin{example} {\rm We follow notation from Proposition \ref{prop:inequality}, the reader should consult it first. Let $\mathfrak{g}=\mathfrak{so}(n,n+2)$. Looking at Table 4 in \cite{ov} one can look at the Satake diagram in this case}
$$\dynkin[labels={\alpha_1,\alpha_2,\alpha_{n-2},\alpha_{n-1}, \alpha_{n},\alpha_{n+1}}, edge length=2cm, involutions={[in=120,out=60,relative]56}]{D}{oo.oooo}$$
{\rm and notice that $\dim\mathfrak{m}_0=1$ and that $\Delta_m=\emptyset$. It follows easily that necessarily 
$$X=H_{n+1},\Delta_p^+=\{\bar\alpha_n=\alpha_n+\alpha_{n+1}\},\Delta_p^{-}=\{\bar\alpha_n=\alpha_n-\alpha_{n+1}\}.$$
By Lemma \ref{lemz} every root in $\Delta_0-\Delta_m$ is a sum of a root from $\Delta_p^{\pm}\cup\Delta_n^{\pm}$ and a root from $\Delta_p^{\pm}\cup\Delta_n^{\pm}\cup \Delta_{m}$. Since $\Delta_{m}=\emptyset$ we get $\Delta=D_{n+1}$ and therefore it must be
$$\Delta=\{\pm\alpha_n\pm\alpha_{n+1}\},$$
but then the only possibility is $\Delta=D_2$, which implies $\mathfrak{g}=\mathfrak{so}(1,3)$ which never yields compact Clifford-Klein forms (with $\mathfrak{h}$ proper and of non-compact type) because the real rank of $\mathfrak{g}$ is 1.}
\end{example}

  We consider triples $(G,H,L)$ which yield Clifford-Klein forms up to an equivalence which is given by the following well-known  fact:
 if a triple $(G,H,L)$ yields a standard compact Clifford-Klein form then so does $(G,g_{1}Hg_{1}^{-1},g_{2}Lg_{2}^{-1})$ for any $g_{1},g_{2}\in G.$
This enables us to choose embeddings $\mathfrak{h}\hookrightarrow\mathfrak{g}$, $\mathfrak{l}\hookrightarrow\mathfrak{g}$ up to conjugacy in $G$, and we will do this throughout without further notice. By the general theory of compact Clifford-Klein forms and \cite{on}, Theorem 3.1 we can work on the Lie algebra level and study Lie algebra decompositions and their relation to the problem of standard compact Clifford-Klein forms.

Also, using the well known result \cite{bela} we restrict ourselves to the case when $\mathfrak{h}$ and $\mathfrak{l}$ are semisimple and are direct sums of ideals of non-compact type  (one can also consult \cite{botr}).

Put 
$$d(G):=\dim\,G/K,\,d(H):=\dim\,H/K_{h},\,  d(L):=\dim\,L/K_{l}.$$
Clearly, we can write $d(\mathfrak{g}), d(\mathfrak{h}),d(\mathfrak{l}).$
\begin{theorem}[\cite{kob}, Theorem 4.7]
Assume that $L$ acts properly on $G/H.$ The triple $(G,H,L)$ induces a standard compact Clifford-Klein form if and only if 
\begin{equation}
d(G)=d(H)+d(L),\,\text{or, equivalently}\,\,d(\mathfrak{g})=d(\mathfrak{h})+d(\mathfrak{l}).
\label{eq2}
\end{equation}
\label{koba}
\end{theorem}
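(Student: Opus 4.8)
The plan is to convert the analytic statement about cocompactness into a purely topological one about Poincaré duality, and to read off the dimension formula as the additivity of formal dimensions in a fibration. By Selberg's lemma I may pass to a torsion-free finite-index subgroup $\Gamma'\subset\Gamma$; since $\Gamma'\backslash G/H\to\Gamma\backslash G/H$ is a finite covering, one quotient is compact iff the other is, and the quantities $d(\mathfrak{g}),d(\mathfrak{h}),d(\mathfrak{l})$ are unaffected, so I assume $\Gamma$ torsion-free. Here the hypothesis that $L$ (hence $\Gamma$) acts properly is used in an essential but mild way: a proper action of a torsion-free discrete group has trivial stabilizers and is properly discontinuous, so $X:=\Gamma\backslash G/H$ is a genuine (Hausdorff) manifold of dimension $n=\dim\mathfrak{g}-\dim\mathfrak{h}$. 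Recall next the Mostow vector-bundle structure: since $H$ is $\theta$-stable reductive, $G/H$ is $K$-equivariantly diffeomorphic to the total space of the $K$-vector bundle over $K/K_h$ associated to the $K_h$-representation on the orthocomplement of $\mathfrak{p}_h$ in $\mathfrak{p}$. Hence $G/H$ deformation retracts onto the closed manifold $K/K_h$, whose dimension I call $m=\dim\mathfrak{k}-\dim\mathfrak{k}_h$, and the fibre dimension is $d(\mathfrak{g})-d(\mathfrak{h})$, so that $n=m+\big(d(\mathfrak{g})-d(\mathfrak{h})\big)$.

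The topological input comes from the Borel construction. Because $\Gamma$ acts freely and properly discontinuously, the homotopy quotient coincides with the honest one, $X\simeq (G/H)\times_\Gamma E\Gamma$, and the latter fibres over $B\Gamma$ with fibre $G/H\simeq K/K_h$. Thus, up to homotopy, I have a fibration
$$K/K_h\longrightarrow X\longrightarrow B\Gamma,$$
whose fibre is a closed $m$-manifold, i.e.\ a finite Poincar\'e duality complex of formal dimension $m$. Since $\Gamma$ is a torsion-free cocompact lattice in the reductive group $L$, the space $\Gamma\backslash L/K_l$ is a closed aspherical manifold of dimension $d(\mathfrak{l})$, so $B\Gamma$ is a Poincar\'e duality space and $\Gamma$ is a Poincar\'e duality group of formal dimension $d(\mathfrak{l})$. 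To avoid orientation bookkeeping I work throughout with $\mathbb{Z}/2$-coefficients, over which every manifold and every PD group in sight is orientable.

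The conclusion then follows from the additivity of Poincar\'e duality in fibrations (the ``two out of three'' principle of Gottlieb type): if two of fibre, base, total space of a fibration are finite-type PD spaces, so is the third, and the formal dimensions add. For the direction $d(\mathfrak{g})=d(\mathfrak{h})+d(\mathfrak{l})\Rightarrow$ cocompact, the equality gives $n=m+d(\mathfrak{l})$; since the fibre is $PD_m$ and the base $B\Gamma$ is $PD_{d(\mathfrak{l})}$, the total space $X$ is a $PD_n$ space; but a connected $n$-manifold that is a genuine $PD_n$ space must be closed (an open connected $n$-manifold has vanishing top $\mathbb{Z}/2$-homology, contradicting the fundamental class), so $X$ is compact. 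For the converse, cocompactness makes $X$ a closed $n$-manifold, hence $PD_n$; combined with the fibre being $PD_m$, the principle forces $B\Gamma$ to be $PD_{n-m}$, and since the formal dimension of a PD space is well defined and equals $d(\mathfrak{l})$, I obtain $n-m=d(\mathfrak{l})$, i.e.\ $d(\mathfrak{g})=d(\mathfrak{h})+d(\mathfrak{l})$. I expect the main obstacle to be the careful verification of the hypotheses of the fibration Poincar\'e-duality theorem, namely that the Borel fibration is of finite type with fibre a finite PD complex, that $B\Gamma$ is honestly a PD group (not merely of finite cohomological dimension), and that the orientation local systems are controlled, for which the passage to $\mathbb{Z}/2$-coefficients is the key technical simplification; the properness hypothesis, by contrast, enters only to guarantee that $X$ is a manifold, while all the quantitative content is carried by the duality transfer. (An alternative route, closer to \cite{kob}, would replace this homotopy-theoretic step by an analysis of the Cartan projection $\mu\colon G\to\mathfrak{a}^+$, deducing the inequality $d(\mathfrak{g})\ge d(\mathfrak{h})+d(\mathfrak{l})$ from properness and the reverse inequality from cocompactness, but the duality argument above seems to me the most transparent.)
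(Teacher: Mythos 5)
You should first note that the paper itself contains no proof of this statement: it is quoted from \cite{kob}, Theorem 4.7, so the only meaningful comparison is with Kobayashi's original argument. Your proof is correct modulo standard facts, and it is in substance that same cohomological argument: Kobayashi also passes to a torsion-free $\Gamma$, uses the Mostow decomposition $G/H\cong K\times_{K_h}\mathbb{R}^{d(\mathfrak{g})-d(\mathfrak{h})}$, realizes $\Gamma\backslash G/H$ up to homotopy as the total space of a fibration with fiber $K/K_h$ over a $K(\Gamma,1)$ of formal dimension $d(\mathfrak{l})$ (he works with the concrete bundle $K/K_h\to\Gamma\backslash G/K_h\to\Gamma\backslash G/K$ together with the homotopy equivalence $\Gamma\backslash G/K_h\simeq\Gamma\backslash G/H$, where you use the Borel construction), and then compares top nonvanishing $\mathbb{Z}/2$-cohomology degrees, which is exactly what your appeal to the Gottlieb-type fibration Poincar\'e duality theorem packages; note that both directions really only need the easy half of that principle (fiber and base PD imply the corner class $H^{d(\mathfrak{l})}\bigl(B\Gamma;H^{m}(K/K_h;\mathbb{Z}/2)\bigr)$ survives to $E_\infty$), since $B\Gamma$ is already known to be a closed aspherical manifold. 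Two points deserve explicit mention in your write-up: for the implication $d(\mathfrak{g})=d(\mathfrak{h})+d(\mathfrak{l})\Rightarrow$ existence of a standard compact form, you must invoke Borel's theorem to know that a cocompact lattice $\Gamma\subset L$ exists at all; and your closing parenthetical is misleading, because the Cartan projection analysis in \cite{kob} is the proof of the \emph{properness} criterion, whereas the cocompactness criterion you are proving is established there cohomologically, i.e., by essentially your route.
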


Now we cite a theorem of T. Kobayashi which plays an important role in the proof of the main theorem of this article: it  eliminates some (otherwise possible)  triples $(\mathfrak{g},\mathfrak{h},\mathfrak{l})$ from the list of candidates.
\begin{theorem}[\cite{kob-d}, Theorem 1.5]\label{thm:kob-d} Let $G/H$ be a homogeneous space of reductive type. If $G/H$ has a compact Clifford-Klein form, then there does not exist a closed subgroup $G'$ reductive in $G$ satisfying the conditions
\begin{itemize}
\item $\mathfrak{a}_{g'}\subseteq\mathfrak{a}_h$,
\item $d(G')>d(H)$.
\end{itemize}
\end{theorem}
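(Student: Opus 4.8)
The plan is to argue by contradiction, combining the Benoist--Kobayashi properness criterion (\cite{ben},\cite{kobcrit}) with the theory of cohomological dimension of proper discontinuous actions on reductive homogeneous spaces. Suppose $G/H$ carries a compact Clifford--Klein form, realized by a discrete subgroup $\Gamma\subset G$ acting properly discontinuously and cocompactly, and suppose that a reductive $G'\subset G$ exists with $d(G')>d(H)$ and, after conjugating so that $\mathfrak{a}_{g'}$ and $\mathfrak{a}_h$ lie in a common maximal abelian $\mathfrak{a}\subset\mathfrak{p}$, with $\mathfrak{a}_{g'}\subseteq\mathfrak{a}_h$. Since $G$ is linear, by Selberg's lemma I may pass to a torsion-free finite-index subgroup of $\Gamma$ (the quotient of $G/H$ stays compact), so that ordinary cohomological dimension is available; I keep calling it $\Gamma$.

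First I would transfer properness from $G/H$ to $G/G'$. Let $\mu:G\to\mathfrak{a}^{+}$ be the Cartan projection. Properness of the $\Gamma$-action on $G/H$ is equivalent, by \cite{ben},\cite{kobcrit}, to $\mu(\Gamma)$ being separated from the Weyl orbit $W\cdot\mathfrak{a}_h$: for every $R>0$ only finitely many $\gamma\in\Gamma$ have $\mu(\gamma)$ within distance $R$ of $W\cdot\mathfrak{a}_h$. Since $\mathfrak{a}_{g'}\subseteq\mathfrak{a}_h$ forces $W\cdot\mathfrak{a}_{g'}\subseteq W\cdot\mathfrak{a}_h$, the $R$-neighbourhood of $W\cdot\mathfrak{a}_{g'}$ is contained in that of $W\cdot\mathfrak{a}_h$, so the same finiteness holds with $\mathfrak{a}_{g'}$ in place of $\mathfrak{a}_h$. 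Hence $\Gamma$ acts properly discontinuously on $G/G'$ as well, though now not necessarily cocompactly.

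The decisive step is a dimension comparison via cohomological dimension. Writing $d(G)-d(H')$ for the non-compact dimension of a reductive $G/H'$ (its maximal compact $K'$ gives $G/H'\simeq K/K'$), I would invoke the two basic facts of Kobayashi's theory (\cite{kob-d},\cite{kob}): a proper discontinuous action of a torsion-free $\Gamma$ on $G/H'$ satisfies $\operatorname{cd}(\Gamma)\le d(G)-d(H')$, with equality when the action is moreover cocompact. Applying the equality to the compact form on $G/H$ gives $\operatorname{cd}(\Gamma)=d(G)-d(H)$, while applying the inequality to the proper action on $G/G'$ constructed above gives $\operatorname{cd}(\Gamma)\le d(G)-d(G')$. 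Comparing the two yields $d(G)-d(H)\le d(G)-d(G')$, that is $d(G')\le d(H)$, which contradicts the assumed $d(G')>d(H)$ and proves the theorem.

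I expect the main obstacle to be the cohomological-dimension input, not the properness transfer. The subtlety is that $G/H'$ is not contractible but only homotopy equivalent to the compact fibre $K/K'$ (indeed $G/H'$ is a $K$-equivariant vector bundle of rank $d(G)-d(H')$ over $K/K'$), so the identification of $\operatorname{cd}(\Gamma)$ with the non-compact dimension is not the naive closed-aspherical-manifold computation. It requires controlling $H^{\ast}(\Gamma)$ through the compactly supported cohomology along the non-compact directions of this bundle, and showing that the resulting bound is saturated precisely for cocompact actions. Making both the inequality and the equality rigorous, and uniformly across the two homogeneous spaces $G/H$ and $G/G'$, is where the real work lies; the remaining ingredient is only the elementary monotonicity of Cartan projections recorded in the second paragraph.
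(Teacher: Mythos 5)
Your argument is correct and is essentially the original proof of this theorem, which the paper does not reprove but imports from \cite{kob-d}: transfer proper discontinuity from $G/H$ to $G/G'$ using $\mathfrak{a}_{g'}\subseteq\mathfrak{a}_h$, then compare $\operatorname{cd}(\Gamma)=d(G)-d(H)$ (cocompact case, after Selberg) with $\operatorname{cd}(\Gamma)\le d(G)-d(G')$ (proper case) to force $d(G')\le d(H)$. One simplification worth noting: the properness transfer does not require the Benoist--Kobayashi Cartan-projection criterion for discrete subgroups (which historically postdates the cited theorem), since $\mathfrak{a}_{g'}\subseteq\mathfrak{a}_h$ already gives $G'\subseteq K\exp(\mathfrak{a}_{g'})K\subseteq K\exp(\mathfrak{a}_h)K\subseteq KHK$, so for any compact $S\subseteq G$ one has $\Gamma\cap SG'S\subseteq\Gamma\cap (SK)H(KS)$, which is finite by properness on $G/H$.
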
 

In this work we consider embeddings of real Lie algebras $\mathfrak{h}\subset\mathfrak{g}$ and their complexifications $\mathfrak{h}^c\subset\mathfrak{g}^c$. Also, if for an  embedding of complex Lie algebras $\mathfrak{h}^c\subset\mathfrak{g}^c$ there exist real forms $\mathfrak{h}$ of $\mathfrak{h}^c$ and $\mathfrak{g}\subset\mathfrak{g}^c$ such that $\mathfrak{h}\subset\mathfrak{g}$, we say that the embedding $\mathfrak{h}\subset\mathfrak{g}$ is the real form of the complex embedding $\mathfrak{h}^c\subset\mathfrak{g}^c$.

Given a root $\alpha$ and a root space $\mathfrak{g}_{\alpha}$ we denote by $t_{\alpha}\in \mathfrak{j}^{c}$ the  vector determined by the equality $B(t_{\alpha},h)=\alpha(h)$ for all $h\in\mathfrak{j}^c$. Set $h_{\alpha}=2t_{\alpha}/B(t_{\alpha},t_{\alpha}).$
\begin{definition}  {\rm A Chevalley basis of $\mathfrak{g}^c$ with respect to $\mathfrak{j}^c$ is a basis of $\mathfrak{g}^c$  consisting of $x_{\alpha}\in\mathfrak{g}_{\alpha}$ and $h_{\alpha}$ with the following properties:}
\begin{enumerate}
\item $[x_{\alpha}, x_{-\alpha}]=-h_{\alpha}, \,\forall \alpha\in\Delta$,
\item {\rm for each pair $\alpha,\beta\in\Delta$ such that $\alpha+\beta\in\Delta$ the constants $c_{\alpha,\beta}\in\mathbb{C}$ determined by $[x_{\alpha},x_{\beta}]=c_{\alpha,\beta}x_{\alpha+\beta}$ satisfy $c_{\alpha,\beta}=c_{-\alpha,-\beta}$.}
\end{enumerate}
\end{definition}
\noindent Throughout, we work with a special choice of the Chevalley basis compatible with a non-compact real form $\mathfrak{g}$ of $\mathfrak{g}^c$.

\begin{lemma}[\cite{botr}] For the chosen $\mathfrak{j}^c$ in $\mathfrak{g}^c$ there is a base of $\sum_{\alpha\in\Delta}\mathfrak{g}_{\alpha}$ of the form
 $$\{ \tilde{x}_{\alpha} \ | \ \tilde{x}_{\alpha}\in \mathfrak{g}_{\alpha}, \ \alpha\in \Delta  \}, $$ 
such that for every $\alpha\in\Delta$
	      $$\tau (\tilde{x}_{\alpha}) = \tilde{x}_{-\alpha}, \  \ B(\tilde{x}_{\alpha}, \tilde{x}_{-\alpha})=-1.$$
\label{lemtau}
\end{lemma}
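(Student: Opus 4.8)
The plan is to build the basis one root-pair at a time, using the compact conjugation $\tau$ to link $\mathfrak{g}_\alpha$ with $\mathfrak{g}_{-\alpha}$ and using the positive-definiteness of $B_\tau$ to fix the scaling.

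First I would establish that $\tau$ interchanges opposite root spaces, i.e. $\tau(\mathfrak{g}_\alpha)=\mathfrak{g}_{-\alpha}$ for every $\alpha\in\Delta$. Since $\mathfrak{t}\subset\mathfrak{k}\subset\mathfrak{g}_u$ and $i\mathfrak{a}\subset i\mathfrak{p}\subset\mathfrak{g}_u$, the intersection $\mathfrak{t}_u:=\mathfrak{j}^c\cap\mathfrak{g}_u$ equals $\mathfrak{t}+i\mathfrak{a}$, and $\tau$ fixes $\mathfrak{t}_u$ pointwise while acting by $-1$ on $i\mathfrak{t}_u$; on this compact torus every root takes purely imaginary values. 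For $X\in\mathfrak{g}_\alpha$ and $H\in\mathfrak{j}^c$, applying $\tau$ to the identity $[\tau H,X]=\alpha(\tau H)X$ and using $\tau^2=\mathrm{id}$ gives $[H,\tau X]=\overline{\alpha(\tau H)}\,\tau X$, so that $\tau X$ lies in the root space of the functional $H\mapsto\overline{\alpha(\tau H)}$. Evaluating this functional on $\mathfrak{t}_u$, where $\tau H=H$ and $\alpha(H)$ is imaginary so that $\overline{\alpha(H)}=-\alpha(H)$, identifies it with $-\alpha$, which proves $\tau(\mathfrak{g}_\alpha)=\mathfrak{g}_{-\alpha}$.

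With this in hand I would normalize. Fix a system of positive roots $\Delta^+$. For each $\alpha\in\Delta^+$ choose any nonzero $y_\alpha\in\mathfrak{g}_\alpha$; then $\tau y_\alpha\in\mathfrak{g}_{-\alpha}$, and
$$B(y_\alpha,\tau y_\alpha)=-B_\tau(y_\alpha,y_\alpha)$$
is a strictly negative real number because $B_\tau$ is Hermitian positive-definite. Writing $B(y_\alpha,\tau y_\alpha)=-c_\alpha$ with $c_\alpha>0$, set $\tilde{x}_\alpha:=c_\alpha^{-1/2}y_\alpha$ and define $\tilde{x}_{-\alpha}:=\tau(\tilde{x}_\alpha)=c_\alpha^{-1/2}\tau y_\alpha$. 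A direct computation then gives $B(\tilde{x}_\alpha,\tilde{x}_{-\alpha})=c_\alpha^{-1}B(y_\alpha,\tau y_\alpha)=-1$, and $\tau(\tilde{x}_\alpha)=\tilde{x}_{-\alpha}$ holds by construction. The remaining identities $\tau(\tilde{x}_{-\alpha})=\tilde{x}_\alpha$ and $B(\tilde{x}_{-\alpha},\tilde{x}_\alpha)=-1$ follow from $\tau^2=\mathrm{id}$ and from the symmetry of $B$, so both required conditions hold for all $\alpha\in\Delta$ once $\alpha$ ranges over $\Delta^+$ and the negatives are obtained by applying $\tau$.

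The only genuinely delicate point is the first step, the equality $\tau(\mathfrak{g}_\alpha)=\mathfrak{g}_{-\alpha}$, since it hinges on correctly pinning down the action of $\tau$ on $\mathfrak{j}^c$ and on the fact that roots are imaginary-valued on the compact torus $\mathfrak{t}_u$. The rescaling step is then routine, and the positivity $c_\alpha>0$ — which is precisely what makes the real normalization $\tilde{x}_{-\alpha}=\tau\tilde{x}_\alpha$ compatible with $B(\tilde{x}_\alpha,\tilde{x}_{-\alpha})=-1$ — is guaranteed by the positive-definiteness of $B_\tau$ recorded earlier in the Preliminaries.
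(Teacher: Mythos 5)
Your proof is correct: establishing $\tau(\mathfrak{g}_\alpha)=\mathfrak{g}_{-\alpha}$ from the action of $\tau$ on $\mathfrak{j}^c$ (using that roots are purely imaginary on the compact torus $\mathfrak{t}+i\mathfrak{a}$) and then rescaling each positive root vector via the positive-definiteness of $B_\tau$ is exactly the standard argument, and it is the one underlying the cited source — the paper itself only quotes this lemma from \cite{botr} without reproducing a proof. Both steps, including the positivity $c_\alpha>0$ that makes the normalization $B(\tilde{x}_\alpha,\tilde{x}_{-\alpha})=-1$ compatible with $\tilde{x}_{-\alpha}=\tau(\tilde{x}_\alpha)$, check out with no gaps.
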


\section{Some properties of  root systems}
In this subsection  we present  Lemma \ref{lemma:root-sum} (proved in \cite{botr}) which describes subsets of the root system consisting of roots simultaneously vanishing on a pair of vectors.  Lemma \ref{lemma:root-sum} is used in the proof of Lemma \ref{lemz} which will be essential in the proof of the main result. Let $\Delta\subset \mathbb{R}^{n}$ be an indecomposable root system in the Euclidean space $(\mathbb{R}^{n},(,))$  (\cite{ov}, Chapter 3). The system $\Delta$ is not assumed to be reduced. Let $\Delta^{+}\subset \Delta$ be a subset of positive roots and  $\Pi\subset \Delta^{+}$  the subset of simple roots. There is a unique maximal root $\beta \in \Delta^{+}$ such that for every $\alpha\in \Delta^{+}$  vector $\beta - \alpha$ is a combination of simple roots with non-negative coefficients.

\begin{lemma}[\cite{botr}]\label{lemma:root-sum}
Let $\Delta\subset\mathbb{R}^n$ be an indecomposable root system and  $X,H\in\mathbb{R}^{n}- \{ 0 \}$ be  non-zero vectors. Define 
$$C_{X}:= \{ \alpha\in\Delta \ | \ \alpha (X) = 0   \} , \ \ C_{H}:= \{ \alpha\in\Delta \ | \ \alpha (H) = 0   \} .$$
 Then $\Delta \neq C_{X} \cup C_{H}.$ 
\end{lemma}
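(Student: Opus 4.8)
The plan is to deduce the statement from a sharper and cleaner fact, stated for a single vector: if $\Delta\subset\mathbb{R}^n$ is an indecomposable root system spanning $\mathbb{R}^n$ and $X\neq 0$, then the set of roots \emph{not} vanishing on $X$,
$$B_X:=\{\alpha\in\Delta\mid \alpha(X)\neq 0\},$$
already spans $\mathbb{R}^n$. Granting this, the lemma is immediate: since $B_X$ spans and $H\neq 0$, the vector $H$ cannot be orthogonal to every element of $B_X$, so there is $\beta\in B_X$ with $\beta(H)\neq 0$. As $\beta(X)\neq 0$ by definition, $\beta\notin C_X\cup C_H$, whence $\Delta\neq C_X\cup C_H$. (Here I read $\alpha(X)$ as the inner product $(\alpha,X)$, and I use that an indecomposable root system spans its ambient space, so that $X$ and $H$ define nonzero functionals on $\operatorname{span}\Delta$.)

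To prove the spanning claim I would set $Q:=\operatorname{span}B_X$ and show that $Q$ is invariant under the full Weyl group $W=\langle s_\gamma\mid\gamma\in\Delta\rangle$, treating the two types of reflection separately. First, if $\gamma\in\Delta$ satisfies $\gamma(X)\neq 0$, then $\gamma\in B_X\subset Q$, so the reflection $s_\gamma(v)=v-\tfrac{2(v,\gamma)}{(\gamma,\gamma)}\gamma$ maps $Q$ into $Q$ for the trivial reason that it only adds multiples of a vector of $Q$; thus $s_\gamma$ preserves $Q$. Second, if $\gamma(X)=0$, then $s_\gamma$ fixes $X$, and hence for every root $\alpha$ one has $(s_\gamma\alpha)(X)=(\alpha,s_\gamma X)=(\alpha,X)=\alpha(X)$; therefore $s_\gamma$ permutes the set $B_X$ and so preserves its span $Q$. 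Since every reflection preserves $Q$, the subspace $Q$ is $W$-invariant, and $Q\neq 0$ because $B_X\neq\emptyset$ (not all roots can vanish on $X\neq 0$, as $\Delta$ spans). Finally, the Weyl group of an indecomposable root system acts irreducibly on the span of the roots, so a nonzero invariant subspace must be everything: $Q=\mathbb{R}^n$.

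I expect the main obstacle to be the asymmetry between the two reflection cases: the reflections in roots \emph{off} $X^\perp$ are easy because such roots lie in $Q$, but the reflections in roots \emph{on} $X^\perp$ require the observation that they fix $X$ and therefore permute $B_X$ setwise rather than merely preserving $\Delta$. Getting this bookkeeping exactly right — and citing the irreducibility of the Weyl representation for an indecomposable $\Delta$ — is the crux; everything else is formal. This route has the advantage of using neither the simple system $\Pi$ nor the highest root, reducing the whole statement to a one–vector spanning property plus irreducibility of the Weyl action.
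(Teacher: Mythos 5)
Your proof is correct, and it takes a genuinely different route from the one the paper points to. The reduction is sound (if $\Delta=C_X\cup C_H$ then $B_X\subset C_H$, so once $\operatorname{span}B_X=\mathbb{R}^n$ you get $H=0$), and the invariance argument is airtight: reflections $s_\gamma$ with $(\gamma,X)\neq 0$ preserve $Q=\operatorname{span}B_X$ because $\gamma\in Q$, while those with $(\gamma,X)=0$ fix $X$ and therefore permute $B_X$ setwise; since these reflections generate $W$, the subspace $Q$ is $W$-invariant and nonzero, hence equals $\mathbb{R}^n$ by irreducibility of the Weyl representation of an indecomposable system. Note that the paper itself does not reproduce a proof --- the lemma is quoted from \cite{botr} --- but the notation it installs immediately before the statement (a positive system $\Delta^+$, the simple roots $\Pi$, and the unique maximal root $\beta$) indicates that the intended argument runs through highest-root/simple-root combinatorics, presumably exhibiting an explicit root outside $C_X\cup C_H$. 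Your argument uses none of that apparatus: it trades the combinatorics for one standard structural fact (irreducibility of the $W$-action, Bourbaki Ch.~VI, \S 1), which makes it shorter, independent of any choice of positive system, and --- worth noting, since the paper explicitly allows non-reduced systems --- valid verbatim for $BC_n$-type systems, where highest-root bookkeeping needs a little extra care; your spanning claim about $B_X$ is also a sharper, reusable statement. Conversely, a maximal-root proof is more elementary in that it invokes no representation-theoretic irreducibility and is constructive, producing a concrete root vanishing on neither $X$ nor $H$. One shared caveat, which you correctly flag: the statement as literally written needs $\Delta$ to span $\mathbb{R}^n$ (equivalently, $X$ and $H$ must be read as functionals on $\operatorname{span}\Delta$), since otherwise $X=H$ orthogonal to $\operatorname{span}\Delta$ is a counterexample; this is the intended reading and is how the lemma is used in the proof of Lemma \ref{lemz}.
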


The following Lemma was proved in \cite{botr}, but it is so essential, that we repeat the proof here.
\begin{lemma}
None of the non-zero root vectors $x_{\beta}$ for  $\beta\in \Delta_{0}-\Delta_{m}$   centralizes $Z+\tau (Z) .$ 
\label{lemz}
\end{lemma}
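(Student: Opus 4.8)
The plan is to show that every root vector $x_\beta$ with $\beta\in\Delta_0-\Delta_m$ fails to centralize $Z+\tau(Z)$, where $Z=\sum_{\alpha\in\Delta_p^+\cup\Delta_n^+}\mathfrak{g}_\alpha$. First I would unpack the relevant definitions: a root $\beta\in\Delta_0$ satisfies $\beta(iX)=0$ but $\beta\notin\Delta_m$ (so $\beta|_{\mathfrak{a}}\neq 0$), while the roots $\alpha$ appearing in $Z$ satisfy $\alpha(iX)\neq 0$. The centralizer condition $[x_\beta, x_\alpha]=0$ for a root vector means that $\beta+\alpha\notin\Delta\cup\{0\}$ (using that $[x_\beta,x_\alpha]=c_{\beta,\alpha}x_{\beta+\alpha}$ is a nonzero multiple whenever $\beta+\alpha$ is a root, and that $\beta+\alpha=0$ would force $x_\alpha\in\mathfrak{g}_{-\beta}$, contradicting $\beta\in\Delta_0$, $\alpha\notin\Delta_0$). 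So I would reformulate: I must find, for each such $\beta$, some $\alpha\in\Delta_p^+\cup\Delta_n^+$ (or its $\tau$-image, a negative root of the same type) such that $\beta+\alpha\in\Delta$.

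The key arithmetic observation is how the three labels interact under addition. Since $\beta(iX)=0$ and $\alpha(iX)\neq 0$ for $\alpha\in\Delta_p^+\cup\Delta_n^+$, we get $(\beta+\alpha)(iX)=\alpha(iX)\neq 0$, so $\beta+\alpha$ (if a root) again lies in $\Delta_p^{\pm}\cup\Delta_n^{\pm}$, i.e.\ it is again of the "moving" type and hence contributes to $Z+\tau(Z)$. This is what makes the statement natural: adding a $\Delta_0$-root preserves membership in the moving part. The heart of the argument is therefore to guarantee that at least one such sum is an actual root. Here I would invoke the standard theory of $\beta$-strings: for $\beta\in\Delta_0-\Delta_m$ I want to exhibit a root $\alpha$ of moving type with $(\beta,\alpha)<0$, which forces $\beta+\alpha\in\Delta$. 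The cleanest route is to argue by contradiction via Lemma~\ref{lemma:root-sum}: suppose $x_\beta$ does centralize $Z+\tau(Z)$. Then $\beta+\alpha\notin\Delta$ for every moving root $\alpha$, and by the string characterization this forces $(\beta,\alpha)\leq 0$ to fail in a controlled way — more precisely it constrains $\beta$ to be orthogonal to, or to lower all, the moving roots.

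The way I would actually deploy Lemma~\ref{lemma:root-sum} is to set $H=\beta^\vee$ (or the coroot vector $t_\beta$) and use $X$ (the distinguished vector from Theorem~\ref{thm:root-decomp}) as the second vector. If $x_\beta$ centralized all of $Z+\tau(Z)$, I would argue that every moving root $\alpha$ satisfies $(\beta,\alpha)=0$, i.e.\ lies in $C_{t_\beta}$, while every root in $\Delta_m\cup\Delta_0$ lies in $C_X$ (since those vanish on $iX$); combined with $\beta\in\Delta_0$ itself, this would exhibit $\Delta=C_X\cup C_{t_\beta}$, contradicting the indecomposability conclusion $\Delta\neq C_X\cup C_H$ of Lemma~\ref{lemma:root-sum}. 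The main obstacle I anticipate is the bookkeeping needed to show that centralizing forces genuine $B$-orthogonality rather than merely $\beta+\alpha\notin\Delta$: a priori a root could be "blocked" on one side of its string without $\beta$ being orthogonal to it, so I would need to use the $\tau$-symmetry (both $\alpha$ and $-\alpha$-type vectors sit in $Z+\tau(Z)$) to examine the string on both ends and conclude that the full string through $\alpha$ in the $\beta$ direction has length one, which does yield $(\beta,\alpha)=0$. Handling the possibly non-reduced restricted system and the distinction between $\beta\in\Delta_0^+$ versus $\Delta_0^-$ are the remaining technical points, but they should not change the structure of the argument.
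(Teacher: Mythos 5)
Your first step is sound, and it is genuinely different from the paper's: from $[x_\beta,x_\alpha]=0$ for \emph{all} $\alpha\in\Delta_p^{\pm}\cup\Delta_n^{\pm}$ (both signs are indeed available, since $Z+\tau(Z)=\sum_{\alpha\in\Delta_p^{\pm}\cup\Delta_n^{\pm}}\mathfrak{g}_\alpha$, and $\beta\neq\pm\alpha$ because $\beta\in\Delta_0$), the string property of the reduced system $\Delta$ gives $(\beta,\alpha)\geq 0$ and $(\beta,-\alpha)\geq 0$, hence $(\beta,\alpha)=0$ for every moving root $\alpha$. The paper reaches an analogous point by heavier machinery: it invokes the structure of centralizers of nilpotent elements (\cite{col}, Lemma 3.4.3), placing $\mathfrak{g}^{x_\beta}$ in the non-negative eigenspaces of $\operatorname{ad}H_\beta$, and then produces a moving root with $\alpha(H_\beta)<0$. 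Your root-string route is more elementary. (Your worry about non-reduced systems is moot: the whole argument lives in $\Delta$, not in the restricted system $\Sigma$.)

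The gap is in your final step: the claim that every root of $\Delta_m$ lies in $C_X$ ``since those vanish on $iX$'' is false. $\Delta_m$ is defined by $\alpha|_{\mathfrak{a}}=0$, while $X$ lies in $\mathfrak{t}\subset\mathfrak{m}_0$, so roots of $\Delta_m$ have no reason to vanish on $iX$; indeed Lemma \ref{lemma:Phi} of the paper explicitly treats the case $\alpha(iX)\neq 0$ for some $\alpha\in\Delta_m$. Hence the covering $\Delta=C_X\cup C_{t_\beta}$ is not established (roots of $\Delta_m$ may lie in neither hyperplane), and Lemma \ref{lemma:root-sum} cannot be applied to the pair $(X,t_\beta)$ as you propose. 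The repair is exactly the maneuver in the paper's proof: replace $t_\beta$ by its $\mathfrak{a}$-component $H=\frac{1}{2}\bigl(t_\beta+\sigma(t_\beta)\bigr)$, which is nonzero precisely because $\beta\notin\Delta_m$ (on $\mathfrak{a}$ one has $\beta(\cdot)=B(H,\cdot)$, as $B(\mathfrak{t},\mathfrak{a})=0$). The set of moving roots is $\sigma$-stable, since $\sigma(\alpha)$ agrees with $\alpha$ on $\mathfrak{a}$ and is negated on $i\mathfrak{t}$, in particular $\sigma(\alpha)(iX)=-\alpha(iX)\neq 0$. So your orthogonality relation, applied to both $\alpha$ and $\sigma(\alpha)$, yields $2\alpha(H)=\alpha(t_\beta)+\alpha(\sigma(t_\beta))=(\beta,\alpha)+(\beta,\sigma(\alpha))=0$. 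Now every moving root lies in $C_H$, every root of $\Delta_m$ lies in $C_H$ for free (such roots vanish on all of $\mathfrak{a}$), and $\Delta_0\subset C_X$; thus $\Delta=C_X\cup C_H$, contradicting Lemma \ref{lemma:root-sum}. With this substitution your argument closes, and it then differs from the paper's proof only in replacing the nilpotent-centralizer step by the root-string computation.
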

\begin{proof}
Assume there exists $x_{\beta},\beta\in\Delta_0$ such that  $[x_{\beta},Z+\tau(Z)]=0$.  Take $H_{\beta}\in i\mathfrak{t}+\mathfrak{a}$, defined, as usual by the condition $B(\tilde{H},H_{\beta})=\beta(\tilde{H})$ for any $\tilde{H}\in i\mathfrak{t}+\mathfrak{a}$.  Denote by $\mathfrak{g}^{x_{\beta}}$ the centralizer of $x_{\beta}$ in $\mathfrak{g}^c$. By \cite{col}, Lemma 3.4.3
$$\mathfrak{g}^{x_{\beta}} = \oplus_{j\geq 0} (\mathfrak{g}^c\cap \mathfrak{g}_{j}),$$
where $\mathfrak{g}_j$ denote the eigenspaces of $\operatorname{ad}\,H_{\beta}$ (which are integers $j\geq 0$). Write $H_{\beta}=iT+H$, where $T\in\mathfrak{t},H\in\mathfrak{a}$. Note that $H\not=0$, since $\beta\in\Delta_0$. We have
$$H=\frac{1}{2}(H_{\beta}+\sigma(H_{\beta})).$$
Define
 $$ C_X=\{\alpha\in\Delta\,|\alpha(X)=0\},\,C_H=\{\alpha\in\Delta\,\,|\,\alpha(H)=0\}.$$
Consider the decomposition
$$\Delta=\Delta_{m}\cup\Delta_0\cup\Delta_p^{\pm}\cup\Delta_n^{\pm}.$$
 Note that for any $\mu\in\Delta_{m}$, $\mu(H)=0$. Assume that 
\begin{equation}
\Delta_p^{\pm}\cup\Delta_n^{\pm} \subset C_H. \label{eq5}
\end{equation}
Since $\Delta_0\subset C_X$, and all roots from $\Delta_{m}$ vanish on $H,$ equality (\ref{eq5}) would imply  
$$\Delta=C_X\cup C_H,$$
but this contradicts Lemma \ref{lemma:root-sum}. Thus there exists $\alpha,-\alpha\in\Delta_p^{\pm}\cup\Delta_n^{\pm}$ such that $\alpha(H)\not=0$. We have one of the following possibilities
\begin{enumerate}
	\item $\alpha(iT)\neq -\alpha(H).$ In this case (since one can choose between $\alpha$ and $-\alpha$) we may assume that $\alpha(H_{\beta})<0.$
	\item $\alpha(iT)= -\alpha(H).$ In this case we see that $\sigma (\alpha)(H_{\beta})=\alpha(-iT+H)=2\alpha(H)\neq 0$ and by Lemma \ref{sigma}, $\sigma (\alpha)\in \Delta_p^{\pm}\cup\Delta_n^{\pm}.$ In this case (since one can choose between $\sigma(\alpha)$ and $-\sigma(\alpha)$) we may assume that $\sigma(\alpha)(H_{\beta})<0.$
\end{enumerate}
Therefore in any case there exists $\alpha\in\Delta_p^{\pm}\cup\Delta_n^{\pm}$ such that $\alpha(H_{\beta})<0.$ But for any root vector $x_{\alpha}\in Z+\tau(Z)$ we have $\operatorname{ad}_{H_{\beta}}(x_{\alpha})=\alpha(H_{\beta})x_{\alpha}$ and so for $x_{\alpha}\in\mathfrak{g}^{x_{\beta}},$ $j=\alpha(H_{\beta})$. This is a contradiction with the inequality $j\geq 0$. The proof of the Lemma is complete. Note that Lemma \ref{lemtau} is tacitly used here.
\end{proof}

\section{Proof of Theorem \ref{thm:class}: eliminating the regular case}\label{sec:regular}

\subsection{Assumptions}
In this section we consider the following triples $(\mathfrak{g},\mathfrak{h},\mathfrak{l})$:
\begin{enumerate}
	\item $\mathfrak{g}$ is an absolutely simple Lie algebra,
	\item $\mathfrak{h}, \mathfrak{l}$ are absolutely simple (of non-compact type) subalgebras of $\mathfrak{g}$ and the triple $(\mathfrak{g}, \mathfrak{h}, \mathfrak{l})$ gives a standard compact Clifford-Klein form,
	\item the triple is not contained in Table \ref{tttab} and thus there exists $0\neq X\in \mathfrak{m}_{0}$ orthogonal to $\mathfrak{h}+ \mathfrak{l}$,
	\item $\mathfrak{h}^{c}$ is regular in $\mathfrak{g}^{c} ,$ that is, some Cartan subalgebra of $\mathfrak{g}^{c}$ normalizes $\mathfrak{h}^{c}.$
\end{enumerate}
In this section we prove the "regular" part of Theorem \ref{thm:class}, that is, the following.
\begin{theorem}\label{thm:centr}
Assume that neither $G/H$ nor $H$ is compact. Under the assumptions of this section   the pair $(\mathfrak{g}, \mathfrak{h})$ cannot admit compact standard Clifford-Klein forms.
\end{theorem}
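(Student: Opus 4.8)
The plan is to argue by contradiction: assuming that under the standing assumptions (1)--(4) a standard compact Clifford-Klein form exists with $\mathfrak{g}\neq\mathfrak{h}+\mathfrak{l}$, I would play the regularity of $\mathfrak{h}^c$ against the mixed root-space structure forced by Theorem \ref{thm:root-decomp}. Concretely, Theorem \ref{thm:root-decomp} furnishes a nonzero $X\in\mathfrak{t}$ that is $B$-orthogonal to $\mathfrak{h}+\mathfrak{l}$, together with a basis of $Z_h=\pi(\mathfrak{n}_h^c)$ whose elements have the form $S_h^i=x_{\alpha_i}+\sum_l a^i_{k+l}x_{\alpha_{k+l}}$ with a single leading term $x_{\alpha_i}$, $\alpha_i\in\Delta_p^{+}$ (so $\alpha_i(iX)>0$), and all remaining terms coming from $\Delta_n^{+}$ (so $\alpha_{k+l}(iX)<0$). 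Since $H$ is non-compact, $\mathfrak{n}_h\neq 0$. I would keep in mind throughout that the $x_\cdot$ are root vectors with respect to the Iwasawa Cartan $\mathfrak{j}^c$.

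First I would unpack regularity. Since $\mathfrak{h}^c$ is regular it is normalized by some Cartan subalgebra $\mathfrak{c}$ of $\mathfrak{g}^c$, which I would choose to contain the maximally split $\theta$-stable Cartan $\mathfrak{j}_h^c=(\mathfrak{t}_h+\mathfrak{a}_h)^c$ of $\mathfrak{h}^c$; then $\mathfrak{h}^c=\mathfrak{j}_h^c\oplus\bigoplus_{\alpha\in\Delta_h}\mathfrak{g}_\alpha^{\mathfrak{c}}$ over a closed symmetric subsystem $\Delta_h$, and in particular $t_\alpha\in\mathfrak{h}^c$ for every $\alpha\in\Delta_h$. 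The key numerical consequence of $X\perp\mathfrak{h}+\mathfrak{l}$ is then $B(X,t_\alpha)=0$, i.e.\ every root of $\mathfrak{h}^c$ vanishes on $X$. The second structural consequence is that, with respect to $\mathfrak{c}$, the nilradical $\mathfrak{n}_h^c$ is a sum of \emph{genuine} root spaces $\mathfrak{g}_\alpha^{\mathfrak{c}}$, with no mixing.

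The contradiction I aim for is to produce a genuine $\mathfrak{j}^c$-root vector $x_{\alpha_i}$ with $\alpha_i\in\Delta_p^{+}$ lying inside $\mathfrak{h}^c$: once this is done, $t_{\alpha_i}\in[\mathfrak{g}_{\alpha_i},\mathfrak{g}_{-\alpha_i}]\subset\mathfrak{h}^c$ forces $\alpha_i(X)=B(X,t_{\alpha_i})=0$ by the previous step, contradicting $\alpha_i(iX)>0$. To extract $x_{\alpha_i}$, I would decompose the element $\tilde S_h^i\in\mathfrak{n}_h^c\subset\mathfrak{h}^c$ lifting $S_h^i$ into $\operatorname{ad}(iX)$-eigencomponents: its component in the positive eigenvalue $\alpha_i(iX)$ is exactly $x_{\alpha_i}$. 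If $\operatorname{ad}(iX)$ preserves $\mathfrak{h}^c$ this component again lies in $\mathfrak{h}^c$ and we are done; more generally I would separate eigencomponents by a Vandermonde argument applied to iterated brackets $\operatorname{ad}(iX)^m\tilde S_h^i$, using that the eigenvalues appearing in $S_h^i$ are distinct, and use reductivity of $\mathfrak{h}^c$ in $\mathfrak{g}^c$ to keep the relevant pieces inside $\mathfrak{h}^c$. Here Lemma \ref{lemz} and Lemma \ref{lemma:root-sum} are available to exclude the degenerate configurations (in particular to guarantee $Z_h\neq 0$, so that some $S_h^i$ genuinely exists, and to rule out that a root vector of $\mathfrak{h}^c$ centralizes $Z+\tau(Z)$).

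The main obstacle, and where I expect the real work to lie, is the reconciliation of the two Cartan subalgebras: regularity only provides \emph{some} normalizing Cartan $\mathfrak{c}$, a priori unrelated to the Iwasawa Cartan $\mathfrak{j}^c$ and to the grading element $X$, whereas the mixing in $S_h^i$ is a statement about $\mathfrak{j}^c$-root spaces. Equivalently, the crux is to control the interaction of $\operatorname{ad}(iX)$ with $\mathfrak{h}^c$, i.e.\ to show that the genuine mixing produced by Theorem \ref{thm:root-decomp} cannot be absorbed by a Cartan-normalized subalgebra that is simultaneously $B$-orthogonal to $X$. Once $\operatorname{ad}(iX)$ is shown to respect the regular structure, so that a $\Delta_p^{+}$-eigencomponent of a nilpotent element of $\mathfrak{h}^c$ stays in $\mathfrak{h}^c$, the contradiction is immediate; isolating this compatibility, and ruling out the borderline case in which every root of $\mathfrak{h}^c$ already vanishes on $X$ (which would force $Z_h=0$ and shift the burden onto $\mathfrak{l}$), is the delicate part of the argument.
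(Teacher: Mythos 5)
Your overall target is sound: producing a genuine $\mathfrak{j}^c$-root vector $x_{\alpha_i}$, $\alpha_i\in\Delta_p^{+}$, inside $\mathfrak{h}^c$ would indeed finish the proof (then $x_{-\alpha_i}=\tau(x_{\alpha_i})\in\mathfrak{h}^c$ since $\mathfrak{h}^c$ is $\sigma$- and $\theta$-stable, so $t_{\alpha_i}\in\mathfrak{h}^c$ and $\alpha_i(X)=B(X,t_{\alpha_i})=0$, contradicting $\alpha_i(iX)>0$). But the mechanism you propose for producing it is circular, and this is a genuine gap, not a deferrable technicality. The Vandermonde/iterated-bracket argument extracts the $\operatorname{ad}(iX)$-eigencomponents of $\tilde S_h^i$ as linear combinations of $\operatorname{ad}(iX)^m\tilde S_h^i$; these iterates lie in $\mathfrak{h}^c$ only if $\operatorname{ad}(iX)(\mathfrak{h}^c)\subset\mathfrak{h}^c$, which is exactly what is not known -- orthogonality $X\perp_B(\mathfrak{h}+\mathfrak{l})$ gives no bracket-invariance, and ``reductivity of $\mathfrak{h}^c$ in $\mathfrak{g}^c$'' does not supply it. Worse, in the invariant case no Vandermonde is needed at all: if $[iX,\mathfrak{h}^c]\subset\mathfrak{h}^c$, then invariance of $B$ together with $X\perp\mathfrak{h}$ forces $[iX,\mathfrak{h}]=0$, and applying $\operatorname{ad}(iX)$ to $S_h^i+Q_i$ kills the theorem in one line. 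So your proposal resolves only the trivial case and leaves open precisely the hard one; indeed the whole content of Theorem \ref{thm:root-decomp} is that elements of $\mathfrak{n}_h^c$ mix root spaces across $\Delta_p^{+}$, $\Delta_n^{+}$ and $\Delta_0$ \emph{without} their individual components lying in $\mathfrak{h}^c$. You acknowledge this (``the delicate part''), but acknowledging the crux is not closing it. There is also a smaller imprecision: ``every root of $\mathfrak{h}^c$ vanishes on $X$'' is ill-posed, since those roots are taken with respect to the normalizing Cartan $\mathfrak{c}$, and $X$ need not lie in $\mathfrak{c}$; $B(X,t_\alpha)=0$ only translates to $\alpha(X)=0$ when $X$ belongs to the Cartan defining $t_\alpha$.

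The paper's proof circumvents exactly this obstacle by never trying to place root vectors inside $\mathfrak{h}^c$. Instead, the bulk of Section \ref{sec:regular} (Lemmas \ref{lemma:invar}--\ref{lemma:y}, Proposition \ref{prop:centralizer}) constructs an auxiliary element $Y$ with $Y\in\mathfrak{z}_{\mathfrak{g}}(\mathfrak{h})$, $Y\notin\mathfrak{m}_0$, $[Y,iX]=0$: regularity is used to convert ``$Y$ normalizes $\mathfrak{h}^c$ and $Y\perp_B\mathfrak{h}$'' into ``$Y$ centralizes $\mathfrak{h}$'', while the commutation with $iX$ comes from the ideal decomposition $\mathfrak{d}^c=\mathfrak{d}_1\oplus\mathfrak{d}_2\oplus\mathfrak{a}_h^c$ of the centralizer of $\mathfrak{a}_h$ (with $iX\in\mathfrak{d}_1$, $Y\in\mathfrak{d}_2$), and $Y\notin\mathfrak{m}_0$ is arranged via the vector $x_\alpha+\sigma(x_\alpha)+\theta(x_\alpha+\sigma(x_\alpha))$, $\alpha\in\Phi$, and a conjugation inside $D_2^\sigma$. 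Then the eigencomponent separation you want becomes legitimate, but for a different reason: $\operatorname{ad}(Y)$ commutes with $\operatorname{ad}(iX)$ and $\operatorname{ad}(\mathfrak{a}_h)$, hence preserves each of $Z_p$, $Z_n$, $O$, so the single equation $[Y,S_h^i+Q_i]=0$ splits componentwise into $[Y,x_{\alpha_i}]=0$, with no claim that $x_{\alpha_i}\in\mathfrak{h}^c$. Lemma \ref{lemz} then forces $Y$ to centralize $\mathfrak{n}$, i.e.\ $Y\in\mathfrak{m}_0$, the desired contradiction. In short: the separating operator must centralize $\mathfrak{h}$, not be the grading element $iX$ itself, and constructing it is the actual work your proposal omits.
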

\subsection{Plan of the proof}
The plan of the proof  is as follows.
\begin{enumerate}
\item The assumptions that $(\mathfrak{g},\mathfrak{h},\mathfrak{l})$ induces a compact standard Clifford-Klein form and that there exists $0\neq X\not\in \mathfrak{h}+\mathfrak{l}$ yield the conclusions of Theorem \ref{thm:root-decomp}, that is, some particular properties of the spaces $Z_h$ and $Z_l$ with  bases $S_h^i$ and $S_l^i$ (statement 1),  the relation between $S_h^i$ and $\Sigma_h$ ( statement 2), and the form of the base of the complexified real root spaces $\mathfrak{h}_{\gamma}^c$ (statement 3).
\item The assumption of the regularity of $\mathfrak{h}^c$ together with the assumption $X\not\in\mathfrak{h}+\mathfrak{l}$ yields the existence of a Cartan subalgebra $\mathfrak{c}\subset\mathfrak{g}^c$  such that $\mathfrak{a}_h\subset\mathfrak{c}$ and an element $0\not=Y\in\mathfrak{c}$ such that $Y\not\in\mathfrak{m}_0$, but $Y\in\mathfrak{z}_{\mathfrak{g}}(\mathfrak{h})$ and $[Y,iX]=0$.
\item Statements 1) and 2) are contradictory, and, therefore, $\mathfrak{g}=\mathfrak{h}+\mathfrak{l}$, as required.
\end{enumerate}

\subsection{Deriving the contradiction between 1) and 2)}
Let us show first, how to derive the required contradiction. Put
$$Z_p=\sum_{\alpha\in\Delta_p^{\pm}}\mathfrak{g}_{\alpha}, Z_n=\sum_{\alpha\in\Delta_n^{\pm}}\mathfrak{g}_{\alpha}, O=\sum_{\alpha\Delta_0}\mathfrak{g}_{\alpha}.$$
By 2), 
\begin{equation}\label{eq:bracket}
[Y,iX]=0, [Y,\mathfrak{a}_h]=0.
\end{equation}
 This implies
$$[Y,Z_n]\subset Z_n, [Y, Z_p]\subset Z_p, [Y,O]\not\subset (Z_p+Z_n)\setminus \{ 0 \} .$$
Indeed, $\mathfrak{g}_{\alpha} \subset Z_p$ if and only if $\alpha(iX)>0$ and $\alpha(A^+)>0$ for any $A^+$ from the interior of the positive Weyl chamber in $\mathfrak{a}_h$ (we consider Weyl chambers in $\mathfrak{a}_h$). Let $U\in\mathfrak{g}_{\alpha}\subset Z_p$. The Jacobi identity and (\ref{eq:bracket}) imply 
$$[iX,[Y,U]]=\alpha(iX)[Y,U],\,\text{and}\,\,[iX,[Y,A^+]]=\alpha(A^+)[Y,U].$$
Thus, $[Y,U]$ is contained in a sum of root spaces determined by roots which are positive on $iX$ and on $A^+$, as required. Similar arguments work for $Z_n$ and $O$. 

Since $[Y,\mathfrak{h}^c]=0$, by Theorem \ref{thm:root-decomp} (statements 2 and   3), $[Y,S_h^i+Q_i]=0$ for all $i=1,...,k$. Theorem \ref{thm:root-decomp} (statement 1) implies 
$$[Y,x_{\alpha_i}]=0,\,\,\text{for all}\,\,\alpha_i\in\Delta_p^{\pm}.$$
 It follows that $[Y,\sigma(x_{\alpha_i})]=0$ as well, and, therefore, $[Y,x_{\alpha_j}]=0$ for any $\alpha_j\in\Delta^{\pm}_n$. Lemma \ref{lemz} yields $[Y,\mathfrak{a}]=0$ which contradicts  2), that is, the assumption $Y\not\in\mathfrak{m}_0$.

\subsection{Constructing $Y\in\mathfrak{c},Y\not\in\mathfrak{m}_0,Y\in\mathfrak{z}_{\mathfrak{g}}(\mathfrak{h}),[Y,iX]=0$}\label{subsect:y}
We will construct the required $Y$ in the centralizer $\mathfrak{z}_{\mathfrak{g}^c}(\mathfrak{a}_h)$.
\subsubsection{The structure of $\mathfrak{z}_{\mathfrak{g}^c}(\mathfrak{a}_{h})$}
One easily sees that
$$\mathfrak{z}_{\mathfrak{g}^c}(\mathfrak{a}_h)=\mathfrak{d}^c=\mathfrak{m}_0+(\mathfrak{a}_h^{\perp})^c+\mathfrak{a}_{h}+\sum_{\alpha\in\Delta_0,\alpha|_{\mathfrak{a}_{h}}=0}\mathfrak{g}_{\alpha},$$
where $(\mathfrak{a}_h^{\perp})^c$ is the complexification of the orthogonal complement to $\mathfrak{a}_{h}$ in $\mathfrak{a}$.
This is a reductive subalgebra in $\mathfrak{g}^c$. Since $\mathfrak{h}^c$ is regular, there exists a Cartan subalgebra $\mathfrak{c}\subset\mathfrak{g}^c$ such that $[\mathfrak{c},\mathfrak{h}^c]\subset\mathfrak{h}^c.$ Clearly, $\mathfrak{h}^c\cap\mathfrak{c}$ is a Cartan subalgebra in $\mathfrak{h}^c$. Since any two Cartan subalgebras in $\mathfrak{h}^c$ are conjugate by elements $\operatorname{Ad}(h)$ (where $h$ is an element in the connected subgroup $H^c\subset G^c$ corresponding to $\mathfrak{h}^c$), any Cartan subalgebra of $\mathfrak{h}^c$ is of the form $\operatorname{Ad}(h)(\mathfrak{c})\cap\mathfrak{h}^c$. It easily follows that without loss of generality one may assume that $\mathfrak{a}_h^c\subset\mathfrak{c}$. Since $\mathfrak{d}^c$ centralizes $\mathfrak{a}_h$ one finally obtains 
\begin{equation}
\mathfrak{a}_h^c\subset\mathfrak{c}\subset\mathfrak{d}^c.
\end{equation}
\begin{lemma}\label{lemma:invar} The Cartan subalgebra $\mathfrak{c}$ can be chosen in a way to satisfy $(4)$ and to be $\sigma$- and $\theta$-invariant.
\end{lemma}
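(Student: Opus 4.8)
The plan is to build $\mathfrak{c}$ as a direct sum of a Cartan subalgebra of $\mathfrak{h}^c$ and one of the centralizer $\mathfrak{z}:=\mathfrak{z}_{\mathfrak{g}^c}(\mathfrak{h}^c)$, choosing each summand to be real and $\theta$-stable so that $\sigma$- and $\theta$-invariance comes for free. First I would record the structural facts that make this work. Since $\theta(\mathfrak{h})=\mathfrak{h}$ by Lemma \ref{lemma:nh+nl} and $\mathfrak{h}\subset\mathfrak{g}$ is real (fixed by $\sigma$), both $\mathfrak{h}^c$ and $\mathfrak{z}$ are $\sigma$- and $\theta$-stable; likewise $\mathfrak{a}_h\subset\mathfrak{p}$ is real and is fixed setwise by $\theta$ (as $\theta=-1$ on $\mathfrak{p}$), so $\mathfrak{d}^c=\mathfrak{z}_{\mathfrak{g}^c}(\mathfrak{a}_h)$ is $\sigma$- and $\theta$-stable as well. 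Because $\mathfrak{h}^c$ is absolutely simple, hence semisimple, its normalizer splits as a direct sum of ideals $\mathfrak{n}_{\mathfrak{g}^c}(\mathfrak{h}^c)=\mathfrak{h}^c\oplus\mathfrak{z}$, and both ideals are $\sigma$- and $\theta$-stable. A Cartan subalgebra of $\mathfrak{g}^c$ normalizes $\mathfrak{h}^c$ exactly when it lies in $\mathfrak{n}_{\mathfrak{g}^c}(\mathfrak{h}^c)$, so my candidates for $\mathfrak{c}$ are sums $\mathfrak{c}_h\oplus\mathfrak{c}_z$ with $\mathfrak{c}_h$ a Cartan subalgebra of $\mathfrak{h}^c$ and $\mathfrak{c}_z$ a Cartan subalgebra of $\mathfrak{z}$.

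Next I would produce the two summands with the desired symmetry. For $\mathfrak{h}$ I take the maximally split $\theta$-stable real Cartan subalgebra $\mathfrak{j}_h=\mathfrak{a}_h+\mathfrak{t}_h$, where $\mathfrak{t}_h$ is maximal abelian in $\mathfrak{z}_{\mathfrak{k}_h}(\mathfrak{a}_h)$, and set $\mathfrak{c}_h=\mathfrak{j}_h^c$; this is a Cartan subalgebra of $\mathfrak{h}^c$ containing $\mathfrak{a}_h^c$, and being the complexification of a real $\theta$-stable subalgebra it is automatically $\sigma$- and $\theta$-invariant. For $\mathfrak{z}$ I use that its real form $\mathfrak{z}\cap\mathfrak{g}=\mathfrak{z}_{\mathfrak{g}}(\mathfrak{h})$ is a $\theta$-stable reductive subalgebra of $\mathfrak{g}$, hence carries a $\theta$-stable real Cartan subalgebra whose complexification $\mathfrak{c}_z$ is again $\sigma$- and $\theta$-invariant. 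Put $\mathfrak{c}=\mathfrak{c}_h\oplus\mathfrak{c}_z$; the sum is direct and abelian since $\mathfrak{h}^c\cap\mathfrak{z}=0$ and $[\mathfrak{h}^c,\mathfrak{z}]=0$, and it is $\sigma$- and $\theta$-invariant as a sum of invariant pieces.

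It remains to check that this $\mathfrak{c}$ does everything required. Condition (4) holds because $\mathfrak{a}_h^c\subset\mathfrak{c}_h\subset\mathfrak{c}$, while $\mathfrak{c}\subset\mathfrak{d}^c$ since $\mathfrak{a}_h$ and $\mathfrak{t}_h\subset\mathfrak{z}_{\mathfrak{k}_h}(\mathfrak{a}_h)$ centralize $\mathfrak{a}_h$ and $\mathfrak{c}_z\subset\mathfrak{z}$ centralizes all of $\mathfrak{h}^c\supset\mathfrak{a}_h$. By construction $\mathfrak{c}\subset\mathfrak{n}_{\mathfrak{g}^c}(\mathfrak{h}^c)$, so $\mathfrak{c}$ still normalizes $\mathfrak{h}^c$, preserving the regularity exploited in \S\ref{subsect:y}. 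The one genuinely non-formal point, and the step I expect to be the main obstacle, is verifying that $\mathfrak{c}$ is a Cartan subalgebra of $\mathfrak{g}^c$ and not merely of $\mathfrak{n}_{\mathfrak{g}^c}(\mathfrak{h}^c)$. Here I would argue by rank: the elements of $\mathfrak{c}$ are semisimple in $\mathfrak{g}^c$ (semisimplicity inside the reductive subalgebras $\mathfrak{h}^c$ and $\mathfrak{z}$ is inherited by $\mathfrak{g}^c$), so $\mathfrak{c}$ is toral, and $\dim\mathfrak{c}=\operatorname{rank}\mathfrak{h}^c+\operatorname{rank}\mathfrak{z}=\operatorname{rank}\mathfrak{n}_{\mathfrak{g}^c}(\mathfrak{h}^c)$. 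The regularity of $\mathfrak{h}^c$, equivalently the Cartan subalgebra already produced in the discussion of (4) which lies inside $\mathfrak{n}_{\mathfrak{g}^c}(\mathfrak{h}^c)$, forces $\operatorname{rank}\mathfrak{n}_{\mathfrak{g}^c}(\mathfrak{h}^c)=\operatorname{rank}\mathfrak{g}^c$. Thus $\mathfrak{c}$ is a toral subalgebra of $\mathfrak{g}^c$ whose dimension equals the rank, hence a maximal toral and therefore a Cartan subalgebra of $\mathfrak{g}^c$, which completes the proof.
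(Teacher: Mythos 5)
Your proof is correct, but it takes a genuinely different route from the paper's. The paper keeps the Cartan subalgebra $\mathfrak{c}$ already furnished by regularity (which satisfies (4) after the conjugation discussed before the lemma) and repairs it: first it conjugates by an element of the centralizer of $\mathfrak{a}_h$ in $H^c$ so that $\mathfrak{c}\cap\mathfrak{h}^c$ becomes $\sigma$-stable, then it takes a $\sigma$-stable Cartan subalgebra of the normalizer of $\mathfrak{h}^c$ in $\mathfrak{d}^c$ containing $(\mathfrak{c}\cap\mathfrak{h}^c)^{\sigma}$, and finally a second conjugation, inside $G$, produces $\theta$-invariance. You instead discard the given $\mathfrak{c}$ and assemble a new one from real data: the ideal decomposition $\mathfrak{n}_{\mathfrak{g}^c}(\mathfrak{h}^c)=\mathfrak{h}^c\oplus\mathfrak{z}$ (valid because $\mathfrak{h}^c$ is semisimple, which is in force under the standing assumption of Section \ref{sec:regular} that $\mathfrak{h}$ is absolutely simple), complexifications of $\theta$-stable real Cartan subalgebras of $\mathfrak{h}$ and of $\mathfrak{z}_{\mathfrak{g}}(\mathfrak{h})$, and a rank count in which regularity enters exactly once. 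Your construction buys both invariances simultaneously and makes the role of regularity transparent, at the price of having to verify that the direct sum really is a Cartan subalgebra of $\mathfrak{g}^c$; the paper's argument avoids the normalizer decomposition altogether and stays close to the conjugation technique reused throughout the section (Lemma \ref{lemma:d2-good}, Corollary \ref{cor:ah}, Proposition \ref{prop:centralizer}). Two of your steps deserve one extra line each. The semisimplicity in $\mathfrak{g}^c$ of the elements of $\mathfrak{c}_z$ follows most cleanly from $\theta$-stability rather than from abstract reductivity of $\mathfrak{z}$: a $\theta$-stable abelian subalgebra of $\mathfrak{g}$ is spanned by its intersections with $\mathfrak{k}$ and $\mathfrak{p}$, and $\operatorname{ad}X$ is skew-symmetric with respect to the inner product $-B(\cdot,\theta\,\cdot)$ for $X\in\mathfrak{k}$ and symmetric for $X\in\mathfrak{p}$, hence semisimple in either case. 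And your rank count rests on conjugacy of Cartan subalgebras of the a priori non-reductive algebra $\mathfrak{n}_{\mathfrak{g}^c}(\mathfrak{h}^c)$ (Chevalley's theorem); granting that, you could even skip the toral-dimension argument entirely, since every Cartan subalgebra of $\mathfrak{n}_{\mathfrak{g}^c}(\mathfrak{h}^c)$ is conjugate, by an inner automorphism of $\mathfrak{g}^c$ preserving this normalizer, to the one provided by regularity, and is therefore itself a Cartan subalgebra of $\mathfrak{g}^c$.
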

\begin{proof}
Clearly,  $\sigma (\mathfrak{a}_{h}) = \mathfrak{a}_{h}$.  It follows that $\sigma (\mathfrak{d}^{c}) = \mathfrak{d}^{c} .$ Denote by $\mathfrak{d}^{\sigma}$ the real form of $\mathfrak{d}^{c}$ given by $\sigma .$ We see that $\mathfrak{d}^{\sigma}$ is the centralizer of $\mathfrak{a}_{h}$ in $\mathfrak{g} .$ Put $\mathfrak{c}_{h}:=\mathfrak{c}\cap \mathfrak{h}^{c}.$ After conjugating $\mathfrak{c}$ by an element of the centralizer of $\mathfrak{a}_{h}$ in $H^{c}$ we get $\sigma (\mathfrak{c}_{h}) = \mathfrak{c}_{h} .$ Therefore $\mathfrak{c}_{h}^{\sigma}$ (the real form of $\mathfrak{c}_{h}$ with respect to $\sigma$) is in $\mathfrak{d}^{\sigma} .$ Take a $\sigma$-stable Cartan subalgebra $\mathfrak{c}'$ of the normalizer of $\mathfrak{h}^{c}$ in $\mathfrak{d}^{c} $ containing $\mathfrak{c}_{h}^{\sigma} .$ One checks that $[\mathfrak{c}' , \mathfrak{h}^{c}]\subset \mathfrak{h}^{c} $ and $\sigma (\mathfrak{c}')\subset \mathfrak{c}',$ as desired. By a similar argument (conjugating $(\mathfrak{c}')^{\sigma}$ inside $G$) we get $\theta (\mathfrak{c}') = \mathfrak{c}'.$
\end{proof}

\begin{lemma}
The following equality is satisfied:
 $$\mathfrak{a}_{h}^{\perp} \cap \mathfrak{c}^{\sigma} =\{ 0 \}. $$ 

\label{lemaorto}
\end{lemma}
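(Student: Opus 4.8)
The plan is to take an arbitrary $W\in\mathfrak{a}_h^\perp\cap\mathfrak{c}^\sigma$ and prove $W=0$. Since $\mathfrak{a}_h^\perp\subseteq\mathfrak{a}\subseteq\mathfrak{p}\subseteq\mathfrak{g}$, membership in $\mathfrak{c}^\sigma=\mathfrak{c}\cap\mathfrak{g}$ reduces to $W\in\mathfrak{c}$, and moreover $\theta W=-W$ and $B(W,\mathfrak{a}_h)=0$. The idea is to split $W$ along the decomposition of $\mathfrak{c}$ induced by the regular subalgebra $\mathfrak{h}^c$ and then to play the Cartan involution off against the Killing form.

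First I would record the decomposition $\mathfrak{c}=\mathfrak{c}_h\oplus\mathfrak{c}''$, where $\mathfrak{c}_h=\mathfrak{c}\cap\mathfrak{h}^c$ is a Cartan subalgebra of $\mathfrak{h}^c$ and $\mathfrak{c}''=\mathfrak{c}\cap\mathfrak{z}_{\mathfrak{g}^c}(\mathfrak{h}^c)$. This holds because $\mathfrak{c}$ normalizes $\mathfrak{h}^c$ (regularity): each $U\in\mathfrak{c}$ acts on $\mathfrak{h}^c$ as $\operatorname{ad}(h_U)$ for a unique $h_U\in\mathfrak{c}_h$ (using that the centralizer of $\mathfrak{c}_h$ in $\mathfrak{h}^c$ is $\mathfrak{c}_h$), whence $U-h_U\in\mathfrak{c}''$. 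By Lemma \ref{lemma:invar} and the $\theta$-stability of $\mathfrak{h}^c$, all three subspaces are $\sigma$- and $\theta$-stable. Writing $W=W_1+W_2$ with $W_1\in\mathfrak{c}_h$ and $W_2\in\mathfrak{c}''$, the relation $\theta W=-W$ forces $\theta W_1=-W_1$ and $\theta W_2=-W_2$, so $W_1\in\mathfrak{c}_h^\sigma\cap\mathfrak{p}$ and $W_2\in(\mathfrak{c}'')^\sigma\cap\mathfrak{p}$. Since $\mathfrak{c}_h^\sigma$ is a $\theta$-stable Cartan subalgebra of $\mathfrak{h}$ containing $\mathfrak{a}_h$, and $\mathfrak{a}_h$ is maximal abelian in $\mathfrak{p}_h$, we get $\mathfrak{c}_h^\sigma\cap\mathfrak{p}=\mathfrak{a}_h$, hence $W_1\in\mathfrak{a}_h$; and $(\mathfrak{c}'')^\sigma\subseteq\mathfrak{z}_{\mathfrak{g}}(\mathfrak{h})$ gives $W_2\in\mathfrak{z}_{\mathfrak{g}}(\mathfrak{h})\cap\mathfrak{p}$.

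Next I would exploit the semisimplicity of $\mathfrak{h}$: because $\mathfrak{h}=[\mathfrak{h},\mathfrak{h}]$ and $B$ is invariant, $B(\mathfrak{h},\mathfrak{z}_{\mathfrak{g}}(\mathfrak{h}))=0$, and in particular $B(\mathfrak{a}_h,W_2)=0$. Combined with $B(W,\mathfrak{a}_h)=0$ this yields $B(W_1,\mathfrak{a}_h)=0$; since $W_1\in\mathfrak{a}_h$ and $B$ is positive definite on $\mathfrak{a}_h\subseteq\mathfrak{p}$, we conclude $W_1=0$. Therefore $W=W_2$ is a nonzero hyperbolic element of the maximal split torus $\mathfrak{a}$ that centralizes $\mathfrak{h}$ and is orthogonal to $\mathfrak{a}_h$; the lemma is thus equivalent to the statement that $\mathfrak{z}_{\mathfrak{g}}(\mathfrak{h})\cap\mathfrak{a}\cap\mathfrak{c}=\{0\}$.

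It remains to show that such a $W$ cannot exist, and this is the step where the standard Clifford–Klein hypothesis is genuinely indispensable: without it the conclusion fails, as for $\mathfrak{g}=\mathfrak{sl}(3,\mathbb{R})$, $\mathfrak{h}=\mathfrak{sl}(2,\mathbb{R})$ the element $\operatorname{diag}(1,1,-2)$ centralizes $\mathfrak{h}$, lies in $\mathfrak{a}$, and is orthogonal to $\mathfrak{a}_h$. A nonzero $W$ makes the centralizer $\mathfrak{z}_{\mathfrak{g}}(\mathfrak{h})$ non-compact, and the aim is to convert this into a contradiction with Kobayashi's non-existence theorem (Theorem \ref{thm:kob-d}) or, equivalently, with the dimension identity $d(\mathfrak{g})=d(\mathfrak{h})+d(\mathfrak{l})$ of Theorem \ref{koba} combined with the properness criterion. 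I expect this final step to be the main obstacle: the naive enlargement $\mathfrak{h}\oplus\mathbb{R}W$ pushes the real split rank out of $\mathfrak{a}_h$ and, through the Cartan-projection form of the properness criterion, destroys properness of the $\mathfrak{l}$-action (already at the identity of the little Weyl group, since $W$ has a nonzero component along $\mathfrak{a}_l$ in $\mathfrak{a}=\mathfrak{a}_h\oplus\mathfrak{a}_l$). Hence the contradiction must be extracted more delicately from the interaction of $W$ with the decomposition $\mathfrak{a}=\mathfrak{a}_h\oplus\mathfrak{a}_l$ and with the element $X\in\mathfrak{m}_0$ orthogonal to $\mathfrak{h}+\mathfrak{l}$, rather than by a direct application of Theorem \ref{thm:kob-d}.
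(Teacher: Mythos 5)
Your first half is sound: the reduction showing that any $W\in\mathfrak{a}_h^\perp\cap\mathfrak{c}^\sigma$ must centralize $\mathfrak{h}$ is correct (the paper gets the same intermediate fact more directly, by noting $[W,h]\in\mathfrak{h}$ from regularity, $[W,h]\perp\mathfrak{h}$ from invariance of $B$ and $W\perp\mathfrak{h}$, and nondegeneracy of $B|_{\mathfrak{h}}$; your decomposition $\mathfrak{c}=\mathfrak{c}_h\oplus\mathfrak{c}''$ is a valid alternative route). But the proof then stops exactly where the real content begins: you never show that a nonzero element of $\mathfrak{a}\cap\mathfrak{a}_h^\perp$ centralizing $\mathfrak{h}$ is impossible; you only declare this "the main obstacle" and speculate. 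That final step is the lemma, so this is a genuine gap, not a stylistic one.

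The missing idea is that the standing hypotheses of Section \ref{sec:regular} put Theorem \ref{thm:root-decomp} and Lemma \ref{lemz} at your disposal, and these close the gap by pure root-space bookkeeping rather than by properness or dimension counting. Since $W\in\mathfrak{a}$, $\operatorname{ad}(W)$ acts on each complex root space $\mathfrak{g}_\alpha$ as the scalar $\alpha(W)$. By Theorem \ref{thm:root-decomp} (statements 1 and 3), the complexified root spaces of $\mathfrak{h}$ are spanned by vectors $S_h^i+Q_i$ whose leading terms $x_{\alpha_i}$ run over \emph{all} roots $\alpha_i\in\Delta_p^{+}$, with the remaining terms supported on $\Delta_n^{+}\cup\Delta_0$; hence $[W,\mathfrak{h}]=0$ forces $\alpha(W)=0$ for every $\alpha\in\Delta_p^{\pm}$, and (applying $\sigma$) for every $\alpha\in\Delta_n^{\pm}$. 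Roots in $\Delta_m$ vanish on $\mathfrak{a}\ni W$ automatically, and Lemma \ref{lemz} shows each root of $\Delta_0$ is a sum of a root from $\Delta_p^{\pm}\cup\Delta_n^{\pm}$ and a root from $\Delta_p^{\pm}\cup\Delta_n^{\pm}\cup\Delta_m$, so $\alpha(W)=0$ for all $\alpha\in\Delta$. Thus $W$ centralizes $\mathfrak{n}$, which for $W\in\mathfrak{a}$ gives $W=0$. By contrast, the direction you propose cannot be carried out as stated: Theorem \ref{thm:kob-d} requires a subgroup $G'$ with $\mathfrak{a}_{g'}\subseteq\mathfrak{a}_h$, which fails for the enlargement $\mathfrak{h}\oplus\mathbb{R}W$ (its split part sticks out of $\mathfrak{a}_h$ by construction), and the existence of a hyperbolic element centralizing $\mathfrak{h}$ does not by itself contradict the properness criterion for $L$ acting on $G/H$ — your own example $\mathfrak{sl}(2,\mathbb{R})\subset\mathfrak{sl}(3,\mathbb{R})$ illustrates that the obstruction must come from the root-decomposition constraints specific to standard triples with $\mathfrak{g}\neq\mathfrak{h}+\mathfrak{l}$, i.e.\ precisely from Theorem \ref{thm:root-decomp} and Lemma \ref{lemz}.
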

\begin{proof}
Let $Y\in \mathfrak{a}_{h}^{\perp}\cap \mathfrak{c}^{\sigma} .$ For every $1\leq i\leq k$ we get the following: 
$$[Y,S_{h}^{i}+Q_{i}+\sigma (S_{h}^{i}+Q_{i})]=0 ,$$
$$[Y,i(S_{h}^{i}+Q_{i}-\sigma (S_{h}^{i}+Q_{i}))]=0,$$ 
 This easily follows from the fact that for any $i=1,...,k$, vectors 
$$S_h^i+Q_i+\sigma(S_h^i+Q_i)\,\,  \text{and}\,\, i(S_{h}^{i}+Q_{i}-\sigma (S_{h}^{i}+Q_{i})$$
 belong to $\mathfrak{h}$, and  the properties of the Killing form. Indeed, one can notice that $[Y,h]\perp \mathfrak{h}$ for any $h\in\mathfrak{h}$. On the other hand, $[Y,h]\in\mathfrak{h}$ since $Y$ normalizes $\mathfrak{h}$.  Now observe that  since $i$ takes all values from $1$ to $k$,  $Y$ normalizes all root vectors of $\Delta_{p}^{\pm}$ and $\Delta_{n}^{\pm} .$ By Lemma \ref{lemz} every root of $\Delta_{0}^{\pm}$ is of the form $\alpha_{1}+\alpha_{2}$ for some $\alpha_{1}\in \Delta_{p}^{\pm} \cup \Delta_{n}^{\pm} $ and $\alpha_{2}\in \Delta_{p}^{\pm} \cup \Delta_{n}^{\pm}\cup \Delta_{m} .$  This implies that $Y$ centralizes $\mathfrak{n}$ and so $Y=0.$
\end{proof}

  Consider the semisimple part of the reductive subalgebra $\mathfrak{d}^c$, that is 
$$\mathfrak{d}_{s}^{c}:=[\mathfrak{d}^{c},\mathfrak{d}^{c}].$$
Define 
$$\Phi=\{\alpha\in\Delta_0\,|\,\alpha|_{\mathfrak{a}_h}=0\}.$$
\begin{lemma}\label{lemma:Phi}
At least one of the following is valid:
\begin{enumerate}
	\item  $\mathfrak{d}^{c}_{s}$ is not simple,

	\item $iX$ is in the center of $\mathfrak{d}^{c} .$
\end{enumerate}
\end{lemma}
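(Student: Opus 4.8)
The plan is to read off the root-space structure of $\mathfrak{d}^c=\mathfrak{z}_{\mathfrak{g}^c}(\mathfrak{a}_h)$ and reduce statement $(2)$ to a statement about $\Delta_m$ alone. Since $\mathfrak{j}^c\subset\mathfrak{d}^c$, the algebra $\mathfrak{d}^c$ is reductive with Cartan subalgebra $\mathfrak{j}^c$ and, by the displayed description of $\mathfrak{d}^c$, its root system is exactly $R=\Delta_m\cup\Phi$. Hence its centre is $\{H\in\mathfrak{j}^c\mid\beta(H)=0\ \text{for all}\ \beta\in R\}$. Every $\phi\in\Phi\subset\Delta_0$ already satisfies $\phi(iX)=0$, and $iX\in i\mathfrak{t}\subset\mathfrak{j}^c$, so $iX$ lies in the centre of $\mathfrak{d}^c$ if and only if $\mu(iX)=0$ for every $\mu\in\Delta_m$. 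Thus statement $(2)$ fails precisely when the set $\Delta_m^{\neq}:=\{\mu\in\Delta_m\mid\mu(iX)\neq0\}$ is non-empty, and it is in this case that I must exhibit a non-simple $\mathfrak{d}_s^c$.

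The key computation is an orthogonality relation forced by the fact that $\Delta_m\cup\Phi$ is the \emph{full} root system of $\mathfrak{d}^c$. Fix $\mu\in\Delta_m^{\neq}$ and $\phi\in\Phi$. Each of $\mu+\phi$ and $\mu-\phi$ vanishes on $\mathfrak{a}_h$, is non-zero on $\mathfrak{a}_l$ (because $\mu|_{\mathfrak{a}}=0$ while $\phi|_{\mathfrak{a}_l}\neq0$), and takes the value $\mu(iX)\neq0$ on $iX$. Were $\mu\pm\phi$ a root of $\mathfrak{g}^c$, it would be a root of $\mathfrak{d}^c$ and hence lie in $\Delta_m\cup\Phi$; but its non-vanishing on $\mathfrak{a}$ excludes $\Delta_m$, and its non-vanishing on $iX$ excludes $\Phi\subset\Delta_0$. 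Therefore neither $\mu+\phi$ nor $\mu-\phi$ is a root, and standard root-string theory (recall $\mu\neq\pm\phi$ since $\mu\in\Delta_m$, $\phi\notin\Delta_m$) gives $(\mu,\phi)=0$. Hence $\Delta_m^{\neq}\perp\Phi$.

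To conclude that $\mathfrak{d}_s^c$ is not simple I would upgrade this to $\Delta_m^{\neq}\perp(\Delta_m^0\cup\Phi)$, where $\Delta_m^0:=\Delta_m\setminus\Delta_m^{\neq}$; then the subsystem generated by $\Delta_m^{\neq}$ would be a non-zero proper orthogonal summand of $R$, so $R$ would be reducible. For the remaining piece $\Delta_m^{\neq}\perp\Delta_m^0$ I would exploit well-embeddedness: since $X\in\mathfrak{t}$ is $B$-orthogonal to $\mathfrak{h}+\mathfrak{l}$, for any root $\mu$ of $\mathfrak{m}_{0,h}^c$ or $\mathfrak{m}_{0,l}^c$ the vector $t_\mu$ lies in $\mathfrak{h}^c$ or $\mathfrak{l}^c$, so $\mu(X)=B(t_\mu,X)=0$ and thus $\mu(iX)=0$. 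In other words all roots of $\mathfrak{m}_{0,h}^c$ and $\mathfrak{m}_{0,l}^c$ sit inside $\Delta_m^0$. Analysing the $\operatorname{ad}(iX)$-grading of the semisimple part of $\mathfrak{m}_0^c$ against this fact, I would try to show that its non-zero-grade roots $\Delta_m^{\neq}$ are supported on entire simple factors, i.e. orthogonal to the zero-grade roots $\Delta_m^0$.

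The main obstacle is exactly this last orthogonality $\Delta_m^{\neq}\perp\Delta_m^0$. The device $\mu\pm\phi\notin\Delta$ that produced $\Delta_m^{\neq}\perp\Phi$ has no analogue for $\mu\in\Delta_m^{\neq}$ and $\nu\in\Delta_m^0$, since $\mu+\nu$ may perfectly well be another root lying in $\Delta_m^{\neq}$; so I cannot rule it out by the same formal argument. Equivalently, I must exclude the possibility that an $\operatorname{ad}(iX)$-nonzero root of $\mathfrak{m}_0^c$ is linked, through the zero-grade block $\Delta_m^0$, to the $\Phi$-block, which would keep the Dynkin diagram of $R$ connected. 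I expect to settle this from the explicit structure of $\mathfrak{m}_0$ for a well-embedded triple together with the constraint $X\perp\mathfrak{h}+\mathfrak{l}$, and this structural step — rather than the root-string computation — is where the real work lies.
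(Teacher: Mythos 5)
Your reduction of statement (2) to the non-emptiness of $\Delta_m^{\neq}$ and your root-string argument giving $\Delta_m^{\neq}\perp\Phi$ are exactly the opening moves of the paper's proof (the paper phrases the latter as: for $\alpha\in\Delta_m$ with $\alpha(iX)\neq 0$ and $\beta\in\Phi$, the vectors $\alpha\pm\beta$ would have to lie in $\Delta_p^{\pm}\cup\Delta_n^{\pm}$ while restricting to zero on $\mathfrak{a}_h$, contradicting Theorem \ref{thm:root-decomp}). But your final step is a genuine gap, and moreover it aims at the wrong target. The orthogonality $\Delta_m^{\neq}\perp\Delta_m^0$ that you say remains to be proved is strictly stronger than the conclusion of the lemma and is not forced by the hypotheses: nothing in the setup prevents an irreducible component of the root system of $\mathfrak{d}_s^c$ lying entirely inside $\Delta_m$ and entirely orthogonal to $\Phi$ from containing an $A_2$-configuration $\{\pm\alpha,\pm\beta,\pm(\alpha+\beta)\}$ with $\alpha(iX)=0$ and $\beta(iX)\neq 0$; in that situation $\mathfrak{d}_s^c$ is indeed non-simple (so the lemma holds), yet $\Delta_m^{\neq}\not\perp\Delta_m^0$. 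So the "structural step" you defer is not merely hard --- it is a dead end, because non-simplicity does not require exhibiting that particular orthogonal splitting. A secondary gap: your argument tacitly needs $\Phi\neq\emptyset$ (otherwise there is no $\phi$ to pair against), and you never establish this; the paper gets it from Lemma \ref{lemaorto}.

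The missing idea is Lemma \ref{lemma:root-sum}, which turns what you have already proved into the full conclusion. Fix a single $\alpha_1\in\Phi$ (non-empty by Lemma \ref{lemaorto}). Your two facts say that every root of $\mathfrak{d}_s^c$ lies in one of two hyperplanes: roots from $\Delta_m^0\cup\Phi$ lie in $C_{iX}=\{\gamma\mid\gamma(iX)=0\}$, and roots from $\Delta_m^{\neq}$ lie in $C_{\alpha_1}=\{\gamma\mid(\gamma,\alpha_1)=0\}$. Neither hyperplane contains all roots: $\Delta_m^{\neq}\neq\emptyset$ by assumption, and $\alpha_1\notin C_{\alpha_1}$. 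If $\mathfrak{d}_s^c$ were simple, its root system would be indecomposable, and Lemma \ref{lemma:root-sum} states precisely that an indecomposable root system cannot be the union $C_X\cup C_H$ for two non-zero vectors. This contradiction yields non-simplicity directly, with no statement about how $\Delta_m^{\neq}$ sits relative to $\Delta_m^0$ ever being needed. In short: you assembled the correct ingredients but lacked the covering-by-two-hyperplanes lemma that converts them into decomposability, and tried instead to produce the decomposition by hand, which cannot work.
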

\begin{proof}
 If $iX$ is in the center of $\mathfrak{d}^{c}$ then the proof is complete. So assume that  $\alpha (iX) \neq 0$ for some $\alpha\in\Delta_{m} .$ By Lemma \ref{lemaorto}, $\Phi\not=\emptyset$.  If $\beta\in \Phi$ then $\alpha \pm \beta$ is not a root since otherwise $\alpha \pm\beta \in\Delta_{p}^{\pm}\cup \Delta_{n}^{\pm}$ and $(\alpha \pm\beta)|_{\mathfrak{a}_{h}} =0,$ which is impossible (by Theorem \ref{thm:root-decomp}). Take any $\alpha_{1}\in \Phi .$ It follows that for any $\gamma\in\Delta_{m},$ $\gamma (iX)\neq 0$ we have $\gamma\perp\alpha_{1} .$ But now every root of $(\mathfrak{d}_{s}^{c}, (\mathfrak{t}^{c}\oplus\mathfrak{a}^{c})\cap \mathfrak{d}_{s}^{c})$ is orthogonal to $iX$ or to $\alpha_{1} $ and so all roots are contained in two hyperplanes (but not all roots are in the first one, nor in the second one). Lemma \ref{lemma:root-sum} implies that $\mathfrak{d}_{s}^{c}$ is not simple.
\end{proof}

\begin{corollary}\label{cor:decomp}
Lie algebra $\mathfrak{d}^{c}$ decomposes into a direct sum of Lie algebras
 $$\mathfrak{d}^{c}=\mathfrak{d}_{1}\oplus \mathfrak{d}_{2}\oplus \mathfrak{a}_{h}^{c}$$
 such that 
\begin{enumerate}
\item $iX\in \mathfrak{d}_{1}$ 
\item  $\mathfrak{a}_{h}^{\perp}\oplus \Sigma_{\alpha\in\Delta_{0}, \ \alpha |_{\mathfrak{a}_{h}}=0} \mathfrak{g}_{\alpha} \subset \mathfrak{d}_{2} .$
\item $\sigma(\mathfrak{d}_2)\subset\mathfrak{d}_2, \theta(\mathfrak{d}_2)\subset\mathfrak{d}_2$.
\end{enumerate}
\end{corollary}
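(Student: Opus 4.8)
The plan is to read the decomposition off from the reductive structure of $\mathfrak{d}^c=\mathfrak{z}_{\mathfrak{g}^c}(\mathfrak{a}_h)$. First I would record that $\mathfrak{d}^c$ is reductive and that $\mathfrak{a}_h^c$ lies in its center $Z(\mathfrak{d}^c)$: indeed $\mathfrak{j}^c\subset\mathfrak{d}^c$ is a Cartan subalgebra and $\mathfrak{a}_h^c\subset\mathfrak{j}^c$ commutes with everything centralizing $\mathfrak{a}_h$. Write $\mathfrak{d}^c=Z(\mathfrak{d}^c)\oplus\mathfrak{d}_s^c$ and $\mathfrak{d}_s^c=\bigoplus_j\mathfrak{s}_j$ into simple ideals, with root systems $R_j\subset R:=\Delta_m\cup\Phi$ relative to $\mathfrak{j}^c\cap\mathfrak{d}_s^c$. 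The guiding idea is to sort the simple factors and the central directions according to whether they ``see'' $\mathfrak{a}_h^\perp$ and $\Phi$ (these should go to $\mathfrak{d}_2$) or $iX$ (these should go to $\mathfrak{d}_1$); accordingly set $J_\Phi=\{j:R_j\cap\Phi\neq\emptyset\}$.

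The hard part, and the step where Lemma \ref{lemma:Phi} really enters, is to show that $iX$ centralizes every factor meeting $\Phi$, i.e. $R_j\perp iX$ for all $j\in J_\Phi$; this is exactly what forces the central element $iX$ into $\mathfrak{d}_1$. I would prove it factor by factor. Fix $j\in J_\Phi$ and choose $\beta\in\Phi\cap R_j$. As in the proof of Lemma \ref{lemma:Phi}, any $\gamma\in\Delta_m$ with $\gamma(iX)\neq0$ satisfies $\gamma\pm\beta\notin\Delta$ (otherwise $\gamma\pm\beta$ would be a root of $\Delta_p^{\pm}\cup\Delta_n^{\pm}$ vanishing on $\mathfrak{a}_h$, impossible by Theorem \ref{thm:root-decomp}), hence $\gamma\perp\beta$. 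Consequently every $\delta\in R_j$ either vanishes on $iX$, or has $\delta(iX)\neq0$, whence $\delta\in\Delta_m$ and $\delta\perp\beta$; thus $R_j=C_{iX}\cup C_{\beta}$ in the notation of Lemma \ref{lemma:root-sum}. Since $R_j$ is indecomposable and $t_\beta\neq0$, Lemma \ref{lemma:root-sum} forbids this covering unless $iX$ restricts to $0$ on the span of $R_j$, that is $R_j\perp iX$. Because $\sigma(iX)=-iX$ and $\theta(iX)=iX$, the property $R_j\perp iX$ is preserved by $\sigma$ and $\theta$, so it persists on the $\sigma,\theta$-saturation $S\supseteq J_\Phi$ of $J_\Phi$.

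With the claim in hand the assembly is bookkeeping. I would set $\mathfrak{d}_2:=\bigoplus_{j\in S}\mathfrak{s}_j\oplus U$, where $U$ is the $B$-orthogonal projection of $(\mathfrak{a}_h^\perp)^c$ onto $Z(\mathfrak{d}^c)$, and let $\mathfrak{d}_1$ consist of $\bigoplus_{j\notin S}\mathfrak{s}_j$ together with a central complement of $\mathfrak{a}_h^c\oplus U$ in $Z(\mathfrak{d}^c)$ chosen to contain the central component of $iX$. The inclusion $(\mathfrak{a}_h^\perp)^c\oplus\sum_{\alpha\in\Phi}\mathfrak{g}_\alpha\subset\mathfrak{d}_2$ holds because the projection of $(\mathfrak{a}_h^\perp)^c$ to $\mathfrak{s}_j$ is nonzero only for $j\in J_\Phi\subset S$ (only $\Phi$-roots are nonzero on $\mathfrak{a}_h^\perp$), while the $\Phi$-root spaces already sit in $\bigoplus_{j\in J_\Phi}\mathfrak{s}_j$. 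That $iX\in\mathfrak{d}_1$ follows from the claim (its semisimple part avoids $S$) together with the complexified Cartan grading $\mathfrak{g}^c=\mathfrak{k}^c\oplus\mathfrak{p}^c$: here $iX\in\mathfrak{k}^c$ whereas $\mathfrak{a}_h^c\oplus U\subset\mathfrak{p}^c$, so $B(iX,\mathfrak{a}_h^c\oplus U)=0$ and the central part of $iX$ indeed lies in the chosen complement. Finally $\mathfrak{d}_2$ is $\sigma$- and $\theta$-stable since $S$ is $\sigma,\theta$-saturated and $U$ is the projection of the $\sigma,\theta$-stable space $(\mathfrak{a}_h^\perp)^c$ onto the $\sigma,\theta$-stable space $Z(\mathfrak{d}^c)$, and the same $\theta$-grading together with positivity of $B$ on $\mathfrak{a}_h\subset\mathfrak{p}$ gives $\mathfrak{a}_h^c\cap\mathfrak{d}_2=0$, so the three summands are genuinely independent. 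The degenerate alternative in Lemma \ref{lemma:Phi}, that $iX$ is central in $\mathfrak{d}^c$, is subsumed by taking $S$ to be the full index set, the same formulas applying verbatim.
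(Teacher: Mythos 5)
Your proof is correct and follows essentially the same route as the paper's: decompose the reductive $\mathfrak{d}^c=Z(\mathfrak{d}^c)\oplus\mathfrak{d}_s^c$ into center and simple ideals, use the orthogonality observation from Lemma \ref{lemma:Phi} (any $\gamma\in\Delta_m$ with $\gamma(iX)\neq 0$ is orthogonal to every root of $\Phi$, via Theorem \ref{thm:root-decomp}) together with Lemma \ref{lemma:root-sum} to conclude that the relevant ideals are centralized by $iX$, and then sort the ideals and central directions into $\mathfrak{d}_1$ and $\mathfrak{d}_2$, with $\sigma,\theta$-stability coming from $\sigma(iX)=-iX$, $\theta(iX)=iX$. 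Your per-factor application of Lemma \ref{lemma:root-sum} (choosing $\beta\in\Phi\cap R_j$ inside each factor rather than the paper's single fixed $\alpha_1$) is a mild sharpening that makes explicit why every simple ideal meeting $\Phi$ lands in $\mathfrak{d}_2$ — a point the paper's construction leaves implicit.
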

\begin{proof}
Note that $\mathfrak{a}_{h}^{c}$ is in the center of $\mathfrak{d}^{c} .$ 
If $iX\in\mathfrak{z}(\mathfrak{d}^{c})$, then put $\mathfrak{d}_{1}:=\textrm{Span}_{\mathbb{C}}(iX)$ and take the ideal $\mathfrak{d}_{2}$  complementary to $\textrm{Span}_{\mathbb{C}}(iX)\oplus \mathfrak{a}_{h}^{c}.$ 
Otherwise,  assume that $\alpha_{1}\in\Phi \neq \emptyset $ and $\alpha (iX) \neq 0$ for some $\alpha\in\Delta_{m} .$ Define hyperplanes $s_{iX}$ and $s_{\alpha_1}$ in  $(\mathfrak{t}^{c}\oplus\mathfrak{a}^{c})\cap \mathfrak{d}_{s}^{c} ,$ orthogonal to $iX$ and $\alpha_{1},$   respectively. All roots of $(\mathfrak{d}_{s}^{c}, (\mathfrak{t}^{c}\oplus\mathfrak{a}^{c})\cap \mathfrak{d}_{s}^{c})$ lie in these two hyperplanes.   We see that if $\tilde{\mathfrak{d}}$ is a simple ideal of $\mathfrak{d}_{s}^{c}$, then (by Lemma \ref{lemma:root-sum}) all roots of $(\tilde{\mathfrak{d}},\tilde{\mathfrak{d}}\cap(\mathfrak{t}^{c}\oplus\mathfrak{a}^{c}))$ lie either in $s_{iX} ,$ or in $s_{\alpha_{1}} .$ Therefore we may take $\mathfrak{d}_{2}$ to be the sum of all simple ideals whose roots lie in $s_{iX}+(\mathfrak{a}_{h}^{\perp})^{c}.$ Then $\mathfrak{d}_{1}$ is the ideal of $\mathfrak{m}_{0}^{c}$ complementary to $\mathfrak{d}_{2}\cap \mathfrak{m}_{0}^{c} .$ 

Finally, the third part of the Corollary follows, since $\sigma(X)=X$ and $\theta(X)=X$.
\end{proof}

Let $\mathfrak{k}_d$  denote the subalgebra  of $\mathfrak{g}$ generated by vectors
$$ x_{\alpha} + \sigma (x_{\alpha}) +\theta (x_{\alpha} + \sigma (x_{\alpha})), \ i(x_{\alpha} - \sigma (x_{\alpha})) +\theta (i(x_{\alpha} - \sigma (x_{\alpha}))), \forall\alpha\in\Phi ,$$
and $\mathfrak{p}_d$ denote the subspace in $\mathfrak{g}$ generated by $\mathfrak{a}_h^{\perp}$ and all vectors of the form 
$$
 x_{\alpha} + \sigma (x_{\alpha}) -\theta (x_{\alpha} + \sigma (x_{\alpha})), \ i(x_{\alpha} - \sigma (x_{\alpha})) -\theta (i(x_{\alpha} - \sigma (x_{\alpha}))), \forall \alpha\in\Phi.
$$
From the very definition of $\mathfrak{k}_d$ and $\mathfrak{p}_d$, as well as the definition of $\mathfrak{d}^c$ and taking into consideration Corollary \ref{cor:decomp} one obtains the inclusions
\begin{equation}
\mathfrak{k}_d\oplus\mathfrak{p}_d\subset\mathfrak{d}_2^{\sigma}\subset\mathfrak{d}^{\sigma}\subset\mathfrak{k}_d\oplus\mathfrak{p}_d\oplus\mathfrak{m}_0\oplus\mathfrak{a}_h.
\end{equation} 

\begin{corollary}\label{cor:ah}
The following  is valid
$$\mathfrak{c}^{\sigma} \cap (\mathfrak{p}_{d}\oplus \mathfrak{a}_{h}) = \mathfrak{a}_{h } .$$
\end{corollary}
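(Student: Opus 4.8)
The plan is to prove both inclusions, the easy one directly and the hard one by reducing to a vanishing statement that I then attack with the method of Lemma~\ref{lemaorto}. The inclusion $\mathfrak{a}_h \subseteq \mathfrak{c}^\sigma \cap (\mathfrak{p}_d \oplus \mathfrak{a}_h)$ is immediate, since $\mathfrak{a}_h^c \subset \mathfrak{c}$ and $\mathfrak{a}_h$ is $\sigma$-fixed give $\mathfrak{a}_h \subset \mathfrak{c}^\sigma$, while trivially $\mathfrak{a}_h \subset \mathfrak{p}_d \oplus \mathfrak{a}_h$. For the reverse inclusion I would take $Y \in \mathfrak{c}^\sigma \cap (\mathfrak{p}_d \oplus \mathfrak{a}_h)$ and write $Y = P + A$ with $P \in \mathfrak{p}_d$ and $A \in \mathfrak{a}_h$; this sum is genuine because $\mathfrak{p}_d \subset \mathfrak{d}_2$ and $\mathfrak{a}_h \subset \mathfrak{a}_h^c$ sit in different ideal-summands of $\mathfrak{d}^c$ (Corollary~\ref{cor:decomp}). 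Since $A \in \mathfrak{a}_h \subset \mathfrak{c}^\sigma$, we get $P = Y - A \in \mathfrak{c}^\sigma \cap \mathfrak{p}_d$, so the whole statement reduces to showing $\mathfrak{c}^\sigma \cap \mathfrak{p}_d = \{0\}$, i.e.\ that $P$ vanishes.

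To prove $P = 0$ I would first show that $P$ centralizes $\mathfrak{h}$. On one side, $P \in \mathfrak{c}^\sigma$ normalizes $\mathfrak{h}$ (as $\mathfrak{c}$ normalizes $\mathfrak{h}^c$), so $[P,\mathfrak{h}] \subseteq \mathfrak{h}$. On the other side, if $P \perp \mathfrak{h}$ with respect to $B$, then for $h,h' \in \mathfrak{h}$ one has $B([P,h],h') = B(P,[h,h']) \in B(P,\mathfrak{h}) = 0$, so $[P,\mathfrak{h}]$ is $B$-orthogonal to $\mathfrak{h}$; combining the two gives $[P,\mathfrak{h}] \subseteq \mathfrak{h} \cap \mathfrak{h}^\perp = 0$. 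The one genuinely new ingredient is thus the orthogonality $\mathfrak{p}_d \perp \mathfrak{h}$. As $\mathfrak{p}_d \subset \mathfrak{p} \perp \mathfrak{k} \supseteq \mathfrak{k}_h$, it is enough to check $\mathfrak{p}_d \perp \mathfrak{p}_h$, and I would do this by comparing the restricted root gradings attached to $\mathfrak{a}$ and to $\mathfrak{a}_h$. The $\mathfrak{a}_h^\perp$-generators of $\mathfrak{p}_d$ are handled by observing that the $\mathfrak{a}$-component of any element of $\mathfrak{p}_h$ already lies in $\mathfrak{a}_h$ (a vector of nonzero $\mathfrak{a}_h$-weight has no component in $\mathfrak{g}_0 = \mathfrak{m}_0 + \mathfrak{a}$, and the $\mathfrak{a}_h$-weight-zero part of $\mathfrak{p}_h$ is $\mathfrak{a}_h$); the generators built from $x_\alpha$, $\alpha \in \Phi$, lie in restricted root spaces $\mathfrak{g}_{\pm\beta}$ with $\beta = \alpha|_{\mathfrak{a}} \neq 0$ but $\beta|_{\mathfrak{a}_h} = 0$, and $\mathfrak{p}_h$ has no component in such $\mathfrak{g}_{\pm\beta}$, so the Killing pairing vanishes.

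Once $P$ centralizes $\mathfrak{h}^c$, I would pass to individual root vectors. Since $P \in \mathfrak{p}_d \subset \mathfrak{d}_2$ and $iX \in \mathfrak{d}_1$ lie in complementary ideals, $[P,iX] = 0$, so $\operatorname{ad}(P)$ preserves the $\operatorname{ad}(iX)$-eigenspaces. Feeding the basis vectors $S_h^i + Q_i$ of $\mathfrak{h}_\gamma^c$ from Theorem~\ref{thm:root-decomp} into $[P, S_h^i + Q_i] = 0$ and separating the $iX$-positive, $iX$-negative and $iX$-zero components forces $[P, x_{\alpha_i}] = 0$ for every $\alpha_i \in \Delta_p^+$; running over all $\gamma$ and using $\sigma(P) = P$ and $\theta(P) = -P$ yields $[P, x_\alpha] = 0$ for all $\alpha \in \Delta_p^\pm \cup \Delta_n^\pm$. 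Then, exactly as in Lemma~\ref{lemaorto}, Lemma~\ref{lemz} expresses each $\beta \in \Delta_0^+$ as $\alpha_1 + \alpha_2$ with $\alpha_1 \in \Delta_p^\pm \cup \Delta_n^\pm$ and $\alpha_2 \in \Delta_p^\pm \cup \Delta_n^\pm \cup \Delta_m$, and a Jacobi-identity computation gives $[P, x_\beta] = 0$. The only delicate case, $\alpha_2 \in \Delta_m$, is settled by noting that $\beta(iX) = 0$ forces $\alpha_2(iX) = -\alpha_1(iX) \neq 0$; since the roots of $\mathfrak{d}_2$ vanish on $iX$, the vector $x_{\alpha_2}$ must lie in $\mathfrak{d}_1$, where $[P, x_{\alpha_2}] = 0$ because $[\mathfrak{d}_1,\mathfrak{d}_2] = 0$. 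Hence $P$ centralizes $\mathfrak{n}$, therefore also $\mathfrak{n}^- = \theta(\mathfrak{n})$, therefore $\mathfrak{a} \subseteq [\mathfrak{n},\mathfrak{n}^-]$; being in $\mathfrak{p}$ this puts $P \in \mathfrak{a}$, and no nonzero element of $\mathfrak{a}$ centralizes $\mathfrak{n}$, so $P = 0$, completing the reduction and hence the proof.

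I expect the orthogonality $\mathfrak{p}_d \perp \mathfrak{h}$ to be the main obstacle: it is the single new computation, and it hinges on tracking carefully how the two nested restricted root systems (with respect to $\mathfrak{a}$ and to $\mathfrak{a}_h$) interact precisely for the root vectors indexed by $\Phi$, where $\alpha|_{\mathfrak{a}_h} = 0$ yet $\alpha|_{\mathfrak{a}} \neq 0$.
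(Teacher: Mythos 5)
Your proof is correct, but it reaches the key vanishing $\mathfrak{p}_d\cap\mathfrak{c}^{\sigma}=\{0\}$ by a genuinely different mechanism than the paper. The paper never analyzes the element of $\mathfrak{p}_d\cap\mathfrak{c}^{\sigma}$ in place: it uses that $\mathfrak{a}_h^{\perp}$ is maximal abelian in $\mathfrak{p}_d$ to find $k\in K_d$ with $\operatorname{Ad}_k(Y)\in\mathfrak{a}_h^{\perp}$, checks that $\operatorname{Ad}_k(\mathfrak{h})$ is still well embedded and orthogonal to $\mathfrak{a}_h^{\perp}$ (conjugation by $K_d$ fixes $iX$ and $\mathfrak{a}_h$, so Theorem~\ref{thm:root-decomp} still applies), and then quotes Lemma~\ref{lemaorto} verbatim for the conjugated pair; the whole proof is a conjugation trick plus a reusability check. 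You instead work directly with $P\in\mathfrak{p}_d\cap\mathfrak{c}^{\sigma}$: your new ingredient is the orthogonality $\mathfrak{p}_d\perp\mathfrak{h}$, proved by splitting $\mathfrak{h}=\mathfrak{k}_h\oplus\mathfrak{p}_h$ and comparing the $\mathfrak{a}$- and $\mathfrak{a}_h$-gradings (correctly using that the $\mathfrak{a}_h$-weight-zero component of an element of $\mathfrak{p}_h$ lies in $\mathfrak{z}_{\mathfrak{p}_h}(\mathfrak{a}_h)=\mathfrak{a}_h$, and that $\mathfrak{p}_h$ has no component in $\mathfrak{g}_{\pm\beta}$ when $\beta\neq 0$ but $\beta|_{\mathfrak{a}_h}=0$); normalization plus orthogonality then gives $[P,\mathfrak{h}]=0$, and you rerun the Lemma~\ref{lemaorto} machinery on $P$ itself. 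Because $P$ is not in $\mathfrak{a}$, two steps that are automatic in Lemma~\ref{lemaorto} become genuine obstacles, and you identify and settle both correctly: the case $\alpha_2\in\Delta_m$ in the Lemma~\ref{lemz} decomposition (handled via $x_{\alpha_2}\in\mathfrak{d}_1$ and $[\mathfrak{d}_1,\mathfrak{d}_2]=0$, legitimate since $\alpha_2(iX)\neq 0$ while all roots of $\mathfrak{d}_2$ vanish on $iX$), and the endgame, where centralizing $\mathfrak{n}$ and $\mathfrak{n}^{-}$ forces $P\in\mathfrak{z}_{\mathfrak{p}}(\mathfrak{a})=\mathfrak{a}$ and hence $P=0$. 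The trade-off: the paper's route is shorter and reuses Lemma~\ref{lemaorto} as a black box, but its correctness hinges on well-embeddedness surviving $\operatorname{Ad}_k$; your route is longer but requires no conjugation and no re-verification that Theorem~\ref{thm:root-decomp} applies, and it isolates a statement of independent use ($\mathfrak{p}_d\perp\mathfrak{h}$) that the paper only obtains implicitly after conjugation.
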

\begin{proof}
It is straightforward to see that $\mathfrak{a}_{h}\subset \mathfrak{c}^{\sigma} .$ Take any $Y\in \mathfrak{p}_{d}\cap \mathfrak{c}^{\sigma} .$  Let $K_d$ be a connected closed (reductive) subgroup of $K$ corresponding to $\mathfrak{k}_d$. Since $\mathfrak{a}_h^{\perp}$ is maximal abelian in $\mathfrak{p}_d$, there is an element $k\in K_d$ such that $\operatorname{Ad}_k(Y)\in\mathfrak{a}^{\perp}_h$.

 Since $\mathfrak{k}_{d}\subset \mathfrak{d}_{2}^{\sigma}$,  it centralizes $iX$ and $\mathfrak{a}_{h}$ and so for any $g\in K_{d}$  the subalgebra $\operatorname{Ad}_{g}\mathfrak{h}$ is well embedded (since $\operatorname{Ad}_{g}\mathfrak{n}_{h}^{\pm}\subset \mathfrak{n}^{\pm} ,$ $\operatorname{Ad}_{g}\mathfrak{a}_{h}=\mathfrak{a}_{h}$ and $\theta (\operatorname{Ad}_{g}\mathfrak{h}) = \operatorname{Ad}_{g}\mathfrak{h}$). Also $\mathfrak{a}_{h}^{\perp}$ is orthogonal to $\operatorname{Ad}_{g}\mathfrak{h} .$  Now, one can repeat the argument of the proof of Lemma \ref{lemaorto} (taking the appropriate Lie brackets) applying it to $\operatorname{Ad}_{k}Y$ and $\operatorname{Ad}_{k}\mathfrak{h} .$
\end{proof}
\subsubsection{Construction of $Y$}
Let $D_2^{\sigma}$ denote the connected closed subgroup of $G$ corresponding to $\mathfrak{d}_2^{\sigma}$. 
\begin{lemma}\label{lemma:d2-good} For any $g\in D_2^{\sigma}$ the embedding $\operatorname{Ad}_g(\mathfrak{h})\hookrightarrow\mathfrak{g}$ is good. Thus, Theorem \ref{thm:root-decomp} can be applied to this embedding.
\end{lemma}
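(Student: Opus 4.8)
Recall that the conditions of Lemma \ref{dobro} defining a good embedding require, for $\mathfrak{h}':=\operatorname{Ad}_g(\mathfrak{h})$, that $\operatorname{Ad}_g(\mathfrak{a}_h)\subset\mathfrak{a}$, that $\operatorname{Ad}_g(\mathfrak{n}_h^{\pm})\subset\mathfrak{n}^{\pm}$, and that $\mathfrak{h}'$ be stable under a Cartan involution compatible with $\mathfrak{a},\mathfrak{n}$; the operative clause is really that Theorem \ref{thm:root-decomp} can be applied to $\mathfrak{h}'$. The two structural facts I would lean on are both contained in Corollary \ref{cor:decomp}: because $\mathfrak{d}^c=\mathfrak{d}_1\oplus\mathfrak{d}_2\oplus\mathfrak{a}_h^c$ is a decomposition into ideals with $iX\in\mathfrak{d}_1$, the real form $\mathfrak{d}_2^{\sigma}$ centralizes both $iX$ and $\mathfrak{a}_h$; and $\mathfrak{d}_2^{\sigma}$ is $\theta$-stable, so $D_2^{\sigma}$ is a $\theta$-stable reductive subgroup of $G$.

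The cleanest route is to conjugate the entire triple. For $g\in D_2^{\sigma}$ the triple $(\mathfrak{g},\operatorname{Ad}_g\mathfrak{h},\operatorname{Ad}_g\mathfrak{l})$ is isomorphic to $(\mathfrak{g},\mathfrak{h},\mathfrak{l})$, hence again standard, and it is well embedded with respect to the conjugated data $\theta_g=\operatorname{Ad}_g\theta\operatorname{Ad}_{g^{-1}}$, $\mathfrak{a}'=\operatorname{Ad}_g\mathfrak{a}$, $\mathfrak{n}'=\operatorname{Ad}_g\mathfrak{n}$. The decisive point, and the reason the full group $D_2^{\sigma}$ (and not merely its maximal compact) is admissible, is that $\operatorname{Ad}_g$ fixes $\mathfrak{a}_h$ pointwise and fixes $iX$: therefore $\mathfrak{a}_h\subset\mathfrak{a}'$, one has $X\in\operatorname{Ad}_g\mathfrak{t}$, and since $B$ is $\operatorname{Ad}$-invariant with $\operatorname{Ad}_{g^{-1}}X=X$, the element $X$ stays $B$-orthogonal to $\operatorname{Ad}_g\mathfrak{h}+\operatorname{Ad}_g\mathfrak{l}$. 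Thus every hypothesis of Theorem \ref{thm:root-decomp} holds for the conjugated triple with the \emph{same} $X$, and since $\operatorname{Ad}_g$ leaves $\mathfrak{a}_h$ and the sign of $\alpha(iX)$ untouched, conclusions (1)--(3) read through $\mathfrak{a}_h$ and $\Sigma_h$ are precisely the asserted statements about $\operatorname{Ad}_g(\mathfrak{h})$.

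If instead one insists on the fixed $\theta,\mathfrak{a},\mathfrak{n}$, then $\operatorname{Ad}_g(\mathfrak{a}_h)=\mathfrak{a}_h\subset\mathfrak{a}$ and $\operatorname{Ad}_g(\mathfrak{n}_h^{\pm})\subset\mathfrak{n}^{\pm}$ follow routinely, since $\operatorname{Ad}_g$ centralizes $\mathfrak{a}_h$ and hence preserves the $\mathfrak{a}_h$-weight grading, and with $\Sigma^{+}$ ordered by $\mathfrak{a}_h$ first every strictly dominant $\mathfrak{a}_h$-weight space lies in $\mathfrak{n}$, so the positive root spaces of $\operatorname{Ad}_g\mathfrak{h}$ remain in $\mathfrak{n}$. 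The genuine obstacle is the $\theta$-stability of $\operatorname{Ad}_g(\mathfrak{h})$: writing $g=k\exp Y_p$ with $k\in K\cap D_2^{\sigma}$ and $Y_p\in\mathfrak{p}\cap\mathfrak{d}_2^{\sigma}$ via the Cartan decomposition of the $\theta$-stable group $D_2^{\sigma}$, the compact factor is harmless because $\theta(k)=k$ (exactly the computation used for $K_d$ in the proof of Corollary \ref{cor:ah}), but along $\exp(\mathfrak{p}_d)$ one has $\theta(g)\neq g$ and $\operatorname{Ad}_g(\mathfrak{h})$ need not be $\theta$-invariant. This is precisely why I would pass to the conjugated involution $\theta_g$ as above, the whole point being that fixing $\mathfrak{a}_h$ and $iX$ allows one to replace $\theta$ by $\theta_g$ without disturbing the restricted-root data $(\mathfrak{a}_h,\Sigma_h,\Delta_0,\Delta_p^{\pm},\Delta_n^{\pm})$ on which the argument of Section \ref{sec:regular}, through Lemma \ref{lemz}, rests; I expect this $\theta$-step to be the delicate part of the proof.
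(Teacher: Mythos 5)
Your proof is correct, but it follows a genuinely different route from the paper's, and the difference is worth spelling out. The paper's own proof is a single sentence that only treats $g$ in the \emph{maximal compact subgroup} of $D_2^{\sigma}$: for such $g$ one has $\theta(g)=g$, so $\operatorname{Ad}_g$ commutes with $\theta$ and $\theta(\operatorname{Ad}_g\mathfrak{h})=\operatorname{Ad}_g\mathfrak{h}$ is automatic, while $\operatorname{Ad}_g\mathfrak{a}_h=\mathfrak{a}_h$ and $\operatorname{Ad}_g\mathfrak{n}_h^{\pm}\subset\mathfrak{n}^{\pm}$ follow, exactly as in your third paragraph, from the fact that $\mathfrak{d}_2^{\sigma}$ centralizes $iX$ and $\mathfrak{a}_h$. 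In other words, the paper keeps the reference data $(\theta,\mathfrak{a},\mathfrak{n})$ fixed and pays for it by silently shrinking the set of admissible $g$; as you correctly diagnose, for $g=\exp Y_p$ with $Y_p\in\mathfrak{p}\cap\mathfrak{d}_2^{\sigma}$ one gets $\theta(\operatorname{Ad}_g\mathfrak{h})=\operatorname{Ad}_{g^{-1}}\mathfrak{h}$, which need not equal $\operatorname{Ad}_g\mathfrak{h}$, so the fixed-$\theta$ argument cannot cover all of $D_2^{\sigma}$ even though the lemma is stated for every $g\in D_2^{\sigma}$. Your route --- conjugating the whole structure to $(\theta_g,\operatorname{Ad}_g\mathfrak{a},\operatorname{Ad}_g\mathfrak{n})$ and observing that $\mathfrak{a}_h$, $iX$, the restrictions $\alpha|_{\mathfrak{a}_h}$, the signs $\alpha(iX)$, and the $B$-orthogonality of $X$ to $\operatorname{Ad}_g\mathfrak{h}+\operatorname{Ad}_g\mathfrak{l}$ are all untouched --- proves the statement as written, for arbitrary $g$, which is what Proposition \ref{prop:centralizer} nominally invokes (the $g$ produced there conjugates Cartan subalgebras of $\mathfrak{d}_2^{\sigma}$ and is not given as compact). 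What the paper's version buys is that the downstream contradiction in Section \ref{sec:regular} can be run against the original root decomposition with nothing transported; it suffices for the application only modulo the additional (unstated) fact that the conjugating element in Proposition \ref{prop:centralizer} can be chosen in the maximal compact subgroup, since compact parts of $\theta$-stable Cartan subalgebras are conjugate under it. Your version removes that caveat at the mild price of re-reading the later formulas through the transported data, which, as you note, is harmless because the entire argument of Section \ref{sec:regular}, including Lemma \ref{lemz}, is phrased in terms of $\mathfrak{a}_h$, $iX$ and $\Sigma_h$.
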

\begin{proof}
 Since  $\mathfrak{d}_{2}^{\sigma}$ centralizes $iX$ and $\mathfrak{a}_{h}$ one derives that  for any $g$ in the maximal compact subgroup of $D_{2}^{\sigma}$ the subalgebra $Ad_{g}\mathfrak{h}$ is well embedded (because $\textrm{Ad}\mathfrak{n}_{h}^{\pm}\subset \mathfrak{n}^{\pm} ,$ $\textrm{Ad}\mathfrak{a}_{h}=\mathfrak{a}_{h}$ and $\theta (\textrm{Ad}_{g}\mathfrak{h}) = \textrm{Ad}_{g}\mathfrak{h}$).
\end{proof}
\begin{lemma}\label{lemma:y} For any $\alpha\in\Phi$ vector
 $$0\neq \tilde{Y}:= x_{\alpha}+\sigma (x_{\alpha})+\theta (x_{\alpha}+\sigma (x_{\alpha}))$$ 
does not belong to $\mathfrak{h}$, and for any $g\in D_2^{\sigma}$ vector $Y=\operatorname{Ad}_g(\tilde Y)$ does not belong to $\operatorname{Ad}_g(\mathfrak{h})$.
\end{lemma}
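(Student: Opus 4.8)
The plan is to observe first that the second assertion is a formal consequence of the first: for $g\in D_2^\sigma$ the map $\operatorname{Ad}_g$ is an automorphism of $\mathfrak{g}$, so $\operatorname{Ad}_g(\tilde Y)\in\operatorname{Ad}_g(\mathfrak{h})$ if and only if $\tilde Y\in\mathfrak{h}$. The conjugated version is worth recording only because, by Lemma \ref{lemma:d2-good}, $\operatorname{Ad}_g(\mathfrak{h})$ is again well embedded, so that Theorem \ref{thm:root-decomp} will remain applicable to it when $Y$ is constructed. Hence I would reduce everything to proving that $\tilde Y\neq 0$ and $\tilde Y\notin\mathfrak{h}$.

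For the elementary facts I would set $V:=x_\alpha+\sigma(x_\alpha)\in\mathfrak{g}$, so that $\tilde Y=V+\theta(V)\in\mathfrak{k}$ and, by definition, $\tilde Y$ is one of the generators of $\mathfrak{k}_d$; in particular $\tilde Y\in\mathfrak{k}_d\subset\mathfrak{d}_2^\sigma$ by Corollary \ref{cor:decomp}. Since $\theta$ acts by $-1$ on $\mathfrak{a}$ while $\sigma$ fixes $\mathfrak{a}$ pointwise, the four root vectors composing $\tilde Y$ lie in $\mathfrak{g}_\alpha,\mathfrak{g}_{\sigma\alpha}$ (of $\mathfrak{a}$-weight $\alpha|_{\mathfrak{a}}$) and in $\mathfrak{g}_{\theta\alpha},\mathfrak{g}_{\theta\sigma\alpha}$ (of $\mathfrak{a}$-weight $-\alpha|_{\mathfrak{a}}$); as $\alpha\in\Phi$ forces $\alpha|_{\mathfrak{a}}\neq 0$, these two weight spaces are independent, so $\tilde Y=0$ would give $V=0$, which is excluded by the normalization of the root vectors in Lemma \ref{lemtau}. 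Using $\theta(iX)=iX$, $\sigma(iX)=-iX$ and $\alpha|_{\mathfrak{a}_h}=0$ one then checks that all four roots again lie in $\Phi\subset\Delta_0$; thus $[\tilde Y,iX]=0$, $[\tilde Y,\mathfrak{a}_h]=0$, and $\tilde Y\in\sum_{\beta\in\Phi}\mathfrak{g}_\beta$.

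Assume now, for contradiction, that $\tilde Y\in\mathfrak{h}$. Being in $\mathfrak{k}$ and centralizing $\mathfrak{a}_h$, it lies in $\mathfrak{z}_{\mathfrak{k}_h}(\mathfrak{a}_h)$, and by the previous paragraph $\tilde Y\in\mathfrak{h}^c\cap\sum_{\beta\in\Phi}\mathfrak{g}_\beta\subset\mathfrak{z}_{\mathfrak{h}^c}(\mathfrak{a}_h^c)$, the zero restricted–root block of $\mathfrak{h}^c$. This block is precisely the part of $\mathfrak{h}^c$ not described by statement 3 of Theorem \ref{thm:root-decomp}, whose vectors $S_h^i$ all satisfy $\alpha_i|_{\mathfrak{a}_h}\in\Sigma_h$ (hence nonzero, by statement 2), so to control it I would turn to regularity.

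Here lies the crux, and the step I expect to be the main obstacle. The $\sigma$- and $\theta$-invariant Cartan subalgebra $\mathfrak{c}$ of Lemma \ref{lemma:invar} normalizes $\mathfrak{h}^c$ and satisfies $\mathfrak{a}_h^c\subset\mathfrak{c}\subset\mathfrak{d}^c$, so $\mathfrak{z}_{\mathfrak{h}^c}(\mathfrak{a}_h^c)$ is a regular subalgebra of the reductive algebra $\mathfrak{d}^c$ with respect to $\mathfrak{c}$, and each $\operatorname{ad}(\mathfrak{c})$-weight component of $\tilde Y$ again lies in $\mathfrak{h}^c$. I would extract from $\tilde Y$ a nonzero $\mathfrak{c}$-root vector of $\mathfrak{h}^c$ whose $\mathfrak{j}^c$-support still meets $\mathfrak{g}_\alpha$ with $\alpha\in\Phi$, and then, bracketing it with suitable elements of $\mathfrak{c}^\sigma$ and of $\mathfrak{d}_2^\sigma$ and invoking Lemma \ref{lemz} (every root of $\Delta_0\setminus\Delta_m$ is a sum of a root of $\Delta_p^\pm\cup\Delta_n^\pm$ with a root of $\Delta_p^\pm\cup\Delta_n^\pm\cup\Delta_m$), force a nonzero vector either into $\mathfrak{a}_h^\perp\cap\mathfrak{c}^\sigma$, contradicting Lemma \ref{lemaorto}, or into $\mathfrak{c}^\sigma\cap(\mathfrak{p}_d\oplus\mathfrak{a}_h)$ but outside $\mathfrak{a}_h$, contradicting Corollary \ref{cor:ah}. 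The delicate point—and exactly where the hypothesis that $\mathfrak{h}^c$ is regular is indispensable—is to reconcile the $\mathfrak{j}^c$-root description of $\tilde Y$, in which Theorem \ref{thm:root-decomp} and well-embeddedness are phrased, with the $\mathfrak{c}$-root description forced by regularity; I expect most of the work to go into this translation.
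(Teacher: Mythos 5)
Your opening reduction (the second claim follows since $\operatorname{Ad}_g$ is an automorphism, with Lemma \ref{lemma:d2-good} guaranteeing well-embeddedness is preserved) and your elementary observations ($\tilde Y\in\mathfrak{k}_d\subset\mathfrak{d}_2^{\sigma}$, $\tilde Y\neq 0$, $[\tilde Y,iX]=0$, $[\tilde Y,\mathfrak{a}_h]=0$) are fine and consistent with the paper. But the heart of the lemma --- the claim $\tilde Y\notin\mathfrak{h}$ --- is never actually proved. Your final paragraph is a plan, not an argument: you propose to ``extract a nonzero $\mathfrak{c}$-root vector,'' bracket it ``with suitable elements'' of $\mathfrak{c}^{\sigma}$ and $\mathfrak{d}_2^{\sigma}$, and thereby ``force a nonzero vector'' into a set forbidden by Lemma \ref{lemaorto} or Corollary \ref{cor:ah}; you then explicitly flag the translation between the $\mathfrak{j}^c$-root picture and the $\mathfrak{c}$-root picture as an unresolved obstacle. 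No concrete vector is produced and no contradiction is derived, so this is a genuine gap. Moreover the direction is misguided: regularity of $\mathfrak{h}^c$, the Cartan subalgebra $\mathfrak{c}$, Lemma \ref{lemz}, Lemma \ref{lemaorto} and Corollary \ref{cor:ah} play no role whatsoever in the paper's proof of this lemma (indeed they cannot be the right tools here, since Lemma \ref{lemma:y} feeds \emph{into} Proposition \ref{prop:centralizer}, whose job is precisely to convert regularity into the element $Y$; the present lemma is the regularity-free input).

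What your proposal is missing is the following short mechanism, which uses only well-embeddedness (Lemma \ref{lemma:nh+nl}). Since $\alpha\in\Phi\subset\Delta_0$, the real vector $x_{\alpha}+\sigma(x_{\alpha})$ lies in a restricted root space of $\mathfrak{g}$, hence (say for $\alpha$ positive) in $\mathfrak{n}=\mathfrak{n}_h\oplus\mathfrak{n}_l$; write $x_{\alpha}+\sigma(x_{\alpha})=H+L$ with $H\in\mathfrak{n}_h$, $L\in\mathfrak{n}_l$, so that $\tilde Y=(H+\theta(H))+(L+\theta(L))$. If $\tilde Y\in\mathfrak{h}$, then since $H+\theta(H)\in\mathfrak{h}$ (as $\theta(\mathfrak{h})=\mathfrak{h}$) we get $L+\theta(L)\in\mathfrak{h}$. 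Because $W\mapsto W+\theta(W)$ is injective on $\mathfrak{n}$ (as $\mathfrak{n}\cap\theta(\mathfrak{n})=\{0\}$) and carries $\mathfrak{n}=\mathfrak{n}_h\oplus\mathfrak{n}_l$ onto the direct sum
$$\{W+\theta(W)\mid W\in\mathfrak{n}_h\}\oplus\{W+\theta(W)\mid W\in\mathfrak{n}_l\},$$
an element of $\mathfrak{h}$ of this form must come from $\mathfrak{n}_h$, forcing $L\in\mathfrak{n}_h\cap\mathfrak{n}_l=\{0\}$. Hence $x_{\alpha}+\sigma(x_{\alpha})$ would be a nonzero element of $\mathfrak{n}_h$ that is centralized by $\mathfrak{a}_h$ (because $\alpha|_{\mathfrak{a}_h}=0$ for $\alpha\in\Phi$); this is impossible, since $\mathfrak{n}_h$ is a sum of root spaces for \emph{nonzero} restricted roots in $\Sigma_h^{+}$. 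This two-step argument --- split along $\mathfrak{n}=\mathfrak{n}_h\oplus\mathfrak{n}_l$, then use the zero $\mathfrak{a}_h$-weight of $\tilde Y$ --- is exactly the content of the paper's proof, and it is both more elementary and more robust than the regularity-based route you sketched.
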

\begin{proof}
Since $x_{\alpha}+\sigma (x_{\alpha}) \in \mathfrak{n}$, there exist $H\in \mathfrak{n}_{h}$ and $L\in \mathfrak{n}_{l}$ such that $x_{\alpha}+\sigma (x_{\alpha}) = H+L$ and so $\tilde{Y}= H+ \theta (H) +L+ \theta (L).$ If $\tilde{Y}\in \mathfrak{h}$ then $L+\theta (L)\in \mathfrak{h}.$ But
$$\{ W+\theta (W) \ | \ W\in\mathfrak{n} \} = \{  W+\theta (W) \ | \ W\in\mathfrak{n}_{h} \} \oplus \{ W+\theta (W) \ | \ W\in\mathfrak{n}_{l}  \} $$
and thus $L+\theta (L) = H_{1} +\theta (H_{1})$ for some $H_{1}\in \mathfrak{n}_{h} .$ As $\mathfrak{n}\cap \theta (\mathfrak{n}) =\{ 0 \} $ therefore $L\in \mathfrak{n}_{h} $ and so $L=0.$ Thus $x_{\alpha} + \sigma (x_{\alpha}) \in \mathfrak{n}_{h}$ and $\alpha |_{\mathfrak{a}_{h}}=0,$ a contradiction. The second claim is clear. 
\end{proof}

Now we prove the main proposition which yields $Y$ as in the title of Subsection \ref{subsect:y}.
\begin{proposition}\label{prop:centralizer}
There exists a non-zero $Y\in \mathfrak{c}^{\sigma}\cap \mathfrak{d}_{2}^{\sigma},$ $Y\notin \mathfrak{m}_{0}$ which centralizes $\mathfrak{h}.$
\end{proposition}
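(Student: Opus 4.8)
The plan is to locate $Y$ inside the toral centralizer of $\mathfrak{h}^{c}$. Set $\mathfrak{z}_{c}=\mathfrak{c}\cap\mathfrak{z}_{\mathfrak{g}^{c}}(\mathfrak{h}^{c})$. Since $\mathfrak{c}$ is abelian, normalizes $\mathfrak{h}^{c}$, and contains the Cartan subalgebra $\mathfrak{c}_{h}=\mathfrak{c}\cap\mathfrak{h}^{c}$ of $\mathfrak{h}^{c}$, each $Y\in\mathfrak{c}$ acts on $\mathfrak{h}^{c}$ preserving its $\mathfrak{c}_{h}$-root spaces, so $Y$ centralizes $\mathfrak{h}^{c}$ exactly when $\gamma(Y)=0$ for every root $\gamma$ of $(\mathfrak{h}^{c},\mathfrak{c}_{h})$. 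Hence the elements of $\mathfrak{c}^{\sigma}$ centralizing $\mathfrak{h}$ are precisely $\mathfrak{z}_{c}^{\sigma}$, a $\sigma$- and $\theta$-stable complement of $\mathfrak{c}_{h}$ in $\mathfrak{c}$, and the proposition reduces to producing a non-zero real element of $\mathfrak{z}_{c}\cap\mathfrak{d}_{2}$ lying outside $\mathfrak{m}_{0}$.

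Next I would record two rigorous facts about the relevant Cartan piece $\mathfrak{c}_{2}^{\sigma}:=\mathfrak{c}^{\sigma}\cap\mathfrak{d}_{2}^{\sigma}$. First, $\mathfrak{c}_{2}^{\sigma}$ is a Cartan subalgebra of the reductive algebra $\mathfrak{d}_{2}^{\sigma}$, and by Corollary \ref{cor:ah} (projecting an element of $\mathfrak{c}^{\sigma}$ onto the $\theta$-eigenspaces $\mathfrak{k}_{d},\mathfrak{p}_{d}$ and using $\mathfrak{a}_{h}\cap\mathfrak{d}_{2}=\{0\}$) one gets $\mathfrak{c}_{2}^{\sigma}\subset\mathfrak{k}_{d}$, i.e.\ it is a compact Cartan subalgebra. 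Since $\mathfrak{a}_{h}^{\perp}\supseteq\mathfrak{a}_{l}\neq\{0\}$, the algebra $\mathfrak{d}_{2}^{\sigma}$ is non-compact, so a rank count gives $\mathfrak{c}_{2}^{\sigma}\not\subset\mathfrak{m}_{0}$: a compact Cartan subalgebra cannot be contained in the centralizer $\mathfrak{z}_{\mathfrak{k}_{d}}(\mathfrak{a}_{h}^{\perp})$ of a non-trivial maximal split torus. Second, by regularity each root space of $\mathfrak{h}^{c}$ is a genuine root space $\mathfrak{g}_{\gamma}$ with $t_{\gamma}\in\mathfrak{c}_{h}$, and the ideal decomposition $\mathfrak{c}=\mathfrak{c}_{1}\oplus\mathfrak{c}_{2}\oplus\mathfrak{a}_{h}^{c}$ is $B$-orthogonal; hence an element of $\mathfrak{c}_{2}$ centralizes $\mathfrak{h}^{c}$ if and only if it is $B$-orthogonal to $\mathfrak{c}_{h}$. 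The task therefore becomes to show that $\mathfrak{c}_{2}^{\sigma}\cap\mathfrak{c}_{h}^{\perp}$ is not contained in $\mathfrak{m}_{0}$.

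To settle this I would exploit the conjugation freedom provided by Lemmas \ref{lemma:d2-good} and \ref{lemma:y}. Conjugating $\mathfrak{h}$ by $g\in D_{2}^{\sigma}$ keeps the embedding good and replaces the pair $(\mathfrak{c}_{h},\mathfrak{c}_{2})$ by its $\operatorname{Ad}_{g}$-image inside $\mathfrak{d}_{2}$, whereas the reference subalgebra $\mathfrak{m}_{0}$ stays fixed; thus such conjugations genuinely move the $\mathfrak{h}$-centralizing part of the compact Cartan relative to $\mathfrak{m}_{0}$. The witnesses are the vectors $\tilde{Y}=x_{\alpha}+\sigma(x_{\alpha})+\theta(x_{\alpha}+\sigma(x_{\alpha}))$ with $\alpha\in\Phi$ (and $\Phi\neq\emptyset$ by Lemma \ref{lemaorto}), which by Lemma \ref{lemma:y} never lie in $\mathfrak{h}$ and, since $\alpha|_{\mathfrak{a}}\neq0$, never lie in $\mathfrak{m}_{0}$. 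I would transport such a $\tilde{Y}$ into the compact Cartan $\mathfrak{c}_{2}^{\sigma}$ by an element of the maximal compact subgroup of $D_{2}^{\sigma}$ and read off the desired $Y$; the relation $[Y,iX]=0$ then comes for free, because $iX\in\mathfrak{d}_{1}$ commutes with $\mathfrak{d}_{2}\ni Y$.

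The main obstacle is precisely the simultaneous control in this last step: one must exhibit a single $Y$ in the compact Cartan that both centralizes $\mathfrak{h}$ (so lies in $\mathfrak{c}_{h}^{\perp}$) and escapes $\mathfrak{m}_{0}$. A priori the $\mathfrak{h}$-centralizing elements of $\mathfrak{c}_{2}^{\sigma}$ could all happen to centralize $\mathfrak{a}_{l}$ and hence sit in $\mathfrak{m}_{0}$; ruling this out is where the $\mathfrak{l}$-flavoured structure of $\mathfrak{d}_{2}$ (its root spaces come from $\Phi$, where $\alpha|_{\mathfrak{a}_{h}}=0\neq\alpha|_{\mathfrak{a}_{l}}$) together with the non-membership $\tilde{Y}\notin\mathfrak{h}$ must be combined, using the $D_{2}^{\sigma}$-conjugation to break the coincidence. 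Once $Y$ is obtained, its property $Y\notin\mathfrak{m}_{0}$ is exactly what will collide, in the following subsection, with the forced conclusion $[Y,\mathfrak{a}]=0$ extracted from Theorem \ref{thm:root-decomp} and Lemma \ref{lemz}.
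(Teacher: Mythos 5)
Your proposal assembles the same ingredients as the paper's proof --- the reduction of ``$Y$ centralizes $\mathfrak{h}$'' to $B$-orthogonality against $\mathfrak{c}_{h}$ via regularity, the identification of $\mathfrak{c}^{\sigma}\cap\mathfrak{d}_{2}^{\sigma}$ as a compact Cartan subalgebra of $\mathfrak{d}_{2}^{\sigma}$ through Corollary \ref{cor:ah}, the witnesses $\tilde{Y}=x_{\alpha}+\sigma(x_{\alpha})+\theta(x_{\alpha}+\sigma(x_{\alpha}))$ with $\alpha\in\Phi\neq\emptyset$, and the transport of $\tilde{Y}$ into that Cartan by the maximal compact subgroup of $D_{2}^{\sigma}$ --- and your supplementary observation that a compact Cartan subalgebra of $\mathfrak{d}_{2}^{\sigma}$ cannot lie in $\mathfrak{m}_{0}$ (else it would centralize $\mathfrak{a}_{h}^{\perp}\neq\{0\}$, contradicting self-centralization) is correct and even more explicit than anything stated in the paper. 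But the argument stops exactly where it has to be closed: your final paragraph concedes that you cannot exclude the possibility that every $\mathfrak{h}$-centralizing element of $\mathfrak{c}^{\sigma}\cap\mathfrak{d}_{2}^{\sigma}$ sits in $\mathfrak{m}_{0}$, and declaring this ``the main obstacle'' whose ingredients ``must be combined'' is not a proof. This is a genuine gap, and it is the whole content of the proposition: an element outside $\mathfrak{m}_{0}$ (which your rank count provides) is useless unless it simultaneously centralizes $\mathfrak{h}$.

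The idea you are missing is that one should not force the transported element $Y=\operatorname{Ad}_{g}(\tilde{Y})$ to interact with the \emph{original} $\mathfrak{h}$ at all. The paper replaces the pair $(\mathfrak{g},\mathfrak{h})$ by $(\mathfrak{g},\operatorname{Ad}_{g}(\mathfrak{h}))$, where $g$ is the \emph{same} element of the maximal compact subgroup of $D_{2}^{\sigma}$ that carries $\tilde{Y}$ into $\mathfrak{c}^{\sigma}\cap\mathfrak{d}_{2}^{\sigma}$. This replacement is harmless by Lemma \ref{lemma:d2-good}: since $D_{2}^{\sigma}$ centralizes $\mathfrak{a}_{h}$ and $iX$, conjugation preserves well-embeddedness, regularity, the standardness of the triple, and fixes all reference objects ($\mathfrak{a}_{h}$, $X$, $\mathfrak{m}_{0}$, $\mathfrak{d}_{2}$, $\Phi$), so it suffices to prove the proposition for the conjugated pair (the ``abuse of notation'' closing the paper's proof); the subsequent contradiction with Theorem \ref{thm:root-decomp} and Lemma \ref{lemz} then applies verbatim to that pair. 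After the replacement, the two conditions you could not couple decouple: $Y\notin\operatorname{Ad}_{g}(\mathfrak{h})$ is precisely the second assertion of Lemma \ref{lemma:y} (non-membership transports along $\operatorname{Ad}_{g}$), whence one chooses $Y\perp\operatorname{Ad}_{g}(\mathfrak{h})$, and regularity converts orthogonality plus normalization into $Y\in\mathfrak{z}_{\mathfrak{g}}(\operatorname{Ad}_{g}(\mathfrak{h}))$; meanwhile $Y\notin\mathfrak{m}_{0}$ is by construction of $\tilde{Y}$, whose root components satisfy $\alpha|_{\mathfrak{a}}\neq0$. In short, you tried to ``break the coincidence'' by moving $Y$ while keeping $\mathfrak{h}$ fixed; the proof moves $\mathfrak{h}$ together with $Y$, so that the problematic simultaneous condition never has to be verified for the original embedding.
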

\begin{proof} For the convenience of the reader we first summarize facts needed in the course of the proof.
\begin{enumerate}
\item For  any semisimple $\mathfrak{g}$ with a Cartan involution $\theta$ and any $\theta$-invariant Cartan subalgebra $\mathfrak{b}$ one has the decomposition 
$$\mathfrak{b}=\mathfrak{b}_{+}\oplus\mathfrak{b}_{-}.$$
By an abuse of language we will call $\mathfrak{b}_{+}$ the compact part of $\mathfrak{b}$ and $\mathfrak{b}_{-}$ non-compact part.
\item By Lemma \ref{lemma:invar} $\mathfrak{c}^{\sigma}$ is a $\theta$-invariant subalgebra of $\mathfrak{d}^{\sigma}$ and, hence,
$$\mathfrak{c}^{\sigma}=\mathfrak{c}^{\sigma}_{+}\oplus\mathfrak{c}^{\sigma}_{-}$$
and by Corollary \ref{cor:decomp}
$$\mathfrak{c}_{-}^{\sigma}=\mathfrak{a}_h,\mathfrak{c}^{\sigma}=\mathfrak{c}_{+}^{\sigma}\oplus\mathfrak{a}_h.$$
\item By Corollary \ref{cor:ah}
$$\mathfrak{c}^{\sigma}_{+}=\mathfrak{c}^{\sigma}\cap\mathfrak{d}_2^{\sigma}.$$
\item It is easy to see that any two compact parts of any two $\theta$-invariant Cartan subalgebras (with compact parts of the same dimension) in any semisimple real Lie algebra $\mathfrak{g}$ are conjugate, and, in particular, any two Cartan subalgebras in $\mathfrak{d}_2^{\sigma}$ are conjugate by an element $\operatorname{Ad}_g,g\in D_2^{\sigma}$. Therefore, any compact part of any Cartan subalgebra in $\mathfrak{d}_2^{\sigma}$ is conjugate to $\mathfrak{d}_2^{\sigma}\cap\mathfrak{c}^{\sigma}$.
\end{enumerate}
Now we complete the proof using 1)-4) above. Take
$$\tilde Y=x_{\alpha}+\sigma(x_{\alpha})+\theta(x_{\alpha}+\sigma(x_{\alpha}),\alpha\in\Phi.$$
By formula $(5)$, $\tilde Y\in\mathfrak{k}_d\subset\mathfrak{d}_2^{\sigma}$. Since $\mathfrak{k}_d$ is compact, $\tilde Y$ belongs to some compact part of some Cartan subalgebra in $\mathfrak{d}_2^{\sigma}$. It follows that $\tilde Y$ is conjugate to element $Y\in \mathfrak{c}^{\sigma}_{+}=\mathfrak{c}^{\sigma}\cap\mathfrak{d}_2^{\sigma}$. Thus, $Y=\operatorname{Ad}_g(\tilde Y)$ for some $g\in D_2^{\sigma}$. By Lemma \ref{lemma:d2-good} we can replace the pair $(\mathfrak{g},\mathfrak{h})$ by $(\mathfrak{g},\operatorname{Ad}_g(\mathfrak{h})$ and $\tilde Y$ by $Y\in\mathfrak{c}^{\sigma}\cap\mathfrak{d}_2^{\sigma}$. By construction, $Y\not\in\mathfrak{m}_0$ and by Lemma \ref{lemma:y} $Y\not\in\operatorname{Ad}_g(\mathfrak{h})$, hence one can also choose $Y\perp\operatorname{Ad}_g(\mathfrak{h})$. Since $\mathfrak{h}^c$ is regular, this is the same as $Y\in\mathfrak{z}_{\mathfrak{g}}(\operatorname{Ad}_g(\mathfrak{h}))$. Now, by some abuse of notation we write this as in the title.

\end{proof}

\section{Completion of proof of  Theorem \ref{thm:class}: non-regular case }\label{sect:ncomp-dim}

In view of Theorem \ref{thm:centr} we do not assume that the embeddings $\mathfrak{h}^c, \mathfrak{l}^c\hookrightarrow\mathfrak{g}^c$ are regular. Instead,  we assume that $\mathfrak{h}^c$ and $\mathfrak{l}^c$ are simple. 

The proof is based on a careful consideration of possibilities for the real forms of the embeddings $\mathfrak{h}^c\hookrightarrow\mathfrak{g}^c$, therefore, we postpone the calculations to the very end of the article. In this section we explain the main steps of the proof. 
\subsection{Terminology and notation} 
We begin by introducing some convenient (for us) terminology. Since $\mathfrak{g}^c$ is a classical Lie algebra, that is, $\mathfrak{g}^c$ can be $\mathfrak{sl}(V),\mathfrak{so}(V)$, or $\mathfrak{sp}(V)$,  we get linear representations $\mathfrak{h}^c\hookrightarrow\mathfrak{g}^c$ in the complex vector space $V$ and call them  {\it linear} if $\mathfrak{g}^c=\mathfrak{sl}(V)$, {\it orthogonal}, if $\mathfrak{g}^c=\mathfrak{so}(V)$ and {\it symplectic}, if $\mathfrak{g}^c=\mathfrak{sp}(V).$

We single out {\it tensor representations}, that is, the representations of the form
\begin{enumerate}
\item $\mathfrak{h}^c=\mathfrak{so}(V_1)\oplus\mathfrak{so}(V_2)\hookrightarrow\mathfrak{g}^c=\mathfrak{sl}(V_1\otimes V_2)$,
\item $\mathfrak{h}^c=\mathfrak{so}(V_1)\oplus\mathfrak{so}(V_2)\hookrightarrow\mathfrak{g^c}=\mathfrak{so}(V_1\otimes V_2$,
\item $\mathfrak{h}^c=\mathfrak{sp}(V_1)\oplus\mathfrak{sp}(V_2)\hookrightarrow\mathfrak{g}^c=\mathfrak{sp}(V_1\otimes V_2).$
\end{enumerate}
The third type of subalgebras we need has the form
\begin{enumerate}
\item $\mathfrak{h}^c=\mathfrak{so}(k,\mathbb{C})\oplus\mathfrak{so}(n-k,\mathbb{C})\hookrightarrow\mathfrak{g}^c=\mathfrak{so}(n,\mathbb{C})$,
\item $\mathfrak{h}^c=\mathfrak{sp}(k,\mathbb{C})\oplus\mathfrak{sp}(n-k,\mathbb{C})\hookrightarrow\mathfrak{g}^c=\mathfrak{sp}(n,\mathbb{C}).$
\end{enumerate}
We will call such subalgebras {\it symmetric} (they are fixed points of involutive automorphisms of $\mathfrak{g}^c$, because they are maximal in $\mathfrak{g}^c$ and  Dynkin's classification of such subalgebras is up to conjugation). Note that we do not consider all possible tensor representations or symmetric subalgebras, but only the listed ones. Alternatively we refer to Table \ref{tab-max}, where maximal subalgebras of maximal rank are presented. 

\begin{definition} {\rm We say that $\mathfrak{h}^c$ is an irreducible/reducible subalgebra in $\mathfrak{g}^c$ if the linear representation $\mathfrak{h}^c\hookrightarrow\mathfrak{g}^c$ is irreducible/reducible. }
\end{definition}
\subsection{Maximal subalgebras of classical complex simple Lie algebras}
In this subsection we summarize Dynkin's results on maximal subalgebras in classical Lie algebras in \cite{d} and \cite{ov} which we formulate as Theorem  \ref{thm:max-ov} and Theorem \ref{thm:d-irred}.
\begin{theorem}[\cite{ov}]\label{thm:max-ov} All maximal proper subalgebras in classical complex Lie algebras fall into the classes below, up to conjugation in $\mathfrak{g}^c$:
\vskip6pt
\centerline{\bf I: Reducible}
\vskip6pt
\begin{itemize}
\item maximal parabolic (I.1),
\item symmetric (I.2)
\end{itemize}
\centerline{\bf II: Irreducible}
\vskip6pt
\begin{itemize}
\item simple (II.1)
\item tensor representations (II.2)
\end{itemize}
\end{theorem}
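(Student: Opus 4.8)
The plan is to study a maximal proper subalgebra $\mathfrak{h}^c\subset\mathfrak{g}^c$ through its action on the defining module $V$, where $\mathfrak{g}^c$ is one of $\mathfrak{sl}(V),\mathfrak{so}(V),\mathfrak{sp}(V)$. The primary dichotomy --- whether the restriction to $\mathfrak{h}^c$ of the standard representation is reducible or irreducible --- is exactly the split into classes I and II, so I would organize the whole argument around it. First I would dispose of the reducible case. Suppose $\mathfrak{h}^c$ preserves a proper nonzero subspace $W\subset V$. For $\mathfrak{g}^c=\mathfrak{sl}(V)$ the stabilizer of $W$ is a maximal parabolic $\mathfrak{p}$, and from $\mathfrak{h}^c\subseteq\mathfrak{p}\subsetneq\mathfrak{g}^c$ maximality gives $\mathfrak{h}^c=\mathfrak{p}$ (class I.1). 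For $\mathfrak{g}^c=\mathfrak{so}(V)$ or $\mathfrak{sp}(V)$, equipped with the invariant bilinear form $\langle\cdot,\cdot\rangle$, I would split according to its restriction to $W$: if $\langle\cdot,\cdot\rangle|_W$ is non-degenerate then $V=W\oplus W^{\perp}$ is an invariant orthogonal splitting, so $\mathfrak{h}^c$ lies in the block subalgebra $\mathfrak{so}(W)\oplus\mathfrak{so}(W^{\perp})$ (resp. symplectic), which is maximal symmetric, and maximality forces equality (class I.2); if instead the form degenerates on some invariant $W$, then $W\cap W^{\perp}$ is a nonzero invariant isotropic subspace and $\mathfrak{h}^c$ sits inside the parabolic stabilizing a maximal invariant isotropic flag (class I.1).

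Next I would treat the irreducible case. Irreducibility forces $\mathfrak{h}^c$ to be reductive (the solvable radical acts by scalars, and by Schur's lemma over $\mathbb{C}$ the centre acts by scalars with commutant $\mathbb{C}$), so I may write $\mathfrak{h}^c=\mathfrak{z}\oplus\mathfrak{h}^c_{ss}$ with $\mathfrak{h}^c_{ss}$ semisimple. The decisive question is whether $\mathfrak{h}^c_{ss}$ is simple. If it is, we land in the simple irreducible class II.1. If $\mathfrak{h}^c_{ss}$ is a proper direct sum of ideals, then, since the whole acts irreducibly while each ideal acts through the irreducible modules built from the others, the representation factors as an outer tensor product $V\cong V_1\otimes V_2$. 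When $\mathfrak{g}^c$ is orthogonal or symplectic, compatibility with $\langle\cdot,\cdot\rangle$ is governed by the sign rule for induced forms (symmetric $\otimes$ symmetric and symplectic $\otimes$ symplectic give symmetric, symmetric $\otimes$ symplectic gives skew), which pins down the admissible pairings of $\mathfrak{so}$- and $\mathfrak{sp}$-factors and yields the tensor configurations of class II.2.

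The genuinely hard step is not exhibiting these candidate classes but establishing \emph{maximality} in the irreducible case: one must rule out the existence of any intermediate irreducible linear subalgebra, and conversely discard those irreducible simple or tensor subalgebras that fail to be maximal because they are swallowed by a larger proper classical subalgebra through an exceptional isomorphism or a further tensor or symmetric factorization. This is precisely the weight-theoretic case analysis carried out in \cite{d}, together with the self-duality bookkeeping that decides, for each highest weight, whether the representation lands in $\mathfrak{sl}$, $\mathfrak{so}$, or $\mathfrak{sp}$. Since reproducing that machinery is out of scope here, in the paper I would invoke the statement as recorded in \cite{ov} and use only the coarse fourfold partition I.1, I.2, II.1, II.2 that the later sections actually need.
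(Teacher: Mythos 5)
Your proposal is correct, and there is nothing in the paper to diverge from: the theorem carries the attribution \cite{ov} and is stated as a summary of Dynkin's classification, with no proof given in the paper itself. Your sketch (reducible $\Rightarrow$ parabolic or symmetric via an invariant subspace and the restriction of the form; irreducible $\Rightarrow$ reductive, then simple or outer tensor factorization with the sign rule for induced forms) is exactly the standard argument underlying the cited source, and you rightly identify that the genuinely hard step --- maximality and the list of non-maximal irreducible subalgebras --- is the weight-theoretic analysis of \cite{d}, which both you and the paper invoke by citation rather than reprove.
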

Not all irreducible simple subalgebras are maximal, however, we have the following ( $\omega_r$ is $0$ if the representation is linear, $1$ if it is orthogonal, and $(-1)$ if symplectic).
\begin{theorem}[\cite{d}]\label{thm:d-irred} Irreducible simple subalgebras in classical Lie algebras are maximal, with exceptions given by the following two tables.
\begin{center}
 \begin{table}[h]
 \centering
 {\footnotesize
 \begin{tabular}{| c | c | c| }
   \hline
   \multicolumn{3}{|c|}{ \textbf{\textit{Irreducible triples}}} \\
   \hline                        
   $\text{Inclusion 1}$ & $\text{Inclusion 2}$ & $\dim$ of the representation   \\
   \hline
		$C_n\subset A_{2n-1}$ & $A_{2n-1}\subset\tilde{\mathfrak{g}}^c$ & $\binom{2n-1+k}{k}$ \\
		\hline
	           $B_n\subset A_{2n}$ & $A_{2n}\subset\tilde{\mathfrak{g}}^c$ & $ \binom{2n+1}{k}$  \\
		\hline
		$D_n\subset A_{2n-1}$ & $A_{2n-1}\subset\tilde{\mathfrak{g}}^c$ & $ \binom{2n}{k}$ \\
		\hline
		$A_n\subset A_{\frac{(n-1)(n+2)}{2}}$ & $A_{\frac{(n-1)(n+2)}{2}}\subset\tilde{\mathfrak{g}}^c$ & $ \frac{1}{8}(n-1)(n+1)(n+2)$ \\
		\hline
		$A_n\subset A_{\frac{n(n+3)}{2}}$ & $ A_{\frac{n(n+3)}{2}}\subset\tilde{\mathfrak{g}}^c$   & $\frac{1}{8}(n+1)(n+2)(n+3)$ \\
		\hline
		$B_n\subset D_{n+1}$ & $D_{n+1}\subset\tilde{\mathfrak{g}}^c$ & $ \prod_{s=1}^{n-1}\binom{k+2s-1}{k}/\binom{k+s}{k}$ \\
		\hline
 \end{tabular}
 }
 \caption{
 }
 \label{tttab3}
 \end{table}
\end{center}

\begin{center}
 \begin{table}[h]
 \centering
 {\footnotesize
 \begin{tabular}{| c | c | c| }
   \hline
   \multicolumn{3}{|c|}{ \textbf{\textit{Non-maximal irreducible subalgebras}}} \\
   \hline                        
   $\text{Type of inclusion}$ & $\text{dimension}$ & $\omega_r$   \\
   \hline
		$A_n\subset A_l$ & $3\binom{n+2}{4}$ & $0$ \\
		\hline
		$A_n\subset A_l$ & $3\binom{n+3}{4}$ & $0$ \\
		\hline
	           $B_{2n+1}\subset \mathfrak{g}^c$ & $\prod_{s=1}^{2n}{\binom{k+s-1}{k}/ \binom{k+s}{k}}$ & $ (-1)^{n+1)k}$  \\
		\hline

 \end{tabular}
 }
 \caption{
 }
 \label{tttttab2}
 \end{table}
\end{center}

\end{theorem}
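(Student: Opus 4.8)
The plan is to treat this exactly as Dynkin does, namely to reduce the maximality question to the already-available list of maximal subalgebras in Theorem \ref{thm:max-ov}. First I would fix an irreducible faithful representation $\rho\colon\mathfrak{h}^c\to\mathfrak{gl}(V)$ with $\mathfrak{h}^c$ simple. Since $\rho(\mathfrak{h}^c)$ consists of trace-zero operators it lands in $\mathfrak{sl}(V)$, and the Frobenius--Schur indicator $\omega_r\in\{0,1,-1\}$ records whether $\rho$ additionally preserves a symmetric or a skew form, hence whether the smallest classical algebra containing $\rho(\mathfrak{h}^c)$ is $\mathfrak{so}(V)$ or $\mathfrak{sp}(V)$; this determines the ambient $\mathfrak{g}^c$. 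The key elementary observation, which drives the whole argument, is that if $\rho(\mathfrak{h}^c)\subset\mathfrak{m}\subset\mathfrak{g}^c$ then $\mathfrak{m}$ already acts irreducibly on $V$, because it contains the irreducible $\rho(\mathfrak{h}^c)$. Thus every intermediate subalgebra is irreducible, and a failure of maximality must produce a maximal \emph{irreducible} $\mathfrak{m}$ with $\rho(\mathfrak{h}^c)\subsetneq\mathfrak{m}\subsetneq\mathfrak{g}^c$.

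By Theorem \ref{thm:max-ov} such an $\mathfrak{m}$ is of type II.1 (simple irreducible) or II.2 (tensor). In the tensor case one has $V=V_1\otimes V_2$ with $\dim V_i>1$ and $\mathfrak{m}=\mathfrak{m}_1\oplus\mathfrak{m}_2$; I would project the simple $\mathfrak{h}^c$ to each summand and use that a simple algebra maps to a summand either by zero or injectively. A zero projection would make $\rho$ reducible, so both projections are injective and $\rho\cong\rho_1\otimes\rho_2$ is a genuine tensor product of two non-trivial irreducibles of $\mathfrak{h}^c$; deciding when an irreducible representation factors this way eliminates this branch except for a short list. In the simple case II.1 one obtains a chain $\mathfrak{h}^c\subset\mathfrak{m}\subset\mathfrak{g}^c$ of irreducible simple subalgebras, meaning $\rho$ is the composite of a fixed irreducible embedding $\mathfrak{h}^c\hookrightarrow\mathfrak{m}$ with an irreducible representation of $\mathfrak{m}$. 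Enumerating those embeddings whose composite is still irreducible of the stated type is exactly what produces the ``inclusion 1 / inclusion 2'' columns of Table \ref{tttab3}; for instance the first row, $C_n\subset A_{2n-1}$, is the standard $\mathfrak{sp}(2n)\subset\mathfrak{sl}(2n)$ followed by a symmetric-power representation of $\mathfrak{sl}(2n)$ of the indicated dimension $\binom{2n-1+k}{k}$.

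The remaining work, and the real heart of the argument, is the explicit bookkeeping, so I expect \emph{completeness of the enumeration} to be the main obstacle: one must show that no chain or tensor factorization other than those in Tables \ref{tttab3} and \ref{tttttab2} can occur. Concretely, for each candidate I would compute highest weights, verify that the composite representation is irreducible of the required dimension, and track $\omega_r$ to confirm the ambient algebra is the claimed $A_l$, $B_l$, $C_l$ or $D_l$. To make the search finite I would organize it by the type of $\mathfrak{h}^c$ and the highest weight of $\rho$, and cut down the infinitely many representations to finitely many families by dimension comparisons against the next larger classical algebra, exactly along the lines of \cite{d}. This combinatorial closure — ruling out all intermediate irreducible subalgebras except the tabulated exceptions — is the step that carries essentially all the difficulty, the preceding reduction being formal.
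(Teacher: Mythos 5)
There is a genuine gap, and it is the one you yourself flag. First, note that the paper does not prove this statement at all: Theorem \ref{thm:d-irred} is quoted verbatim from Dynkin's classification \cite{d} (as the bracketed citation in the theorem header indicates), so the only "proof" available to compare against is Dynkin's original argument. Your reduction is indeed the correct skeleton of that argument: an intermediate subalgebra $\rho(\mathfrak{h}^c)\subsetneq\mathfrak{m}\subsetneq\mathfrak{g}^c$ automatically acts irreducibly (and, by Schur plus trace-freeness, is semisimple), so by Theorem \ref{thm:max-ov} a failure of maximality forces $\mathfrak{m}$ to be of type II.1 (simple irreducible) or II.2 (tensor); the tensor branch is handled by the injectivity-of-projections observation, and the simple branch becomes a question about chains of irreducible embeddings. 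All of that is correct and matches Dynkin's setup.

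But this reduction is, as you say, "formal," and what remains after it is the entire mathematical content of the theorem: proving that the chains and tensor factorizations occurring are \emph{exactly} those in Tables \ref{tttab3} and \ref{tttttab2}, with the stated dimensions $\binom{2n-1+k}{k}$, $\frac{1}{8}(n-1)(n+1)(n+2)$, etc., and the stated Frobenius--Schur indicators $\omega_r$ determining the ambient classical type. Your proposal describes how one \emph{would} organize this search (by highest weights, dimension bounds, indicator bookkeeping) but does not execute any of it, does not exhibit why, say, $C_n\subset A_{2n-1}\subset\tilde{\mathfrak{g}}^c$ via symmetric powers genuinely fails maximality while other composites do not, and does not establish completeness of the enumeration — the step occupying the bulk of Dynkin's paper. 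A proof attempt that reduces a classification theorem to "the classification, which I expect to be the main obstacle" has not proved it; as it stands your text is a correct reading of the theorem's structure and a plan of attack, not a proof.
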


\subsection{Steps of proof of Theorem \ref{thm:class}}

\begin{proposition}\label{prop:rank} If $\mathfrak{h}^c$ is a simple non-regular subalgebra in $\mathfrak{g}^c$ then the inequality 
\begin{equation}\label{eq:rank}
\operatorname{rank}\mathfrak{h}^c\leq\frac{1}{2}(\operatorname{rank}\mathfrak{g}^c+1)
\end{equation}
 holds for all pairs $(\mathfrak{g}^c,\mathfrak{h}^c)$ except the following:
\begin{enumerate}
\item $\mathfrak{h}^c=B_{p-1}\subset D_p$,
\item $\mathfrak{h}^c\subset B_{p-1}\subset D_p\subset B_n$, where $D_p\subset B_n$ is regular,
\item $\mathfrak{h}^c\subset B_{p-1}\subset D_p\subset D_n$, where $D_p\subset D_n$ is regular.
\end{enumerate}
\end{proposition}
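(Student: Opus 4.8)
The plan is to build a chain of maximal inclusions from $\mathfrak{h}^c$ up to $\mathfrak{g}^c$ and to track how $\operatorname{rank}$ drops at each step, isolating the one type of non-regular maximal inclusion that is too ``thin'' to force the desired inequality. First I would fix a chain $\mathfrak{h}^c=\mathfrak{m}_0\subsetneq\mathfrak{m}_1\subsetneq\cdots\subsetneq\mathfrak{m}_s=\mathfrak{g}^c$ in which each $\mathfrak{m}_i$ is a maximal subalgebra of $\mathfrak{m}_{i+1}$. The first point to record is that regularity is transitive: if $\mathfrak{m}_i$ is regular in $\mathfrak{m}_{i+1}$ and $\mathfrak{m}_{i+1}$ is regular in $\mathfrak{g}^c$, then $\mathfrak{m}_i$ is regular in $\mathfrak{g}^c$, because a Cartan subalgebra of $\mathfrak{g}^c$ normalizing $\mathfrak{m}_{i+1}$ meets $\mathfrak{m}_{i+1}$ in a Cartan subalgebra, and all Cartan subalgebras of $\mathfrak{m}_{i+1}$ are conjugate. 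Hence non-regularity of $\mathfrak{h}^c$ in $\mathfrak{g}^c$ forces at least one \emph{non-regular} maximal step in the chain. Since ranks are monotone along inclusions, we always have $\operatorname{rank}\mathfrak{m}_0\leq\operatorname{rank}\mathfrak{m}_i$ and $\operatorname{rank}\mathfrak{m}_{i+1}\leq\operatorname{rank}\mathfrak{g}^c$.

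Call a maximal inclusion $\mathfrak{m}\subsetneq\mathfrak{m}'$ \emph{efficient} if $\operatorname{rank}\mathfrak{m}\leq\tfrac12(\operatorname{rank}\mathfrak{m}'+1)$. The key observation is that the presence of even one efficient non-regular step in the chain already yields the Proposition: combining monotonicity with efficiency at that step gives $\operatorname{rank}\mathfrak{h}^c\leq\operatorname{rank}\mathfrak{m}_i\leq\tfrac12(\operatorname{rank}\mathfrak{m}_{i+1}+1)\leq\tfrac12(\operatorname{rank}\mathfrak{g}^c+1)$. Thus the whole problem reduces to classifying the non-regular maximal steps that fail to be efficient. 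Invoking Theorem \ref{thm:max-ov}, I would dispose of the classes one by one: parabolic steps (I.1) are regular (a parabolic contains a Cartan subalgebra), so they never contribute non-regularity; among symmetric steps (I.2) the equal-rank ones, namely $D_a\oplus D_b\subset D_{a+b}$, $D_a\oplus B_b\subset B_{a+b}$, $C_a\oplus C_b\subset C_{a+b}$ and the $A$-type Levi inclusions, are regular.

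For the irreducible simple (II.1) and tensor (II.2) steps I expect to prove efficiency through a dimension count, and this is where Dynkin's tables enter. If $\mathfrak{m}$ of rank $r$ acts irreducibly on the defining space $V$ of $\mathfrak{m}'$ through a representation that is not a defining representation of a classical algebra, then by Theorem \ref{thm:d-irred} together with Tables \ref{tttab3} and \ref{tttttab2} the dimension $\dim V$ is bounded below strongly enough that, using $\operatorname{rank}\mathfrak{m}'=\dim V-1$ in type $A$ and $\operatorname{rank}\mathfrak{m}'$ equal to $\tfrac12\dim V$ up to $1$ in types $B,C,D$, one gets $\operatorname{rank}\mathfrak{m}\leq\tfrac12(\operatorname{rank}\mathfrak{m}'+1)$; the extremal (near-equality) cases are exactly $\mathfrak{so}(V),\mathfrak{sp}(V)\subset\mathfrak{sl}(V)$. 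Tensor steps are even more favourable, since $\dim V=\dim V_1\cdot\dim V_2\geq 2\dim V_1$ drops the rank by a factor at least $2$. I anticipate that the genuine work here is eliminating low-rank coincidences, which is precisely what the tables are for.

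It then remains to treat the non-equal-rank symmetric steps, and the only one occurring among classical symmetric subalgebras is $\mathfrak{so}(\text{odd})\subset\mathfrak{so}(\text{even})$, that is $B_{p-1}\subset D_p$, which drops the rank by just one and is therefore \emph{not} efficient. To finish I would establish a rerouting lemma: a block embedding $\mathfrak{so}(m)\subset\mathfrak{so}(n)$ is regular whenever $m$ and $n$ have the same parity (via $B_a\oplus D_b$, $D_a\oplus D_b$, or $D_a\oplus B_b$ blocks), so the sole source of non-regularity is an odd-into-even jump, which factors as a regular inclusion $\mathfrak{so}(2a+1)\subset\mathfrak{so}(2c-1)$ followed by a single copy of $B_{c-1}\subset D_c$. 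Consequently, in any chain containing no efficient step all non-regularity can be concentrated into one inclusion $B_{p-1}\subset D_p$, with $\mathfrak{h}^c\subseteq B_{p-1}$ regular and $D_p\subseteq\mathfrak{g}^c$ regular, and $\mathfrak{g}^c\in\{D_p,B_n,D_n\}$; these are exactly the listed exceptions (1)--(3). The main obstacle will be the dimension-bound case analysis ruling out exceptional low-rank coincidences in the irreducible and tensor cases, together with the careful verification of the regularity and rerouting claims for every classical symmetric inclusion.
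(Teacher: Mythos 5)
Your strategy coincides in substance with the paper's: both arguments run on Theorem \ref{thm:max-ov} and Dynkin's Theorem \ref{thm:d-irred}, dispose of parabolic and equal-rank symmetric inclusions as regular, control irreducible and tensor inclusions by dimension counts from the tables, and isolate $B_{p-1}\subset D_p$ as the one problematic step. The paper organizes this as a dichotomy (irreducible versus reducible representation, with an informal induction through maximal subalgebras in the reducible case), whereas your chain-of-maximal-inclusions bookkeeping, the transitivity-of-regularity lemma, the ``efficient step'' reduction and the parity rerouting lemma make that induction explicit; this is a cleaner and more rigorous packaging of the same route, not a different proof.

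There is, however, one concretely false intermediate claim. The spin representation of $B_3$ is $8$-dimensional and orthogonal, so it yields an \emph{irreducible}, maximal, non-regular inclusion $B_3\subset D_4$ of class II.1 with $\operatorname{rank}B_3=3>\tfrac{1}{2}(\operatorname{rank}D_4+1)=\tfrac{5}{2}$. Hence it is not true that every irreducible simple step is efficient, nor that the near-equality cases are exactly $\mathfrak{so}(V),\mathfrak{sp}(V)\subset\mathfrak{sl}(V)$: your classification of inefficient non-regular maximal steps (``only the symmetric $B_{p-1}\subset D_p$'') is missing this member, and it is a low-rank coincidence that cannot be ``eliminated,'' only identified. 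The final statement survives because a triality outer automorphism of $D_4$ carries the spin embedding onto the standard $\mathfrak{so}(7)\subset\mathfrak{so}(8)$, so the pair $(D_4,B_3)$ is already exception (1); but without making that identification explicit, your concluding step --- that chains with no efficient step concentrate all non-regularity into \emph{symmetric} $B_{p-1}\subset D_p$ inclusions --- is not established. A smaller omission of the same kind: the non-equal-rank classical symmetric subalgebras include not only $\mathfrak{so}(\text{odd})\subset\mathfrak{so}(\text{even})$ but also $B_a\oplus B_b\subset D_{a+b+1}$ with $a,b\geq 1$, which is non-regular and inefficient once $\max(a,b)\geq 2$; your rerouting lemma does absorb these, but they must appear in the enumeration of symmetric steps for the case analysis to be complete. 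With these two patches your proposal matches the paper's proof.
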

\begin{proof} Consider first the case when $\mathfrak{h}^c$ is irreducible. If $\mathfrak{h}^c\hookrightarrow \mathfrak{g}^c$ is a fundamental representation, Table \ref{tttab1} shows that (\ref{eq:rank}) does hold (with exceptions 1)-3)). Weyl dimension formula shows that the same is valid for any irreducible representation. If $\mathfrak{h}^c$ is reducible, one uses Theorem \ref{thm:max-ov} and obtains an inclusion $\mathfrak{h}^c\hookrightarrow\tilde{\mathfrak{h}}^c_1$, where $\tilde{\mathfrak{h}}^c_1$ denotes the simple summand of some maximal subalgebra $\tilde{\mathfrak{h}}^c$ (one can also consult Table \ref{tab-max}). Since $\operatorname{rank}\tilde{\mathfrak{h}}^c_1<\operatorname{rank}\mathfrak{g}^c$, one can argue as follows. If $\mathfrak{h}^c\subset\tilde{\mathfrak{h}}^c$ where $\tilde{\mathfrak{h}}^c$ is parabolic, then $\mathfrak{h}^c$ is not regular in $\tilde{\mathfrak{h}}^c$ and one can argue by  induction. If $\tilde{\mathfrak{h}}^c$ is symmetric, then the following two possibilities may occur: either $\mathfrak{h}^c$ is still regular in $\mathfrak{g}^c$, and we apply already proved part of Theorem \ref{thm:class}, or it is one of the exceptions 1)-3). If $\mathfrak{h}^c\subset\tilde{\mathfrak{h}}^c$ is a tensor representation, then the difference between $\operatorname{rank}\mathfrak{h}^c$ and $\mathfrak{g}^c$ is even bigger.
\end{proof}
\begin{proposition}\label{prop:inequality} If $\mathfrak{h}^c\hookrightarrow\mathfrak{g}^c$ and $\mathfrak{l}^c\hookrightarrow\mathfrak{g}^c$ are embeddings of simple and non-regular $\mathfrak{h}^c$ and $\mathfrak{l}^c$, then for any real forms $\mathfrak{h}\hookrightarrow\mathfrak{g}$ and $\mathfrak{l}\hookrightarrow\mathfrak{g}$ either  the inequality $d(\mathfrak{g})>d(\mathfrak{h})+d(\mathfrak{l})$ holds (and hence the triple $(\mathfrak{g},\mathfrak{h},\mathfrak{l})$ cannot yield compact Clifford-Klein forms), or $\mathfrak{h}^c=B_{p-1}$ is embedded into $\mathfrak{g}$ as in cases 1)-3) in Proposition \ref{prop:rank}.
However, none of the embeddings 1)-3) above can yield compact Clifford-Klein forms. 
\end{proposition}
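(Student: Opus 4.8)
The plan is to prove the inequality $d(\mathfrak{g})>d(\mathfrak{h})+d(\mathfrak{l})$ by reducing it to a dimension count governed entirely by the complexified embeddings, and then to dispatch the exceptional embeddings $B_{p-1}\subset D_p$ of cases 1)--3) separately. First I would recall that $d(\mathfrak{g})=\dim\mathfrak{p}$ is the non-compact dimension of the chosen real form, and that for a well-embedded triple the key quantities are controlled by the root-theoretic data of $\mathfrak{g}^c,\mathfrak{h}^c,\mathfrak{l}^c$. The crucial structural input is Proposition~\ref{prop:rank}: outside the exceptional list, both $\mathfrak{h}^c$ and $\mathfrak{l}^c$ satisfy $\operatorname{rank}\mathfrak{h}^c\le\tfrac12(\operatorname{rank}\mathfrak{g}^c+1)$ and likewise for $\mathfrak{l}^c$. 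Adding these gives $\operatorname{rank}\mathfrak{h}^c+\operatorname{rank}\mathfrak{l}^c\le\operatorname{rank}\mathfrak{g}^c+1$, a strong constraint saying the two simple subalgebras are jointly ``too small'' in rank to fill up $\mathfrak{g}^c$.

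\medskip

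The core of the argument is to convert this rank deficiency into a dimension deficiency for the non-compact dimensions. I would proceed case by case through the classical families $\mathfrak{g}^c=A_n,B_n,C_n,D_n$, using the tables of real forms and of non-compact dimensions (Tables~\ref{tttab3}--\ref{tab-max} and the $d(\cdot)$ data from \cite{ov}). For each admissible pair of ranks $(\operatorname{rank}\mathfrak{h}^c,\operatorname{rank}\mathfrak{l}^c)$ with sum at most $\operatorname{rank}\mathfrak{g}^c+1$, the non-compact dimension $d(\mathfrak{h})$ of any real form is bounded above by the largest non-compact dimension among real forms of $\mathfrak{h}^c$, which for a simple Lie algebra of rank $r$ grows strictly slower than $d(\mathfrak{g})$ grows in $\operatorname{rank}\mathfrak{g}^c$. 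The mechanism is that $d(\mathfrak{g})$ is quadratic in $\operatorname{rank}\mathfrak{g}^c$ (roughly half of $\dim\mathfrak{g}-\operatorname{rank}\mathfrak{g}$), so splitting the rank budget between two subalgebras whose ranks sum to barely more than $\operatorname{rank}\mathfrak{g}^c$ forces $d(\mathfrak{h})+d(\mathfrak{l})$ to fall strictly below $d(\mathfrak{g})$ by a convexity/superadditivity estimate. I would make this quantitative: write $d$ as an explicit function of rank and signature for each family, then verify $d(\mathfrak{g})>d(\mathfrak{h})+d(\mathfrak{l})$ over the finitely many rank patterns, paying attention to the maximal real forms (those of split or quasi-split type giving the largest $d$).

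\medskip

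The second, genuinely separate part is to rule out the exceptional embeddings 1)--3), where the rank inequality fails because $B_{p-1}\subset D_p$ has nearly full rank. Here the direct dimension inequality is not available, so I would instead invoke Kobayashi's obstruction, Theorem~\ref{thm:kob-d}. In these cases $\mathfrak{h}$ is a real form of $B_{p-1}$ sitting inside a real form of $D_p$ (possibly further inside $B_n$ or $D_n$ via a regular embedding), and one can exhibit a reductive subgroup $G'$ with $\mathfrak{a}_{g'}\subseteq\mathfrak{a}_h$ but $d(G')>d(H)$: concretely, the regular $D_p$ itself (or the ambient $\mathfrak{so}$ factor) has strictly larger non-compact dimension than $B_{p-1}$ while sharing or containing the relevant maximal split torus. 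This violates the necessary condition of Theorem~\ref{thm:kob-d}, so the homogeneous space $G/H$ admits no compact Clifford-Klein form at all, a fortiori no standard one. I would verify the two torus/dimension conditions of Theorem~\ref{thm:kob-d} for each of 1), 2), 3).

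\medskip

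The main obstacle I anticipate is the first part: turning the clean rank bound into a rigorous strict dimension inequality uniformly across all families and all real forms. The danger is boundary cases where the ranks sum to exactly $\operatorname{rank}\mathfrak{g}^c+1$ and where the real forms of $\mathfrak{h}$ and $\mathfrak{l}$ are chosen to maximize $d(\mathfrak{h})+d(\mathfrak{l})$; there the gap $d(\mathfrak{g})-d(\mathfrak{h})-d(\mathfrak{l})$ could be small, and one must confirm it stays strictly positive rather than vanishing (which would signal an actual decomposition, i.e. a Table~\ref{tttab} entry). I would therefore isolate these extremal rank patterns, compute $d$ exactly for the maximal real forms, and check positivity directly, keeping in mind that equality would correspond precisely to the genuine Lie algebra decompositions already classified in \cite{on} and excluded here by the non-regularity hypothesis.
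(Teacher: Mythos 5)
Your first part (the dimension inequality) follows essentially the paper's route: the rank bound of Proposition \ref{prop:rank} plus a case-by-case check of $d(\mathfrak{g})>d(\mathfrak{h})+d(\mathfrak{l})$ over the classical families, taking for $\mathfrak{h}$ and $\mathfrak{l}$ the real forms maximizing $d$ for a given real rank (the paper reduces to three dominating types $\mathfrak{so}(a,2t+1-a)$, $\mathfrak{sp}(a,t-a)$, $\mathfrak{sp}(t,\mathbb{C})$, substitutes exceptional real forms by classical ones with the same real rank and larger $d$, and grinds through the pairs). Your sketch leaves the actual verification to ``finitely many rank patterns,'' which is exactly the content of Subsection \ref{liczenie}, so no disagreement there.

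The genuine gap is in your treatment of the exceptional cases 1)--3). You claim that Kobayashi's Theorem \ref{thm:kob-d} alone finishes them: take $G'$ to be the real form of the intermediate regular $D_p$, check $\mathfrak{a}_{g'}\subseteq\mathfrak{a}_h$ and $d(G')>d(H)$, and conclude that $G/H$ admits no compact Clifford-Klein form at all. This fails in the extremal signature case. Writing $\mathfrak{h}=\mathfrak{so}(q,2p+1-q)\subset\mathfrak{g}=\mathfrak{so}(m,2n+2-m)$, the obstruction of Theorem \ref{thm:kob-d} only bites when $2p+1-q<2n+2-m$: one then enlarges the negative part of the signature to build $G'$ with the same split torus and larger $d$. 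When $2p+1-q=2n+2-m$, i.e.\ $\mathfrak{so}(q,N)\subset\mathfrak{so}(m,N)$, no such $G'$ exists --- any enlargement increases the real rank, violating $\mathfrak{a}_{g'}\subseteq\mathfrak{a}_h$ --- and your blanket conclusion is in fact false there: $\mathfrak{so}(1,2n)\subset\mathfrak{so}(2,2n)$ is exactly of this shape (case 1) with $p=n+1$), yet $SO(2,2n)/SO(1,2n)$ \emph{does} admit standard compact Clifford-Klein forms (Table \ref{tttab}, with $L=SU(1,n)$); consistently, Theorem \ref{thm:kob-d} can produce no obstruction for it. The paper therefore uses Theorem \ref{thm:kob-d} only to force the equality $2p+1-q=2n+2-m$, then observes that the resulting pair $(\mathfrak{so}(m,N),\mathfrak{so}(q,N))$ is (equivalent to) a symmetric pair, and invokes Tojo's classification \cite{tojo} of standard compact Clifford-Klein forms of symmetric spaces to conclude. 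This last reduction-to-symmetric-pairs step, and the appeal to \cite{tojo}, is the missing idea in your proposal; without it the extremal case is untouched, and with your stated conclusion the argument would contradict known examples.
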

\begin{proof} The proof of the first part of this  Proposition is  a rather complicated case-by case calculation of non-compact dimensions and  we postpone it until  subsection \ref{liczenie}. Here we settle the cases of embeddings 1)-3) and explain the method of computing $d(\mathfrak{h})$ and $d(\mathfrak{l})$ for the real forms of simple irreducible subalgebras which are not of types 1)-3).
\vskip6pt
\noindent {\it Cases 1)-3)}. 
First, consider the embedding 
$$\mathfrak{h}^c\subset B_n\subset D_{n+1},$$
where $\mathfrak{h}^c$ is a regular simple subalgebra of $B_n$, and $B_n$ is a subalgebra of all fixed points of an outer automorphism of $D_{n+1}$. The pair $(\mathfrak{g},\mathfrak{h})$ cannot generate a standard compact Clifford-Klein form such that $\mathfrak{g}\neq \mathfrak{h}+\mathfrak{l}$ (for some $\mathfrak{l}\subset \mathfrak{g}$).
Choose a Cartan subalgebra $\mathfrak{c}\subset D_{n+1}$ such that $\mathfrak{c}\cap\mathfrak{h}^{c}$ is a Cartan subalgebra of $\mathfrak{h}^{c}$. The relation between roots and root vectors for the embedding $B_n\subset D_{n+1}$ is well known. Following \cite{he}, p. 507, we write $H_1,...,H_{n+1}$ for the simple coroots and the root vectors corresponding to them as $X_1,...,X_{n+1},$. The opposite root vectors are denoted by $Y_1,...,Y_{n+1}$. The following vectors determine the embedding of $B_n$ into $D_{n+1}$:
$$H_1,...,H_{n-1},H_n+H_{n+1},$$
$$ X_1,...,X_{n-1},X_n+X_{n+1}, Y_1,...,Y_{n-1},Y_n+Y_{n+1}.$$
Since  any subalgebra $\mathfrak{h}^c$ which is regular in $B_n$ is obtained from the extended Dynkin diagram of $B_n$ by deleting some vertices, we can write all the possibilities for the simple roots and root vectors of $\mathfrak{h}^c$ {\it in terms of the simple roots, or corresponding coroots and root vectors expressed via the data for the type $D_{n+1}$.} The extended Dynkin diagram for $B_n$ is given by vertices
$$\alpha_0,\alpha_1,...,\alpha_{n-1},\bar\alpha_{n} : = \frac{1}{2} (\alpha_{n}+\alpha_{n+1})$$
 
\begin{equation}
\alpha_0=-(\alpha_1+2\alpha_2+\cdots+2\alpha_{n-1}+(\alpha_n+\alpha_{n+1})) .
\end{equation}\label{eq:ext}
Notice that $\alpha_{0}$ is a root of $D_{n+1} .$  Moreover, it is the {\it minimal root}  of $D_{n+1}$ (see \cite{bouu}, Table IV). We have shown the following:  the system of simple roots for any $\mathfrak{h}^c$  is obtained by deleting some of the roots from (7)  in a way to get a connected diagram. In particular, if $\mathfrak{h}^c$ is simple and not regular in $\mathfrak{g}^{c} ,$ then $\mathfrak{h}^{c}$ is obtained by deleting $\alpha_0,\alpha_1,...,\alpha_{t},$ $1\leq t<n,$ because otherwise $\mathfrak{h}^c$ would be regular in $D_{n+1}$. Therefore, $\mathfrak{h}^c$ is necessarily of the form $B_p$. Looking at \cite{ov}, Table 4, one can see that there is only one possible real form (up to conjugation):
$$\mathfrak{h}= \mathfrak{so}(q,2p+1-q).$$

Let $\theta$ be a Cartan involution of $\mathfrak{g}$ such that $\theta (\mathfrak{h}) = \mathfrak{h} .$ We may assume that $\theta (\mathfrak{c})=\mathfrak{c} .$ It follows from the classification of involutive automorphisms (see Section 1.4, Chapter 4 in \cite{ov}) and the labels of the extended Dynkin diagram of $D_{n+1}$ that $\theta = \operatorname{Ad} I_{m,2n+2-m}$ for some $p\in \{ 2,...,n+1 \} $ (\cite{ov}, Theorem 1.4 (2), Chapter 4). Indeed, the Kac diagram of $\theta$ cannot be given by two $1's$ on  the opposite sides of the extended Dynkin diagram of $D_{n+1}$ (because $\theta (\mathfrak{h}^{c}) = \mathfrak{h}^{c}$).
Therefore, 
$$\mathfrak{h}=\mathfrak{so}(q, 2p+1-q)\subset \mathfrak{g}=\mathfrak{so}(m,2n+2-m),$$
 where $q<m\leq n+1.$
 By Theorem \ref{thm:kob-d} we have $2p+1-q=2n+2-m.$ Indeed, if $2p+1-q<2n+2-m$, one can construct $\mathfrak{h}\subset\mathfrak{g}'\subset\mathfrak{g}$ with $\mathfrak{a}_{g'}=\mathfrak{a}_h$ and $d(\mathfrak{g}')>d(\mathfrak{h})$ by increasing $2p+1-q$ but still keeping it less than $2n+2-m$. One can do this analyzing separately all the possible choices of $q$ and $m$ even or odd, and using the formulas for the non-compact dimensions from Table 2. The inequality $2p+1-q>2n+2-m$ is not possible for dimensional reasons.
This yields   an embedding
$$\mathfrak{so}(q,2n+2-m)\subset \mathfrak{so}(m,2n+2-m).$$
Since $\theta(\mathfrak{h})\subset\mathfrak{h}$, we get the inclusion of maximal compact subalgebras of $\mathfrak{h}$ and $\mathfrak{g}$:
$$\mathfrak{so}(q)\oplus\mathfrak{so}(2n+2-m)\subset\mathfrak{so}(m)\oplus\mathfrak{so}(2n+2-m).$$
Therefore there is an obvious inclusion
$$\mathfrak{so}(m-q)\oplus \mathfrak{so}(q,2n+2-m)\subset \mathfrak{so}(m,2n+2-m) .$$
We see that the pair $(\mathfrak{so}(m,2n+2-m) , \mathfrak{so}(m-q)\oplus \mathfrak{so}(q,2n+2-m) )$ is symmetric and $(\mathfrak{so}(m,2n+2-m) , \mathfrak{so}(m-q)\otimes \mathfrak{so}(q,2n+2-m) )$ induces a compact Clifford-Klein form if and only if $(\mathfrak{so}(m,2n+2-m) , \mathfrak{so}(q,2n+2-m) )$ induces a compact Clifford-Klein form. Therefore, \cite{tojo} yields the desired result.

Now we are ready to explain the method of calculating the non-compact dimensions for all  embeddings of simple non-regular Lie subalgebras. 
\begin{enumerate}
\item By Proposition \ref{prop:rank} the inequality (\ref{eq:rank}) holds.
\item looking at Tables \ref{tttab2},\ref{tttab1},\ref{tttttab2}, \ref{tttab3} and using relations between real and complex ranks together with the constraint given by inequality (\ref{eq:rank}), one obtains the desired inequality for the non-compact dimensions.

\end{enumerate}  
As we have mentioned, the details of the calculation are postponed until Subsection \ref{liczenie}.

\end{proof}
Clearly, Proposition \ref{prop:inequality} exhaust all the possibilities for non-regular triples $(\mathfrak{g}^c,\mathfrak{h}^c,\mathfrak{l}^c)$ of simple complex Lie algebras and their real forms, which completes the proof of Theorem \ref{thm:class}.

\subsubsection{Proof of Proposition \ref{prop:inequality}}\label{liczenie}

 We need to list all simple real forms of non-compact type from \cite{ov}, Table 4, in the form suitable for the calculations of the non-compact dimensions. By a slight abuse of  notation we will treat simple but not absolutely simple real Lie algebras of rank 2n as Lie algebras of type $n.$ For instance $\mathfrak{g}=\mathfrak{sl}(n+1,\mathbb{C})$ is classified as type $A_n .$  Notice that in this case $\mathfrak{g}^{c} = \mathfrak{sl}(n+1,\mathbb{C}) \oplus \mathfrak{sl}(n+1,\mathbb{C}) .$
\vskip6pt
\centerline{\bf Simple real forms of non-compact type}
\vskip6pt
\textbf{$A_{t}$}: 
\begin{itemize}
	\item $\mathfrak{su} (a,t+1-a) ,$ $a\leq \frac{t+1}{2},$ $d:=d(\mathfrak{g}) = 2at+2a-2a^{2} ,$ $r:= \textrm{rank}_{\mathbb{R}}(\mathfrak{g})=a,$
	\item $\mathfrak{su}^{\ast} (t+1) ,$ $\frac{t-1}{2}\in \mathbb{N},$ $d= \frac{1}{2} (t^{2}+t-2),$ $r=\frac{t-1}{2},$
	\item $\mathfrak{sl} (t+1,\mathbb{R}) ,$  $d= \frac{1}{2} t(t+3),$ $r=t,$
	\item $\mathfrak{sl} (t+1,\mathbb{C}) ,$ $d= t^{2}+2t,$ $r=t.$
\end{itemize}

\textbf{$B_{t}$}:
\begin{itemize}
	\item $\mathfrak{so} (a,2t+1-a) ,$ $a<t+1,$ $d= 2at+a-a^{2},$ $r=a,$
	\item $\mathfrak{so} (2t+1,\mathbb{C}) ,$ $d= t(2t+1),$ $r=t.$
\end{itemize}

\textbf{$C_{t}$}:
\begin{itemize}
	\item $\mathfrak{sp} (a,t-a) ,$ $a\leq\frac{t}{2},$ $d= 4at-4a^{2},$ $r=a,$
	\item $\mathfrak{sp} (t,\mathbb{R}) ,$  $d= t^{2}+t,$ $r=t,$
	\item $\mathfrak{sp} (t,\mathbb{C}) ,$  $d= 2t^{2}+t,$ $r=t,$
\end{itemize}

\textbf{$D_{t}$}:
\begin{itemize}
	\item $\mathfrak{so} (a,2t-a) ,$ $a<t+1,$ $d= 2at-a^{2},$ $r=a,$
	\item $\mathfrak{so}^{\ast} (t) ,$ $d= t^{2}-t,$ $r=[\frac{t}{2}],$
	\item $\mathfrak{so} (2t,\mathbb{C}) ,$  $d= 2t^{2}-t,$ $r=t.$
\end{itemize}

\textbf{$G_{2}$} (simplest representation $G_{2}\subset B_{3}$):
\begin{itemize}
	\item $\mathfrak{g}_{2(2)} ,$  $d= 8,$ $r=2,$ 
	\item $\mathfrak{g}_{2}(\mathbb{C}) ,$  $d= 14,$ $r=2,$ 
\end{itemize}

\textbf{$F_{4}$} (simplest representation $F_{4}\subset D_{13}$):
\begin{itemize}
	\item $\mathfrak{f}_{4(4)} ,$  $d= 28,$ $r=4,$ 
	\item $\mathfrak{f}_{4(-2)} ,$  $d= 16,$ $r=1,$ 
	\item $\mathfrak{f}_{4}(\mathbb{C}) ,$  $d= 52,$ $r=4,$ 
\end{itemize}

\textbf{$E_{6}$} (simplest representation $E_{6}\subset A_{26}$):
\begin{itemize}
	\item $\mathfrak{e}_{6(6)} ,$  $d= 42,$ $r=6,$ 
	\item $\mathfrak{e}_{6(2)} ,$  $d= 40,$ $r=4,$ 
	\item $\mathfrak{e}_{6(-14)} ,$  $d= 32,$ $r=2,$ 
	\item $\mathfrak{e}_{6(-26)} ,$  $d= 26,$ $r=2,$ 
	\item $\mathfrak{e}_{6}(\mathbb{C}) ,$  $d= 78,$ $r=6,$ 
\end{itemize}

\textbf{$E_{7}$} (simplest representation $E_{7}\subset C_{56}$):
\begin{itemize}
	\item $\mathfrak{e}_{7(7)} ,$  $d= 70,$ $r=7,$ 
	\item $\mathfrak{e}_{7(-5)} ,$  $d= 64,$ $r=4,$ 
	\item $\mathfrak{e}_{7(-25)} ,$  $d= 54,$ $r=3,$ 
	\item $\mathfrak{e}_{7}(\mathbb{C}) ,$  $d= 133,$ $r=7,$ 
\end{itemize}

\textbf{$E_{8}$} (simplest representation $E_{8}\subset D_{124}$):
\begin{itemize}
	\item $\mathfrak{e}_{8(8)} ,$  $d= 128,$ $r=8,$ 
	\item $\mathfrak{e}_{8(-24)} ,$  $d= 112,$ $r=4,$
	\item $\mathfrak{e}_{8}(\mathbb{C}) ,$  $d= 248,$ $r=8,$ 
\end{itemize}

Now we make some extra analysis of this list.
\vskip6pt
\centerline{\bf Observations based on the list above}
\vskip6pt
\begin{enumerate}
	\item For all classical types with $\textrm{rank}=t$ and $\textrm{rank}_{\mathbb{R}}=a$ one of the following three algebras has the highest number $d:=d(\mathfrak{g})$
	 \begin{itemize}
		\item I$:=\mathfrak{so} (a,2t+1-a) ,$ $d= 2at+a-a^{2},$
		\item II$:\mathfrak{sp} (a,t-a) ,$ $d= 4at-4a^{2},$
		\item III$:=\mathfrak{sp} (t,\mathbb{C}) ,$  $d= 2t^{2}+t.$ 
	 \end{itemize}
	\item For each real form of type $F_{4},E_{6},E_{7},E_{8}$ one can find a real form of type I or II with the same real rank, with greater or equal $d$ and with rank at most half of the rank of the simplest representation. For instance consider type $F_{4}.$ 
	 \begin{itemize}
		\item $\mathfrak{f}_{4(4)}$ ($d=28,$ $r=4$) has the simplest representation in $D_{13}$ (after complexification). One can substitute $\mathfrak{f}_{4(4)}$ with $\mathfrak{so}(4,9)$ ($d=36,$ $r=4$) which is of the type $B_{6}.$
		\item $\mathfrak{f}_{4(-2)}$ ($d=16,$ $r=1$) has the simplest representation in $D_{13}$ (after complexification). One can substitute $\mathfrak{f}_{4(-2)}$ with $\mathfrak{sp}(1,5)$ ($d=20,$ $r=1$) which is of the type $C_{6}.$
		\item $\mathfrak{f}_{4}(\mathbb{C})$ ($d=52,$ $r=4$) has the simplest representation in $D_{13}\oplus D_{13}\subset D_{26}$ (after complexification). One can substitute $\mathfrak{f}_{4}(\mathbb{C})$ with $\mathfrak{so}(4,21)$ ($d=83,$ $r=4$) which is of the type $B_{12}.$
	 \end{itemize}
	\item For $\mathfrak{g}_{2}(\mathbb{C})$ one can find a real form of type I ($\mathfrak{so}(2,7)$) with the same real rank, and  $d$ the same or greater, and with rank {\it at most half of the rank} of the representation into an algebra of rank $8$ or greater. As the simplest representation of $\mathfrak{g}_{2}(\mathbb{C})$ is $B_{6},$  in each case one has to verify separately the embeddings of $\mathfrak{g}_{2}(\mathbb{C})$ into some Lie algebras of rank $6$ and $7.$
	\item For $\mathfrak{g}_{2(2)}$ one can find a real form of type I ($\mathfrak{so}(2,5)$) with the same real rank, with  $d$ the same or greater, and with rank  {\it at most half of the rank} of the representation into a Lie  algebra of rank 6 or greater. As the simplest representation of $\mathfrak{g}_{2(2)}$ is $B_{3}$, one should  verify the embeddings of $\mathfrak{g}_{2(2)}$ into Lie algebras of rank $4$ and $5.$ This is because the real rank of $\mathfrak{g}_{2(2)}$ equals 2 and so if $(\mathfrak{g}, \mathfrak{g}_{2(2)}, \mathfrak{l})$ gives a standard compact Clifford-Klein form then necessarily the real rank of $\mathfrak{g}$ is at least 3. If the $\textrm{rank}(\mathfrak{g})=3$ then $\mathfrak{g}$ is split and we can refer to Corollary \ref{cor:split}.
\end{enumerate}

We only have to check the triples $(\mathfrak{g}, \mathfrak{h}, \mathfrak{l})$ such that $\mathfrak{h}$ and $\mathfrak{l}$ are one of the following
\begin{itemize}
	\item absolutely simple non-regular,
	\item simple but not absolutely simple,
\end{itemize}
and $\textrm{rank}_{\mathbb{R}}(\mathfrak{h})=:a>0,$ $\textrm{rank}_{\mathbb{R}}(\mathfrak{l}) =: b>0,$ $\textrm{rank}_{\mathbb{R}}(\mathfrak{g})=a+b.$
\vskip6pt
\centerline{\bf  Final check up of the inequality $d(\mathfrak{h})+d(\mathfrak{l})<d(\mathfrak{g})$}
\vskip6pt
By the observations above we only have to check the inequality for types I,II,III (and $G_{2}$ for low ranks).

By $X_{t}$ we will denote an arbitrary type of rank $t.$ By $(i,j),$ $i,j\in \{ I,II,III  \} $ we denote a pair of subalgebras of type i and j, respectively. We exclude the cases in which $\mathfrak{g}$ is split,  because of \cite{botr}. 

\noindent {\bf Type $A_{2n}$}
\vskip6pt
\noindent {\bf Case $\mathfrak{g}=\mathfrak{su}(a+b,2n+1-a-b)$}
\vskip6pt
We begin with $\mathfrak{h}$ of type $G_{2}.$ First we have to check $\mathfrak{g}_{2(2)}$ in $A_{4}.$ But in this case the real rank of $\mathfrak{g}$ is at most 2. Secondly we have $\mathfrak{g}_{2}(\mathbb{C})$ in $A_{6}.$ So necessarily $\mathfrak{g}=\mathfrak{su}(3,4).$ One easily checks that there is no simple $\mathfrak{l}$ of real rank one with $d(\mathfrak{l})>8.$ 

Clearly, the only proper regular subalgebras in $A_{2n}$ are of type $A_{p},$ $p<2n.$ Note that any non-regular simple Lie subalgebra of $A_{2n}$ is of the rank at most $n,$ by Proposition \ref{prop:inequality}. 

 If $\mathfrak{h}$ is simple but not absolutely simple then it is obviously of type $X_{p}$ with $p\leq n.$ Thus we have to check the Lie subalgebras of type I,II and III for $t=n.$ We have
\begin{itemize}
	\item $d(\mathfrak{g}) = 4an+4bn+2a+2b-2a^{2} - 2b^2 -4ab,$ $r=a+b,$
	\item $d(I) = 2an+a-a^2,$ $r=a,$
	\item $d(II) = 4an-4a^2,$ $r=a,$
	\item $d(III) = 2a^2 +a,$ $r=a.$
\end{itemize}
We also have $a+b\leq n.$

\textbf{(I,I)} $D=4an+4bn+2a+2b-2a^{2} - 2b^2 -4ab - 2an - a +a^2 - 2bn-b + b^2\geq a^2 + b^2 +a +b>0,$

\textbf{(I,II)} $D=4an+4bn+2a+2b-2a^{2} - 2b^2 -4ab- 2an - a +a^2 -4bn + 4b^2 \geq (a-b)^2 +a+2b+ b^2 >0,$

\textbf{(I,III)} $D=4an+4bn+2a+2b-2a^{2} - 2b^2 -4ab- 2an - a +a^2 -2 b^2 -b \geq a^2 +2ab +a+b>0,$

\textbf{(II,II)} $D=4an+4bn+2a+2b-2a^{2} - 2b^2 -4ab - 4an+4 a^2 -4bn +4 b^2 = 2(a-b)^2 +2a+2b>0,$

\textbf{(II,III)} $D=4an+4bn+2a+2b-2a^{2} - 2b^2 -4ab - 4an+4 a^2 -2b^2 -b \geq 2 a^2 +2a +b >0 ,$

\textbf{(III,III)} $D=4an+4bn+2a+2b-2a^{2} - 2b^2 -4ab -2 a^2 - a - 2 b^2 - b \geq 4ab+a+b>0 .$

\noindent {\bf Type $A_{2n+1}$}
\vskip6pt

Again, by Proposition \ref{prop:inequality} we see that if $X_p\subset A_{2n+1}$ is a classical simple non-regular subalgebra or simple but not absolutely simple subalgebra then $p\leq n+1.$

We begin with subalgebras of type $G_{2}. $ 
\begin{itemize}
	\item If $\mathfrak{h}:=\mathfrak{g}_{2(2)} \subset A_{5}$ then (since the real rank of $\mathfrak{g}$ has to be greater than 2 and we omit the split case) necessarily $\mathfrak{g}=\mathfrak{su}(3,3).$ But $\mathfrak{g}_{2(2)}$ is not a subalgebra of $\mathfrak{g} .$ 
	\item If $\mathfrak{h}:=\mathfrak{g}_{2}(\mathbb{C}) \subset A_{7}$ then (since the real rank of $\mathfrak{g}$ has to be greater than 2 and we omit the split case) necessarily $\mathfrak{g}=\mathfrak{su}(3,5),$  $\mathfrak{su}(4,4), $ $ \mathfrak{su}^{\ast}(8).$ But the maximal compact subgroup of $\mathfrak{g}_{2}(\mathbb{C})$ is $\mathfrak{g}_{2} .$ The maximal compact subgroups of possible $\mathfrak{g}$ are $\mathfrak{su}(3)\oplus \mathfrak{u}(5), $ $\mathfrak{su}(4)\oplus \mathfrak{u}(4),$ $\mathfrak{sp}(4) $ and so $\mathfrak{g}_{2(2)}$ is not a subgroup of $\mathfrak{g} .$ 
\end{itemize}

\noindent {\bf Case $\mathfrak{g}=\mathfrak{su}^{\ast}(2n+2)$}
\vskip6pt
In this case we have to check Lie subalgebras of type I,II and III for $t=n+1.$ We have
\begin{itemize}
	\item $d(\mathfrak{g}) = 2 a^2 +2 b^2 +4ab +3a+3b,$ $r=a+b=n$
	\item $d(I) = 2an+3a-a^2,$ $r=a,$
	\item $d(II) = 4an-4a^2+a,$ $r=a,$
	\item $d(III) = 2a^2 +a,$ $r=a.$
\end{itemize}
We also have $a+b=n.$

\textbf{(I,I)} $D= 2 a^2 +2 b^2 +4ab +3a +3b -2a(a+b) + a^2 -3a -2b(a+b) + b^2 -3b = a^2 + b^2 >0,$

\textbf{(I,II)} $D= 2 a^2 +2 b^2 +4ab +3a +3b -2a(a+b) + a^2 -3a -4b(a+b) + 4 b^2 -b = (a-b)^2 + b^2 +2b>0,$

\textbf{(I,III)} $D= 2 a^2 +2 b^2 +4ab +3a +3b -2a(a+b) + a^2 -3a -2b^2 -b = a^2 +2ab +2b >0,$

\textbf{(II,II)} $D= 2 a^2 +2 b^2 +4ab +3a +3b -4a(a+b) + 4 a^2 -a -4b(a+b) + 4 b^2 -b = 2(a-b)^2 +2(a+b) >0,$

\textbf{(II,III)} $D= 2 a^2 +2 b^2 +4ab +3a +3b -4a(a+b) + 4 a^2 -a  -2b^2 -b = 2a^2 +2(a+b) >0,$

\textbf{(III,III)} $D= 2 a^2 +2 b^2 +4ab +3a +3b  -2a^2 -a -2b^2 -b= 4ab +2(a+b)  >0.$

\noindent {\bf Case$\mathfrak{g}=\mathfrak{su}(a+b,2n+2-a-b)$}
\vskip6pt
In this case we have to check Lie subalgebras of type I,II for $t=n+1.$ We can substitute III by III'$:=\mathfrak{sl}(a+1,\mathbb{C}),$ $a\leq n$ .  We have
\begin{itemize}
	\item $d(\mathfrak{g}) = 4an+4bn -2 a^2 - 2 b^2 -4ab +4a+4b,$ 
	\item $d(I) = 2an+3a-a^2,$ $r=a,$
	\item $d(II) = 4an-4a^2+a,$ $r=a,$
	\item $d(III') = a^2 +2a,$ $r=a.$
\end{itemize}
We have $a+b\leq n+1,$ $n\geq 2.$

\textbf{(I,I)} $D=4an+4bn -2 a^2 - 2 b^2 -4ab +4a+4b -2an-3a+a^2  -2bn-3b+b^2 \geq a(a-1) + b(b-1) >0,$ unless $a=b=1.$ But if $a=b=1$ then $D=4(n-1)>0,$

\textbf{(I,II)} $D=4an+4bn -2 a^2 - 2 b^2 -4ab +4a+4b -2an-3a+a^2   -4bn+4 b^2 -b \geq (a-b)(a-b-1) +b^2 +2b >0,$

\textbf{(I,III')} $D=4an+4bn -2 a^2 - 2 b^2 -4ab +4a+4b -2an-3a+a^2 -b^2 -2b \geq (a-1)(a+b) +ab +b(b-1)  >0,$

\textbf{(II,II)} $D=4an+4bn -2 a^2 - 2 b^2 -4ab +4a+4b  -4an+4 a^2 -a  -4bn+4 b^2 -b = 2(a-b)^2 +3(a+b) >0, $

\textbf{(II,III')} $D=4an+4bn -2 a^2 - 2 b^2 -4ab +4a+4b  -4an+4 a^2 -a -b^2 -2b =b(b-2) +2 a^2 +3a  >0, $  

\textbf{(III',III')} $D=4an+4bn -2 a^2 - 2 b^2 -4ab +4a+4b  -a^2 -2a -b^2 -2b \geq (a+b)(a+b-2)+2ab >0.$

\vskip6pt
\noindent {\bf Type $C_{n}$}

In this case we only have to check $\mathfrak{g} = \mathfrak{sp}(a+b,n-a-b) .$

We begin with subalgebras of type $G_{2}. $ We can eliminate (in the case $\mathfrak{h}=\mathfrak{g}_{2(2)}$) $C_{4}$ and $C_{5}$ since the real rank of $\mathfrak{sp}(2,2)$ and $\mathfrak{sp}(2,3)$ equals $2.$ If $\mathfrak{h}:=\mathfrak{g}_{2}(\mathbb{C}) \subset A_{6}$ then (since the real rank of $\mathfrak{g}$ has to be greater than 2 and we omit the split case) necessarily $\mathfrak{g}=\mathfrak{sp}(3,3).$ But now $\mathfrak{l}\subset \mathfrak{sp}(3,3)$ has to have real rank equal to 1 and $d=22.$ One easily verifies that this is impossible. We can use the same argument for $C_{7}$ (in this case necessarily $\mathfrak{g}=\mathfrak{sp}(3,4)$).

Regular proper subalgebras of $C_n$ are given by $A_{p}, C_{p}, D_{s},$ $p+1,s \leq n.$ By Proposition \ref{prop:inequality} to obtain non-regular subalgebra in $C_{n}$ of rank greater than $\frac{n}{2}$ one has to ``use'' the non-regular embedding $B_{p-1}\subset D_{p},$ where $D_{p}$ is regular in $C_{n} .$ By Proposition \ref{prop:inequality}  any non-regular simple subalgebra in $D_{p}$ of rank greater or equal to $\frac{p}{2}$ is of the type $B.$ Thus we have to check the following embeddings:
$$A_{\frac{n}{2}}, B_{n-1}, C_{{\frac{n}{2}}}, D_{{\frac{n}{2}}}\subset C_{n},$$
and so we have
\begin{itemize}
	\item $d(\mathfrak{g}) = 4an+4bn-4 a^2 - 4 b^2 -8ab,$ 
	\item $d(I) = 2an-a-a^2,$ $r=a,$
	\item $d(II) = 2an-4a^2,$ $r=a,$
	\item $d(III) = 2a^2 +a,$ $r=a,$ $a\leq \frac{n}{2}$
\end{itemize}
We have $a+b\leq\frac{n}{2},$ $n\geq 2.$ Notice that we only have to check I and III.

\textbf{(I,I)} $D= 4an+4bn-4 a^2 - 4 b^2 -8ab -  2an+a+a^2 -  2bn+b+b^2 \geq a^2 + b^2 +a+b >0,$

\textbf{(I,III)} $D= 4an+4bn-4 a^2 - 4 b^2 -8ab -  2an+a+a^2 -2b^2 - b \geq a^2 +4ab +a+b(2b-1)>0, $

\textbf{(III,III)} $D= 4an+4bn-4 a^2 - 4 b^2 -8ab -2a^2-a  -2b^2 - b  \geq 2(a^2 + b^2 ) +a(4b-1)+b(4a-1)  >0.$
  
\vskip6pt

\noindent {\bf Type $B_{n}$}
\vskip6pt

In this type we only have to check $\mathfrak{g} = \mathfrak{so}(a+b,2n+1-a-b) .$

We begin with subalgebras of type $G_{2}. $ If $\mathfrak{h}=\mathfrak{g}_{2(2)}$ then $\mathfrak{g}=\mathfrak{so}(3,6),$ $\mathfrak{so}(4,5).$ If $\mathfrak{h}=\mathfrak{g}_{2}(\mathbb{C})$ then $\mathfrak{g}=\mathfrak{so}(s,13-s) , \mathfrak{so}(s,15-s),$ $s=3,4,5,6,7.$ One verifies that there does not exist $\mathfrak{l}$ with appropriate $r,d$, with one exception:
$$(\mathfrak{so}(6,7),\mathfrak{so}(4,7),\mathfrak{g}_{2}(\mathbb{C})),$$
but in this case $\mathfrak{g}$ is split.

Regular proper subalgebras of $B_{n}$ are given by $A_{p}, B_{p}, D_{s},$ $p+1,s \leq n.$ By Proposition \ref{prop:inequality} to obtain non-regular subalgebra in $B_{n}$ of rank greater than $\frac{n}{2}$ one has to consider the chain

 $$\mathfrak{h}^c\subset B_{p-1}\subset D_{p}\subset B_n$$
 where $D_{p}$ is regular in $B_{n}.$ In such case no real form of the pair $(\mathfrak{g}^c,\mathfrak{h}^c)$ can yield a compact standard Clifford-Klein form. Thus we may assume that each subalgebra is of rank at most $\frac{n}{2} $.

In this case we have to check Lie subalgebras of type I,II,III for $t=\frac{n}{2}.$ 
\begin{itemize}
	\item $d(\mathfrak{g}) = 2an+2bn - a^2 -  b^2 -2ab +a+b,$ 
	\item $d(I) = an+a-a^2,$ $r=a,$
	\item $d(II) = 2an-4a^2,$ $r=a,$
	\item $d(III) = 2a^2 +a,$ $r=a,$ $a\leq \frac{n}{2}.$
\end{itemize}
We have $a+b\leq n.$

\textbf{(I,I)} $D= 2an+2bn - a^2 -  b^2 -2ab +a+b - an-a+a^2  - bn-b+b^2\geq a^2 + b^2 >0.$

\textbf{(I,II)} $D= 2an+2bn - a^2 -  b^2 -2ab +a+b - an-a+a^2 -2bn+4b^2 \geq (a-b)^2 +ab + b + 2b^2    >0,$

\textbf{(I,III)} $D= 2an+2bn - a^2 -  b^2 -2ab +a+b - an-a+a^2 -2b^2-b \geq (a-b)^2 +ab   >0,$

\textbf{(II,II)} $D= 2an+2bn - a^2 -  b^2 -2ab +a+b  -2an+4a^2 -2bn+4b^2  \geq (a-b)^2 + 2(a^2 + b^2 ) +a+b   >0,$

\textbf{(II,III)} $D= 2an+2bn - a^2 -  b^2 -2ab +a+b  -2an+4a^2   -2b^2- b \geq (a-b)^2 2a^2 +a+2b    >0,$

\textbf{(III,III)} $D= 2an+2bn - a^2 -  b^2 -2ab +a+b  -2a^2- a  -2b^2- b \geq (a-b)^2 .$ Thus $D>0$ unless $a=b=\frac{n}{2}.$ But then $\mathfrak{g}=\mathfrak{so}(n,n+1),$ which is split.

\vskip6pt
\noindent {\bf Type $D_{n}$}

Notice that if $\mathfrak{h}=\mathfrak{g}_{2}(\mathbb{C})$ then we have to check $D_{6}$ and $D_{7}.$ But if $\mathfrak{g}_{2}(\mathbb{C})$ is contained in the Lie algebra of type $D_{p}$ then necessarily $p\geq 8.$ 

Regular proper subalgebras of $D_{n}$ are given by $A_{p}, D_{p},$ $p \leq n.$  Proposition \ref{prop:inequality} eliminates the possibility $\operatorname{rank}\mathfrak{h}^c>\frac{1}{2} \operatorname{rank}\mathfrak{g}^c$. The opposite inequality is analyzed below.

\vskip6pt
\noindent {\bf Case $\mathfrak{g}=\mathfrak{so}^{\ast} (n)$}
\vskip6pt

We begin with subalgebras of type $G_{2}. $ If $\mathfrak{h}=\mathfrak{g}_{2(2)}$ then we have to check $\mathfrak{g}=\mathfrak{so}^{\ast} (4), \mathfrak{so}^{\ast} (5) .$ In both cases real rank equals 2 (which equals the real rank of $\mathfrak{g}_{2(2)}$). 

By Proposition \ref{prop:inequality} any non-regular simple subalgebra in $B_{n}$ of rank greater or equal to $\frac{p}{2}$ is of the type $B.$ Thus we have to check the following embeddings:
$$A_{\frac{n}{2}}, B_{n-1}, C_{{\frac{n}{2}}}, D_{{\frac{n}{2}}}\subset D_{n},$$
and so we have
\begin{itemize}
	\item $d(\mathfrak{g}) = n^2 -n,$ 
	\item $d(I) = 2an-a-a^2,$ $r=a,$
	\item $d(II) = 2an-4a^2,$ $r=a,$
	\item $d(III) = 2a^2 +a,$ $r=a,$ $a\leq \frac{n}{2} .$
\end{itemize}
We see that one only needs to check I and III. Also $n\geq 4,$ $\frac{n-1}{2}\leq a+b \leq \frac{n}{2}$ and so $a^2 + b^2 \leq \frac{n^2}{4}-2ab.$
\textbf{(I,I)} $D=   n^2 -n - 2an+a+a^2 - 2bn+b+b^2 \geq -n + a^2 +a +b^2 +b>0,$ unless $a=b=1$ and $n=4.$ But in this case we have embedding $B_{3}\subset D_{4},$ which is symmetric and we can apply \cite{tojo}.

\textbf{(I,III)} $D=   n^2 -n - 2an+a+a^2 - 2b^2 -b \geq 2bn-n +a + a^2 -2b^2 - b \geq 2b^2 - 3b +4ab +a^2 -a -1 =b(2b-3) +2a +a(a-1) + 2ab -1  >0,$

\textbf{(III,III)} $D=   n^2 -n - 2a^2-a-2b^2-b \geq \frac{n^2}{2} - n +2ab -a-b \geq \frac{n}{2}(n-3) +2ab >0.$

\vskip6pt
\noindent {\bf Case $\mathfrak{g}=\mathfrak{so} (a+b,2n-a-b)$}

We begin with subalgebras of type $G_{2}. $ If $\mathfrak{h}=\mathfrak{g}_{2(2)}$ then we have to check $\mathfrak{g}=\mathfrak{so}(3,5),$ $\mathfrak{so}(3,7),$ $\mathfrak{so}(4,6) .$ One verifies that there does not exist $\mathfrak{l}$ with appropriate $r,d.$  

We have the non-regular embedding $B_{p-1}\subset D_{p},$ where $D_{p}$ is regular in $D_{n}.$ By Proposition \ref{prop:inequality} we can eliminate this case. Thus we may assume that each subalgebra of $\mathfrak{so} (a+b,2n-a-b)$ is of rank at most $\frac{n}{2} .$ 

We can substitute III by III'$:=\mathfrak{so}(2a,\mathbb{C}), $ $ \mathfrak{so}(2a+1,\mathbb{C}),$ $a\leq \frac{n}{2}$ . We have
\begin{itemize}
	\item $d(\mathfrak{g}) = 2an+2bn - a^2 -  b^2 -2ab ,$ 
	\item $d(I) = an-a-a^2,$ $r=a,$
	\item $d(II) = 2an-4a^2,$ $r=a,$
	\item $d(III') \leq 2a^2 +a,$ $r=a,$ $a\leq \frac{n}{2} .$
\end{itemize}
We have $a+b\leq n.$

\textbf{(I,I)} $D= 2an+2bn - a^2 -  b^2 -2ab - an+a+a^2 - bn+b+b^2 \geq a^2 + b^2 -a-b  >0,$ unless $a=b=1.$ In such case $D=2n-4>0,$

\textbf{(I,II)} $D= 2an+2bn - a^2 -  b^2 -2ab - an+a+a^2  - 2bn-4b^2 \geq (a-b)^2 = 3b^2 +a(b-1) >0,$

\textbf{(I,III')} $D= 2an+2bn - a^2 -  b^2 -2ab - an+a+a^2  -2b^2 -b \geq (a-b)^2 + a(\frac{1}{2}b -1)  + b(\frac{1}{2}a -1)  >0,$ unless $(a,b)=(1,1),$ $(1,2), $ $(2,1),$ $(2,2).$ As $n\geq 4$ thus $D>0$ unless $n=4,$ $a=b=2.$ In such case we have $\mathfrak{g}=\mathfrak{so}(4,4),$  $\mathfrak{h}=\mathfrak{so}(2,3),$  $\mathfrak{l}=\mathfrak{so}(4,\mathbb{C}),$ and so $D=4>0,$

\textbf{(II,II)} $D= 2an+2bn - a^2 -  b^2 -2ab - 2an+4a^2  - 2bn+4b^2 = (a-b)^2 + 2(a^2 + b^2)>0,$

\textbf{(II,III')} $D= 2an+2bn - a^2 -  b^2 -2ab - 2an+4a^2 -2b^2 -b\geq (a-b)(a-b+1) +a^2   >0,$

\textbf{(III',III')} Notice that for the case III we have one of the following
  \begin{enumerate}
	  \item $d=2a^2+a,$ with $2a+1\leq n,$
		\item $d=2a^2-a,$ with $2a\leq n,$
  \end{enumerate}
	For any combination (that is (III.1,III.1), (III.1,III.2), (III.2,III.2)) we have
	$D\geq (a-b)^2 +a+b>0.$
	
\section{Tables}\label{sec:addendum}

\begin{center}
 \begin{table}[h]
 \centering
 {\footnotesize
 \begin{tabular}{| c | c | c | c | }
\hline 
  \multicolumn{4}{|c|}{ \textbf{\textit{Non-compact dimensions}}} \\
		\hline
 $\text{Type}$ & $d(\mathfrak{g})$ & $p\,\text{via}\,l$ & $\text{restr. on}\,p$   \\
\hline
		$A_t$ & $2a(t+1)-a$ & $$ & $1\leq a\leq \frac{t}{2}$ \\
		\hline
		$A_t$ & $2a^2$ & $t=2a-1$ & $a\geq 2$ \\
		\hline
		$B_t$ & $a(2t+1-a)$ & $$ & $1\leq a\leq t$ \\
		\hline
		$C_t$ & $4a(t-a)$ & $$ & $1\leq a\leq \frac{t}{2}$ \\
		\hline
		$D_t$ & $a(2t-a)$ & $ $ & $1\leq a\leq t$ \\
		\hline
		$D_t$ & $2a(2a-1)$ & $t=2a$ & $ $ \\
		\hline
		$D_t$ & $2a(2a+1)$ & $t=2a+1$ & $ $ \\
		\hline
 \end{tabular}
 }
 \caption{
 }
 \label{tttab2}
 \end{table}
\end{center}

\begin{center}
 \begin{table}[h]
 \centering
 {\footnotesize
 \begin{tabular}{| c | c | c| }
   \hline
   \multicolumn{3}{|c|}{ \textbf{\textit{Dimensions of fundamental representations}}} \\
   \hline                        
   $\text{Type}$ & $\text{dimension}$ & $\bar\omega_r$   \\
   \hline
		$A_l$ & $\binom{l+1}{r}$ & $0,r\not=(l+1)/2,(-1)^r, r=(l+1)/2$ \\
		\hline
		$B_l$ & $\binom{2l+1}{r}$ & $1, r\not=l, (-1)^{l(l+1)/2}, r=l$ \\
		\hline
	           $B_l$ & $2^l$ & $ (-1)^{l(l+1)/2}$  \\
		\hline
		$C_l$ & $\binom{2l}{r}-\binom{2l}{r-2}$ & $ (-1)^r$ \\
		\hline
		$D_l$ & $ \binom{2l}{r}$ & $ 1, 1\leq r\leq l-2 $ \\
		\hline
		$D_l$ & $2^{l-1}$   & $0,r=l-1,\,l\,\text{odd}, (-1)^{l/2}, l\,\text{even} $ \\
		\hline
		$D_l$ & $2^{l-1}$ & $ 0,\,r=l,\,l\,\text{odd}, (-1)^{l/2},l\,\text{even}$ \\
		\hline
 \end{tabular}
 }
 \caption{
 }
 \label{tttab1}
 \end{table}
\end{center}

\begin{center}
 \begin{table}[h]
 \centering
 {\footnotesize
 \begin{tabular}{| c | c | c| }
   \hline
   \multicolumn{3}{|c|}{ \textbf{\textit{Maximal subalgebras of maximal rank in $\mathfrak{g}^c$}}} \\
   \hline                        
   $\mathfrak{g}^c$ & $\mathfrak{\tilde{h}}^c$ & $\operatorname{rank}\mathfrak{\tilde{h}}^c$   \\
   \hline
		$\mathfrak{so}(2l+1,\mathbb{C})$ & $\mathfrak{so}(2k,\mathbb{C})\oplus\mathfrak{so}(2(l-k)+1,\mathbb{C})$ & $l$ \\
		\hline
		$\mathfrak{sp}(2l,\mathbb{C})$ & $\mathfrak{sp}(2k,\mathbb{C}\oplus\mathfrak{sp}(2(l-k)+1,\mathbb{C})$ & $l$ \\
		\hline
	           $\mathfrak{so}(2l,\mathbb{C})$ & $\mathfrak{so}(2k,\mathbb{C})\oplus\mathfrak{so}(2(l-k)+1),\mathbb{C})$ & $ l$  \\
		\hline
		$\mathfrak{sl}(n,\mathbb{C})$ & $\mathfrak{so}(2,\mathbb{C})\oplus\mathfrak{so}(2l-1,\mathbb{C})$ & $ l-1$ \\
		\hline
		$\mathfrak{so}(2l+2,\mathbb{C})$ & $\mathfrak{so}(2,\mathbb{C})\oplus\mathfrak{so}(2l-1),\mathbb{C})$ & $ l-1 $ \\
		\hline
		$\mathfrak{sp}(2l,\mathbb{C})$ & $\mathfrak{gl}(l,\mathbb{C})$   & $l-1$ \\
		\hline
		$\mathfrak{so}(2l,\mathbb{C})$ & $\mathfrak{so}(2,\mathbb{C})\oplus\mathfrak{so}(2l-2,\mathbb{C})$ & $ l-1$ \\
		\hline
                     $\mathfrak{so}(2l,\mathbb{C})$ & $\mathfrak{gl}(l,\mathbb{C})$ & $ l-1$ \\
                     \hline
 \end{tabular}
 }
 \caption{
 }
 \label{tab-max}
 \end{table}
\end{center}

\newpage



\begin{thebibliography}{99}

\bibitem{abis} H. Azad, I. Biswas, {\it On the conjugacy of maximal unipotent subgroups of real semisimple Lie groups,} J.  Lie Theory 22 (2012), 883-886.
\bibitem{ben} Y. Benoist, {\it Actions propers sur les espaces homogenes reductifs}, Ann. Math. 144(1996), 315-347.
\bibitem{bela} Y. Benoist, F. Labourie, {\it Sur les espaces homog\`enes mod\`eles de vari\'et\'es compactes,} I. H. E. S. Publ. Math. 76 (1992), 99-109.
\bibitem{bt} M. Boche\'nski, A. Tralle, {\it On locally homogeneous pseudo-Riemannian compact Einstein manifolds}, J. Geom. Phys. 155(2020), art. no. 103778.
\bibitem{bt1} M. Boche\'nski, A. Tralle,  {\it On solvable compact Clifford-Klein forms},  Proc.  Amer. Math.  Soc., 145(2017), 1819-1832 .
\bibitem{bjt} M. Boche\'nski, P. Jastrz\c ebski, A. Tralle, {\it Nonexistence of standard compact Clifford-Klein forms of homogeneous spaces of exceptional Lie groups,} Math. Comp. 89 (2020), 1487-1499.
\bibitem{bjst} M. Boche\'nski, P. Jastrz\c ebski, A. Szczepkowska, A. Tralle, A. Woike, {\it Semisimple subalgebras in simple Lie algebras and a computational approach to the compact Clifford-Klein problem}, Exp. Math. 30(2021), 86-94.
\bibitem{botr} M. Boche\'nski, A. Tralle, {\it Standard compact Clifford-Klein forms and Lie algebra decomposition}, Transformation Groups, doi.org/10.1007/s00031-024-09900-0
 \bibitem{bym} M. Boche\'nski, Y. Morita, {\it Exotic proper actions on homogeneous spaces via convex cocompact representations,}, arXiv: 2501.14274v1.
\bibitem{bou} N. Bourbaki, {\it Groupes et Algebres de Lie, Chap. VII et VIII}, Hermann, Paris, 1975
\bibitem{bouu} N. Bourbaki, {\it Groupes et Algebres de Lie, Chap. IV-VI}, Hermann, Paris, 1968.

\bibitem{col} D. H. Collingwood, W. M. McGovern, {\it Nilpotent Orbits in Semisimple Lie Algebras}, Van Nostrand Reinhold, New York (1993).

\bibitem{d} E. B. Dynkin, {\it Maximal subgroups of the classical groups}, Tr. Mosk. Mat. Obs. 1(1952), 39-166 (in Russian), English translation in: Selected Papers of E. B. Dynkin, AMS, Internat. Press, Cambridge Mass., 2000, pp. 37-150.
\bibitem{d1} E. B. Dynkin, {\it Semisimple subalgebras of semisimple Lie algebras}, Mat. Sb. N.S. 30 (72) (1952), 349-462.
\bibitem{he} S. Helgason, {\it Differential geometry, Lie groups and symmetric spaces,} Acad. Press, 1978.
\bibitem{knapp} A. W. Knapp {\it Representation Theory of Semisimple Lie Groups,} Princeton University Press, New Jersey, 1986.
\bibitem{knapp1} A. W. Knapp {\it Lie Groups Beyond the Introduction}, Birkh\"auser, Basel, 2002
\bibitem{kassel} F. Kassel, {\it Deformation of proper actions on reductive homogeneous spaces}, Math. Ann. 353(2012), 599-632. 
\bibitem{k-k} F. Kassel, T. Kobayashi, {\it Poincar\'e series for non-Riemannian locally symmetric spaces}, Adv. Math. 287 (2016), 123-236.
\bibitem{k-k1} F. Kassel, T. Kobayashi, {\it Spectral analysis on pseudo-Riemannian locally symmetric spaces}, Proc. Japan Acad. ser. A, 96 (2020), 1-6.
\bibitem{kob-d} T. Kobayashi, {\it A necessary condition for the existence of compact Clifford-Klein forms of homogeneous spaces of reductive type}, Duke Math. J.  67 (1992), 644-653.
\bibitem{kobcrit} T. Kobayashi {\it Criterion for proper actions on homogeneous spaces of reductive groups,} J. Lie Theory 6 (1996), 147-163.
\bibitem{kob1} T. Kobayashi, {\it Discontinuous groups and Clifford-Klein forms of pseudo-Riemannian homogeneous manifolds}, Algebraic and Analytic Methdos in Representation Theory, Perspect. Math, 17, 99-165,  Acad. Press, 1997.
\bibitem{kob} T. Kobayashi {\it Proper action on a homogeneous space of reductive type,} Math. Ann. 285 (1989), 249-263.
\bibitem{kob-y} T. Kobayashi, T. Yoshino, {\it Compact Clifford-Klein forms of symmetric spaces}, Pure Appl. Math. Quart.  1 (2005), 591-663.
\bibitem{kob-lnm} T. Kobayashi, {\it Conjectures on reductive homogeneous spaces}, in: J.-M. Morel, B. Teisiier (eds.), Mathematics Going Forward, Lect. Notes Math. 2313, 217-234.
\bibitem{kob-int} T. Kobayashi, {\it Intrinsic sound of anti-de Sitter manifolds}, Springer Proc. Math. Stat. 191(2016), 83-99 
\bibitem{MOl} S. Mehdi and M. Olbrich, {\it Spectrum of semisimple locally symmetric spaces and admissibility of spherical representations}, in: Lie Groups, number theory and vertex algebras, Contemp. Math. vol. 768, pp. 55-63.
\bibitem{MST} D. Monclaire, J.-M. Schenker, N. Tholozan, {\it Gromov-Thurston manifolds and anti-de-Sitter geometry}, arXiv: 2310.12003
\bibitem{Mo} Y. Morita, {\it A topological necessary condition for the existence of compact Clifford-Klein forms}, J. Diff. Geom. 100 (2015), 533-545.
\bibitem{M1} Y. Morita, {\it A cohomological obstruction to the existence of compact Clifford-Klein forms}, Selecta Math. (N.S.) 23 (2017), 1931-1953.
\bibitem{on}  A. L. Onishchik, {\it Decompositions of reductive Lie groups,} Mat. Sb. (N.S.), 80 (1969), 553-599; Math. USSR-Sb., 9:4 (1969), 515-554.
\bibitem{ov} A. L. Onishchik, E. B. Vinberg, {\it Lie Groups and Lie Algebras III}, Springer, Berlin, 2002.
\bibitem{ov1} A. L. Onishchik, E. B. Vinberg, {\it Lie Groups and Algebraic Groups}, Springer, Berlin, 1990.
\bibitem{tojo} K. Tojo {\it Classification of irreducible symmetric spaces which admit standard compact Clifford-Klein forms,} Proc. Japan Acad. 95, Ser. A (2019),
 11-15.
\bibitem{T} N. Tholozan, {\it Volume and non-existence of compact Clifford-Klein forms}, arXiv: 1511.09448.

\end{thebibliography}
\end{document}